\newtheorem{thm}{Theorem}[section]
\newtheorem{cor}[thm]{Corollary}
\newtheorem{lem}[thm]{Lemma}
\newtheorem{prop}[thm]{Proposition}
\newtheorem{defn}[thm]{Definition}
\newtheorem{rem}[thm]{Remark}
\newtheorem{ex}[thm]{Example}
\def\tr{\mbox{tr}\,}
\newcommand{\ww}[1]{\rho^{[2]}_{#1}}
\newcommand{\dd}[1]{\tau^{[2]}_{#1}}
\newcommand{\w}[1]{\rho^{[1]}_{#1}}
\long\def\comment#1{{}}
\title{On the order of indeterminate moment problems}
\author{Christian Berg and Ryszard Szwarc \thanks{The first author acknowledges
     support by grant 10-083122 from The Danish 
    Council for Independent Research $|$ Natural Sciences}}
\begin{document}
\maketitle

\begin{abstract}
For an indeterminate moment problem we denote the orthonormal
polynomials by $P_n$. We study the relation between the growth of the
function $P(z)=(\sum_{n=0}^\infty|P_n(z)|^2)^{1/2}$ and summability properties
of the sequence $(P_n(z))$. Under certain assumptions on the
recurrence coefficients  
 from the  three term recurrence relation
$ zP_n(z)=b_nP_{n+1}(z)+a_nP_n(z)+b_{n-1}P_{n-1}(z)$,
we show that the function $P$ is of order
$\alpha$ with $0<\alpha<1$, if and only if the sequence $(P_n(z))$ is absolutely summable to any power greater than $2\alpha$. Furthermore, the  
order $\alpha$ is equal to the exponent of
convergence of the sequence $(b_n)$.
 Similar results are obtained for logarithmic order and for more general types of slow growth. To prove these results we introduce a concept of an order function and its dual. 

We also relate the order of $P$ with the order of certain  entire functions defined in terms of the moments or the leading coefficient of $P_n$. 
\end{abstract}

\noindent 
2000 {\em Mathematics Subject Classification}:\\
Primary 44A60; Secondary 30D15  

\noindent
Keywords: indeterminate moment problems, order of entire functions.

\section{Introduction and results}\label{sec-intro}

Stieltjes discovered the indeterminate moment problem in the memoir \cite{St} from 1894, and one can follow his discoveries in the correspondence
 with Hermite, cf. \cite{Be95}. Stieltjes only considered distribution functions on the half-line $[0,\infty)$ corresponding to what
 is now called the Stieltjes moment problem. It took about 25 years before
Hamburger, Nevanlinna and Marcel Riesz laid the foundation of the Hamburger moment problem described by \eqref{eq:Ha}. Nevanlinna proved 
the Nevanlinna parametrization
 of the full set of solutions to the Hamburger moment problem. Using the  four entire functions $A,B,C,D$,
obtained from \eqref{eq:ABCD} by letting $n\to\infty$, any solution to the moment problem can be described  via a universal parameter space, namely
the one-point compactification of the space of Pick functions. Nevanlinna  also pointed out what is now called the Nevanlinna extremal solutions
 corresponding to the degenerate Pick functions, which are a real constant
 or infinity. Since the same solutions appear in spectral theory for  self-adjoint extensions of Jacobi-matrices, Simon \cite{Si} proposed to call
 them von-Neumann solutions. The classical monographs describing the Nevanlinna parametrization are \cite{Ak},\cite{S:T},\cite{Sto}.
 None of these treatises contain
a fully calculated example with concrete functions $A,B,C,D$. Although it was well known that the zeros of $B,D$ interlace and similarly with $A,C$,
 nobody seem to have noticed that  these functions have the same growth properties before it was done in \cite{B:P}. In that paper it was proved
  that the four entire functions $A,B,C,D$ as well as $P,Q$ from Theorem~\ref{thm:indet} have the same order and type
called the {\it order $\rho$ and type $\tau$ of the indeterminate moment problem}.
Long before, Marcel Riesz had proved the deep result that  $A,B,C,D$ are of minimal exponential type,
i.e., that $0\le\rho\le 1$ and if $\rho=1$, then $\tau=0$, cf. \cite[p. 56]{Ak}. 

 I a series of papers in the beginning of the 1990'ies, Ismail-Masson \cite{I:M}, Chihara-Ismail \cite{Chi:I}, Berg-Valent \cite{B:V} calculated a number of examples. One source of indeterminate moment problems is $q$-series, cf. \cite{G:R}, and formulas of Ramanujan, see \cite{Askey}.
 The indeterminate  
moment problems within the $q$-Askey scheme were identified  by Christiansen in \cite{Chr}. All these moment problems     
have order zero, and in  Ismail \cite{Ism} it was conjectured that  $A,B,C,D$ should have the
same growth properties on a more refined scala than ordinary order. This was proved in 
\cite{B:P:H}, by the introduction of a refined scale called logarithmic order and type, so we can speak about 
logarithmic order $\rho^{[1]}$ and logarithmic type
$\tau^{[1]}$ of a moment problem of order zero. In \cite{Ped} it was proved that if $(\rho,\tau)$ or $(\w{},\tau^{[1]})$ are prescribed, then there exist  
indeterminate moment problems with these (logarithmic) orders and
types. In Ramis \cite{Ra} the notion of logarithmic order and type
appears for entire solutions to $q$-difference equations.

The main achievement of the present paper is that we present some conditions on the coefficients $(a_n),(b_n)$ of the three term recurrence relation
\eqref{eq:3t}, such that when these hold, then summability properties of the sequence $(P_n^2(z))$ and order properties of the moment problem are
 equivalent. Furthermore, the order as well as the logarithmic order of the moment problem can be calculated from the growth properties of the sequence
$(b_n)$.

 These conditions are of two different types. There is a {\it regularity condition} that $(b_n)$ is either log-convex eventually
 or log-concave eventually, cf.  \eqref{eq:logconvex} or \eqref{eq:logconcave}, and a {\it growth condition} \eqref{eq:finalber}.

The last condition is also necessary in the symmetric case $a_n=0$ because of Carleman's condition.

We shall now give a more detailed introduction to the content.

 Consider a normalized Hamburger moment sequence $(s_n)$ given as
\begin{equation}\label{eq:Ha}
s_n=\int_{-\infty}^\infty x^n\,d\mu(x),\quad n\ge 0,
\end{equation}
where $\mu$ is a probability measure with infinite support and moments
of any order.

 Denote the corresponding orthonormal
polynomials by $P_n(z)$ and those of the second kind by $Q_n(z)$,
following the notation and terminology of \cite{Ak}.
These polynomials satisfy a three term recurrence relation of the form
\begin{equation}\label{eq:3t}
z r_n(z)=b_nr_{n+1}(z)+a_nr_n(z)+b_{n-1}r_{n-1}(z),\quad n\ge 0,
\end{equation}
where $a_n\in\mathbb R,b_n>0$ for $n\ge 0$ and $b_{-1}=1$, and
 with the initial conditions
$P_0(z)=1,\ P_{-1}(z)=0$ and $Q_0(z)=0,\ Q_{-1}(z)=-1$.

The following polynomials will be used, cf. \cite[p.14]{Ak}
\begin{equation}\label{eq:ABCD}
\begin{array}{llll}
 A_n(z)=z\sum_{k=0}^{n-1} Q_k(0)Q_k(z),\\
B_n(z)=-1+z\sum_{k=0}^{n-1} Q_k(0)P_k(z),\\
C_n(z)=1+z\sum_{k=0}^{n-1} P_k(0)Q_k(z),\\
D_n(z)=z\sum_{k=0}^{n-1} P_k(0)P_k(z).
\end{array}
\end{equation}

We need the coefficients of the orthonormal polynomials 
\begin{equation}\label{eq:OP1}
P_n(x)=\sum_{k=0}^nb_{k,n}x^k,
\end{equation}
and by \eqref{eq:3t} we have
\begin{equation}\label{eq:lead}
b_{n,n}=1/(b_0b_1\cdots b_{n-1})>0.
\end{equation}

The indeterminate case is characterized by the equivalent conditions
in the following result, cf. \cite[Section 1.3]{Ak}.

\begin{thm}\label{thm:indet} For $(s_n)$ as in \eqref{eq:Ha} the
  following conditions are equivalent:
\begin{enumerate}
\item[(i)]$\sum_{n=0}^\infty \left(P_n^2(0)+Q_n^2(0)\right)<\infty,$
\item[(ii)] $ P(z)=\left(\sum_{n=0}^\infty |P_n(z)|^2\right)^{1/2}<\infty,\quad z\in\mathbb C.$
\end{enumerate}
If (i) and (ii) hold (the indeterminate case), then 
$Q(z)=\left(\sum_{n=0}^\infty |Q_n(z)|^2\right)^{1/2}<\infty$ for $z\in\mathbb C$,
 and $P,Q$ are continuous functions.
\end{thm}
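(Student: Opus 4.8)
The plan is to prove Theorem~\ref{thm:indet} by exploiting the symmetry between the pairs $(P_n)$ and $(Q_n)$ together with the fundamental identity coming from the recurrence relation \eqref{eq:3t}. First I would establish the constant-coefficient Wronskian-type identity
\begin{equation}\label{eq:wronsk}
b_n\left(P_{n+1}(z)Q_n(w)-P_n(w)Q_{n+1}(z)\right)-b_{n-1}\left(P_n(z)Q_{n-1}(w)-P_{n-1}(w)Q_n(z)\right)=(z-w)\sum_{k=0}^n P_k(z)Q_k(w),
\end{equation}
obtained by the standard telescoping argument: multiply the recurrence for $P_k(z)$ by $Q_k(w)$, subtract the recurrence for $Q_k(w)$ multiplied by $P_k(z)$, and sum over $k$ from $0$ to $n$, using the initial conditions $P_0=1$, $P_{-1}=0$, $Q_0=0$, $Q_{-1}=-1$. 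Specializing $w=z$ gives the telescoping relation $b_n(P_{n+1}(z)Q_n(z)-P_n(z)Q_{n+1}(z))=1$ for all $n$ (the Christoffel–Darboux / constancy of the Wronskian), which will be the engine of the estimates.

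Next I would prove the implication (i) $\Rightarrow$ (ii). The idea is that control of $P_n(0)$ and $Q_n(0)$ propagates to control of $P_n(z)$ and $Q_n(z)$ for arbitrary $z$. Concretely, from the recurrence one derives an integral-equation-type representation expressing $(P_n(z),Q_n(z))$ in terms of $(P_k(0),Q_k(0))$ for $k\le n$ with a kernel that is polynomial in $z$; using the Wronskian identity \eqref{eq:wronsk} with $w=0$ one obtains
\begin{equation}\label{eq:perturb}
P_n(z)=P_n(0)+z\sum_{k=0}^{n-1}\left(Q_n(0)P_k(0)-P_n(0)Q_k(0)\right)P_k(z),
\end{equation}
and similarly for $Q_n(z)$. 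Writing $\sigma_n=\left(\sum_{k=0}^n(P_k^2(z)+Q_k^2(z))\right)^{1/2}$ and applying the Cauchy–Schwarz inequality to the sum on the right of \eqref{eq:perturb}, one gets a recursive inequality of the form $\sigma_n\le \sigma_n(0)+C|z|\,\sigma_n(0)\,\sigma_{n-1}$, where $\sigma_n(0)$ is the corresponding partial sum at $0$, which is bounded by hypothesis (i). A Gronwall-type iteration then shows $\sigma_n$ stays bounded, so the series in (ii) converges; the same argument gives convergence of $\sum|Q_n(z)|^2$. Local uniformity of these estimates in $z$ yields continuity of $P$ and $Q$, since each is a locally uniform limit (in fact, the tails are locally uniformly small) of continuous functions.

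The reverse implication (ii) $\Rightarrow$ (i) is immediate on its face: (ii) at $z=0$ gives $\sum P_n^2(0)<\infty$, and then the Wronskian relation $b_n(P_{n+1}(0)Q_n(0)-P_n(0)Q_{n+1}(0))=1$ together with a summation-by-parts argument — or, more cleanly, the parallel perturbation formula for $Q_n$ evaluated using boundedness of $\sum P_k^2$ — forces $\sum Q_n^2(0)<\infty$ as well; alternatively one invokes the symmetric roles of the two moment problems. I expect the main obstacle to be making the Gronwall iteration in (i) $\Rightarrow$ (ii) genuinely rigorous and uniform on compact sets: one must be careful that the ``constant'' in $\sigma_n\le\sigma_n(0)+C|z|\sigma_n(0)\sigma_{n-1}$ does not itself grow with $n$, which is where the boundedness (not just summability) of the partial sums $\sigma_n(0)$ enters, and one must track the dependence on $|z|$ to extract local uniformity and hence continuity. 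Everything else is bookkeeping with the recurrence relation.
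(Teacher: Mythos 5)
The paper does not actually prove this theorem; it cites \cite[Section 1.3]{Ak}, and Remark~\ref{thm:remaboutalpha} points to Akhiezer's formula [1.23a] — which is precisely your perturbation formula, i.e.\ \eqref{eq11} expanded by means of \eqref{eq:ABCD}. So for (i)$\Rightarrow$(ii) your strategy is the classical one, but its execution has a flaw: the recursion $\sigma_n\le\sigma_n(0)+C|z|\,\sigma_n(0)\,\sigma_{n-1}$ cannot give boundedness, since the coefficient $C|z|\sigma_n(0)$ does not become small; once $C|z|\sigma_n(0)>1$, iteration only yields $\sigma_n\lesssim (C|z|\sqrt S)^n$ with $S=\sum(P_k^2(0)+Q_k^2(0))$. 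Your own diagnosis (that boundedness of $\sigma_n(0)$ is what saves the day) is exactly backwards: what saves the argument is summability kept at the index $n$. Estimate the single term, $|P_n(z)|\le c_n\bigl(1+|z|\sqrt S\,\sigma_{n-1}\bigr)$ with $c_n=(P_n^2(0)+Q_n^2(0))^{1/2}$, then square and sum to get $\sigma_n^2\le(1+2|z|^2S c_n^2)\sigma_{n-1}^2+2c_n^2$; this discrete Gronwall closes because $\prod_n(1+2|z|^2Sc_n^2)$ converges, and it is the additive counterpart of the multiplicative transfer-matrix bound \eqref{simon} that the paper uses for the $\ell^\alpha$ refinement in Theorem~\ref{thm:order1}. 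Two smaller repairs: your displayed Christoffel--Darboux-type identity is garbled (the one-step difference equals the single term $(z-w)P_n(z)Q_n(w)$, the telescoped sum carries a boundary constant, and the sign is opposite to \eqref{eq:LO}), and the perturbation formula does not follow from it ``with $w=0$''; it is Akhiezer's variation-of-constants identity and needs its own (easy, inductive) verification from the recurrence.

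The genuine gap is (ii)$\Rightarrow$(i), which is not ``immediate on its face''. From $\sum P_n^2(0)<\infty$ and the Wronskian at $0$ one cannot force $\sum Q_n^2(0)<\infty$: any determinate problem whose unique measure $\mu$ has an atom at $0$ satisfies $\sum P_n^2(0)=1/\mu(\{0\})<\infty$ while $\sum Q_n^2(0)=\infty$, and \eqref{eq:LO} of course still holds; the ``parallel perturbation formula for $Q_n$'' is circular, since it expresses $Q_n(z)$ through the unknown quantities $Q_k(z)$, and appealing to the ``symmetric roles of the two problems'' (the $Q_n$ are, up to the factor $b_0$, the orthonormal polynomials of the shifted problem) merely restates what has to be proved. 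The standard way to close this direction — and the content of the reference the paper gives instead of a proof — is to use hypothesis (ii) at a non-real point: $\sum|P_n(i)|^2<\infty$ forces the limit-circle case, because at non-real $z$ there always exists a nontrivial $\ell^2$ solution of \eqref{eq:3t} (Weyl's nested-disc / deficiency-index argument), and $P_\cdot(i)$ is linearly independent of it by constancy of the Wronskian, so all solutions at $i$, in particular $Q_\cdot(i)$, are square-summable; the invariability theorem (your (i)$\Rightarrow$(ii) machinery run from the point $i$ rather than $0$) then transports square-summability of both families to every $z$, giving (i), the finiteness of $Q(z)$, and — from the locally uniform bounds — the continuity of $P$ and $Q$. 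Without some such input beyond recurrence identities at $z=0$, the implication fails, as the atom-at-zero example shows.
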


 Concerning order and type as well as logarithmic order and type of  an (entire) function, we refer to
 Section~\ref{sec-pre}, but we warn the reader that the logarithmic order treated in this paper differs from the logarithmic order of \cite{B:P:H} by 
 subtracting 1.

Our first main result extends Theorem~\ref{thm:indet}.
For $0<\alpha$ we consider the complex linear sequence  space 
$$
\ell^{\alpha}=\{(x_n)|\sum_{n=0}^\infty |x_n|^\alpha<\infty\}.
$$

\begin{thm}\label{thm:order1intro} For a moment problem and  $0<\alpha \le 1$ the following
  conditions are equivalent:
\begin{enumerate}
\item[(i)] $(P_n^2(0)),(Q_n^2(0))\in\ell^{\alpha}$,
\item[(ii)] $(P_n^2(z)),(Q_n^2(z))\in\ell^{\alpha}$ for all
  $z\in\mathbb C$.
\end{enumerate}
If the conditions are satisfied, the moment problem is indeterminate and the two series indicated in (ii) converge
uniformly on compact subsets of $\mathbb C$. Furthermore, $(1/b_n)\in\ell^{\alpha}$ and
\begin{equation}\label{eq:Porderintro}
P(z)\le C\exp(K|z|^\alpha),
\end{equation}
where
\begin{equation}\label{eq:CKintro}
C=\left(\sum_{n=0}^\infty(P_n^2(0)+Q_n^2(0))\right)^{1/2},\quad K=\frac{1}{\alpha}\sum_{n=0}^\infty(|P_n(0)|^{2\alpha}+|Q_n(0)|^{2\alpha}).
\end{equation}

In particular the moment problem has order $\rho\le\alpha$, and if the
order is $\alpha$, then the type $\tau\le K$.
\end{thm}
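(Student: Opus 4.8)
The implication (ii)$\Rightarrow$(i) is immediate (set $z=0$), so the task is (i)$\Rightarrow$(ii) together with the quantitative assertions. First I would observe that (i) already forces indeterminacy: since $0<2\alpha\le2$, convergence of $\sum_n|P_n(0)|^{2\alpha}$ gives $|P_n(0)|\to0$, hence $P_n^2(0)\le|P_n(0)|^{2\alpha}$ eventually, so $\sum_n P_n^2(0)<\infty$, and likewise $\sum_n Q_n^2(0)<\infty$; Theorem~\ref{thm:indet} then yields indeterminacy and the finiteness and continuity of $P$ and $Q$. Writing $\sigma_n:=P_n^2(0)+Q_n^2(0)$, the constant $C$ of \eqref{eq:CKintro} is $\bigl(\sum_n\sigma_n\bigr)^{1/2}<\infty$, and by subadditivity of $t\mapsto t^\alpha$ on $[0,\infty)$ for $0<\alpha\le1$ one has $\sum_n\sigma_n^\alpha\le\sum_n\bigl(|P_n(0)|^{2\alpha}+|Q_n(0)|^{2\alpha}\bigr)=\alpha K<\infty$.

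The engine of the proof is a Volterra-type representation of $(P_n(z))$ and $(Q_n(z))$ through their values at $0$. The Christoffel--Darboux formula, applied to $\{P_k\}$ and to the mixed pairs, gives
\begin{align*}
 z\sum_{k=0}^{n-1}P_k(0)P_k(z)&=b_{n-1}\bigl(P_n(z)P_{n-1}(0)-P_{n-1}(z)P_n(0)\bigr),\\
 -1+z\sum_{k=0}^{n-1}Q_k(0)P_k(z)&=b_{n-1}\bigl(P_n(z)Q_{n-1}(0)-P_{n-1}(z)Q_n(0)\bigr),
\end{align*}
that is, expressions for $D_n(z)$ and $B_n(z)$. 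Multiplying the first by $Q_n(0)$, the second by $P_n(0)$, subtracting, and using the Wronskian relation $b_{n-1}\bigl(P_{n-1}(0)Q_n(0)-P_n(0)Q_{n-1}(0)\bigr)=1$, the $P_{n-1}(z)$-terms cancel, giving
\[
 P_n(z)=P_n(0)+z\sum_{k=0}^{n-1}\gamma_{n,k}P_k(z),\qquad \gamma_{n,k}:=Q_n(0)P_k(0)-P_n(0)Q_k(0),
\]
and the analogous computation with $C_n(z),A_n(z)$ in place of $D_n(z),B_n(z)$ gives the \emph{same} recursion for $(Q_n(z))$, with the \emph{same} coefficients $\gamma_{n,k}$. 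Putting $W_n(z):=\bigl(P_n(z),Q_n(z)\bigr)\in\mathbb C^2$, the two identities merge into the single lower-triangular Volterra system $W_n(z)=W_n(0)+z\sum_{k=0}^{n-1}\gamma_{n,k}W_k(z)$, which for each $n$ is a finite linear system solved by forward substitution.

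Solving it produces the (finite) Neumann expansion $W_n(z)=\sum_{m\ge0}z^m\sum_{n>k_1>\dots>k_m}\gamma_{n,k_1}\gamma_{k_1,k_2}\cdots\gamma_{k_{m-1},k_m}W_{k_m}(0)$. The crucial point is that, writing $\gamma_{n,k}=\xi_n\cdot\eta_k$ with $\eta_k=(P_k(0),Q_k(0))=W_k(0)$ and $\xi_n=(Q_n(0),-P_n(0))$ (so $\|\xi_n\|=\|\eta_n\|=\sigma_n^{1/2}$), the Cauchy--Schwarz bound $|\gamma_{n,k}|\le\sigma_n^{1/2}\sigma_k^{1/2}$ makes the square roots telescope along a chain: the $m$-th term is bounded in norm by $|z|^m\sigma_n^{1/2}\sum_{n>k_1>\dots>k_m}\sigma_{k_1}\cdots\sigma_{k_m}=|z|^m\sigma_n^{1/2}e_m(\sigma_0,\dots,\sigma_{n-1})$, with $e_m$ the elementary symmetric polynomial. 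Summing over $m$,
\[
 \|W_n(z)\|\le\sigma_n^{1/2}\sum_{m\ge0}|z|^m e_m(\sigma_0,\dots,\sigma_{n-1})=\sigma_n^{1/2}\prod_{k=0}^{n-1}(1+|z|\sigma_k)\le\sigma_n^{1/2}\prod_{k=0}^{\infty}(1+|z|\sigma_k),
\]
the last product being finite as $\sum_k\sigma_k<\infty$. Squaring and summing over $n$ gives $P(z)^2+Q(z)^2=\sum_n\|W_n(z)\|^2\le C^2\prod_k(1+|z|\sigma_k)^2$, and the elementary inequality $\log(1+t)\le t^\alpha/\alpha$ ($t\ge0$, $0<\alpha\le1$) together with $\sigma_k^\alpha\le|P_k(0)|^{2\alpha}+|Q_k(0)|^{2\alpha}$ upgrades this to $P(z)\le\bigl(P(z)^2+Q(z)^2\bigr)^{1/2}\le C\exp(K|z|^\alpha)$, with exactly the $C$ and $K$ of \eqref{eq:CKintro}. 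I expect the main (though essentially routine) difficulty to be precisely this bookkeeping: arranging the chain estimate so the exponents of the $\sigma$'s collapse into elementary symmetric polynomials, and handling the product/logarithm inequality so no extraneous constants appear.

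The remaining claims are corollaries of $\|W_n(z)\|^2\le\sigma_n\prod_k(1+|z|\sigma_k)^2$. Running the same argument with $\sigma_n$ replaced by $\sigma_n^\alpha$ bounds $|P_n(z)|^{2\alpha}$ and $|Q_n(z)|^{2\alpha}$ by $\sigma_n^\alpha\prod_k(1+|z|\sigma_k)^{2\alpha}$, which on each disc $|z|\le R$ is a summable $z$-independent majorant (since $\sum_n\sigma_n^\alpha<\infty$); the Weierstrass $M$-test then gives both the membership $(P_n^2(z)),(Q_n^2(z))\in\ell^\alpha$ of (ii) and the uniform convergence on compacta of the two series in (ii) (and, with $\sigma_n$ in place of $\sigma_n^\alpha$, of $\sum_n|P_n(z)|^2$ and $\sum_n|Q_n(z)|^2$). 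For $(1/b_n)\in\ell^\alpha$, the Wronskian relation at $z=0$ gives $1/b_{n-1}=P_{n-1}(0)Q_n(0)-P_n(0)Q_{n-1}(0)$, hence $(1/b_{n-1})^\alpha\le|P_{n-1}(0)|^\alpha|Q_n(0)|^\alpha+|P_n(0)|^\alpha|Q_{n-1}(0)|^\alpha$, and summing in $n$ and applying Cauchy--Schwarz bounds $\sum_n(1/b_n)^\alpha$ by $2\bigl(\sum_m|P_m(0)|^{2\alpha}\bigr)^{1/2}\bigl(\sum_k|Q_k(0)|^{2\alpha}\bigr)^{1/2}<\infty$. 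Finally, $P(z)\le C\exp(K|z|^\alpha)$ says exactly that $P$ has order $\le\alpha$, with type $\le K$ if the order is $\alpha$; since the order and type of the moment problem are those of $P$ (equivalently of $A,B,C,D$), as recalled in the introduction, this is the last assertion.
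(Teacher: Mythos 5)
Your proof is correct, and it arrives at the paper's estimate with exactly the constants $C,K$ of \eqref{eq:CKintro}, but by a somewhat different route to the central product bound. Both arguments rest on the representation $P_n(z)=-P_n(0)B_n(z)+Q_n(0)D_n(z)$ and its analogue for $Q_n$ --- which you re-derive from the Christoffel--Darboux formulas together with the Wronskian rather than quoting Akhiezer, cf.\ \eqref{eq11} --- and on the inequality $\log(1+t)\le t^{\alpha}/\alpha$ of Proposition~\ref{thm:product}. The paper obtains the needed bound \eqref{simon} on $\sqrt{|B_n(z)|^2+|D_n(z)|^2}$ via Simon's transfer-matrix recursion for the matrix with entries $A_n,B_n,C_n,D_n$ and an operator-norm estimate of each rank-one factor; you instead fold the two scalar identities into the Volterra system $W_n(z)=W_n(0)+z\sum_{k<n}\gamma_{n,k}W_k(z)$, solve it by its finite Neumann expansion, and exploit the rank-one factorization $\gamma_{n,k}=\xi_n\cdot\eta_k$ so that the chain sums collapse into elementary symmetric polynomials, giving $\|W_n(z)\|\le\sigma_n^{1/2}\prod_{k<n}(1+|z|\sigma_k)$ with $\sigma_k=P_k^2(0)+Q_k^2(0)$. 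In effect this is Simon's matrix product expanded by hand: it is self-contained and yields a marginally sharper intermediate bound (a single product in the $\sigma_k$ instead of the split product in \eqref{simon}), at the price of more combinatorial bookkeeping, while the paper's matrix-norm argument is shorter. The remaining steps --- indeterminacy from $\ell^{\alpha}\subseteq\ell^{1}$, the $z$-independent summable majorant giving (ii) and the uniform convergence, and the passage to order and type --- agree with the paper; for $(1/b_n)\in\ell^{\alpha}$ you use the Wronskian \eqref{eq:LO} at $z=0$ with Cauchy--Schwarz, where the paper uses the AM--GM form \eqref{eq:LO1}, and both work.
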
 

\begin{rem}\label{thm:remaboutalpha} {\rm The main point in
    Theorem~\ref{thm:order1intro} is that (i) or (ii) imply
    \eqref{eq:Porderintro}. The equivalence between (i) and (ii) is in
    principle known, since it can easily be deduced from formula
    [1.23a] in Akhiezer \cite{Ak}. The theorem is proved in  Section~\ref{sec-ber}
 as Theorem~\ref{thm:order1}.
} 
\end{rem}

For an indeterminate moment problem the recurrence coefficients $(b_n)$ satisfy $\sum 1/b_n<\infty$ by Carleman's Theorem. On the other hand the condition
$\sum 1/b_n<\infty$ is not sufficient for indeterminacy, but if a condition of log-concavity is added, then indeterminacy holds by a result of
 Berezanski\u{\i} \cite{Be}, see \cite[p.26]{Ak}. This result is extended in Section~\ref{sec-ber} to include log-convexity,
  leading to the following main result, which is an almost converse of
  Theorem~\ref{thm:order1intro} in the sense that
  \eqref{eq:Porderintro} implies (i) and (ii) except for an
  $\varepsilon$, but under additional assumptions of the recurrence coefficients.

\begin{thm}\label{thm:beralphaintro} Assume that the coefficients of \eqref{eq:3t}  satisfy 
\begin{equation}\label{eq:origber}
\sum_{n=1}^\infty \frac{1+|a_n|}{b_{n-1}}<\infty,
\end{equation}
and that either \eqref{eq:logconvex} or \eqref{eq:logconcave} holds.
Assume in addition that $P$ satisfies 
$$
P(z)\le C\exp(K|z|^\alpha)
$$
for some $\alpha$ such that $0<\alpha<1$ and suitable constants $C,K>0$. 
 
Then
\begin{equation}\label{eq:Pnalphaintro}
1/b_n, P_n^2(0),Q_n^2(0)=O(n^{-1/\alpha}),
\end{equation}
so in particular $(1/b_n),(P_n^2(0)),(Q_n^2(0))\in \ell^{\alpha+\varepsilon}$ for any $\varepsilon>0$.
\end{thm}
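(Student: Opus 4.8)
The plan is to combine the hypothesis on $P$ with the regularity and growth assumptions on the recurrence coefficients so as to control the individual terms $P_n^2(z)$, and then to feed this back into the spectral‐theoretic machinery relating the $P_n$ to the $b_n$. First I would use the growth condition $P(z)\le C\exp(K|z|^\alpha)$ on a suitable family of circles. The key point is that $|z|^\alpha$ with $\alpha<1$ grows slowly, so for each fixed index $n$, evaluating at a cleverly chosen radius $r_n$ gives a bound on $|P_n(z)|$ (by, e.g., Cauchy estimates from $P$, or simply by $|P_n(z)|\le P(z)$) that competes against the lower bound on $|P_n(z)|$ coming from its leading coefficient $b_{n,n}=1/(b_0\cdots b_{n-1})$ from \eqref{eq:lead}. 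Comparing the two forces a bound of the shape $b_0\cdots b_{n-1}\le \exp(\text{const}\cdot n^{1/\alpha'})$ type estimate on the product of the $b_k$; the regularity assumption \eqref{eq:logconvex}/\eqref{eq:logconcave}, i.e. that $(b_n)$ is eventually log-convex or eventually log-concave, is exactly what lets one pass from a bound on the product $b_0\cdots b_{n-1}$ to a bound on the single term $b_n$, yielding $1/b_n=O(n^{-1/\alpha})$.

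Next I would transfer this bound from $(1/b_n)$ to $(P_n^2(0))$ and $(Q_n^2(0))$. Here the growth condition \eqref{eq:origber}, $\sum (1+|a_n|)/b_{n-1}<\infty$, enters decisively: it is the hypothesis under which (as developed in Section~\ref{sec-ber}, and already in the Berezanski\u{\i}-type results quoted around \cite[p.26]{Ak}) the asymptotic size of $P_n(0)$ and $Q_n(0)$ is governed by the partial products of the $b_k$ — roughly, $P_n^2(0)$ behaves like $1/(b_{n-1}b_n)$ up to bounded factors, because the recurrence \eqref{eq:3t} at $z=0$ becomes a small perturbation of the pure "birth–death" recurrence $0=b_n r_{n+1}+b_{n-1}r_{n-1}$ whose solutions are explicit products/ratios of the $b_k$. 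So once $1/b_n=O(n^{-1/\alpha})$ is established, the same order bound propagates to $P_n^2(0)$ and $Q_n^2(0)$, which is \eqref{eq:Pnalphaintro}. The final clause, that these sequences lie in $\ell^{\alpha+\varepsilon}$, is then immediate: a sequence of order $O(n^{-1/\alpha})$ is $\alpha'$-summable for every $\alpha'>\alpha$, since $\sum n^{-\alpha'/\alpha}<\infty$ precisely when $\alpha'/\alpha>1$.

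I expect the main obstacle to be the first step — extracting the single-term estimate $1/b_n=O(n^{-1/\alpha})$ from the global growth bound on $P$. The difficulty is twofold: (a) the bound $|P_n(z)|\le P(z)$ only controls $P_n$ through its whole "mass," so one must choose the evaluation radius $r_n$ optimally (balancing $b_{n,n}r_n^n$ against $C\exp(Kr_n^\alpha)$, which gives $r_n$ of order $n^{1/\alpha}$ up to constants) and argue that no cancellation in $P_n(z)=\sum_{k\le n} b_{k,n}z^k$ spoils the lower bound — this is where one may need to work on a circle and use a mean-value/$L^2$ argument rather than a single point; and (b) converting the resulting bound on the product $\prod_{k<n} b_k$ into a bound on $b_n$ alone genuinely requires the eventual monotonicity of the ratios $b_{n+1}/b_n$ supplied by \eqref{eq:logconvex}/\eqref{eq:logconcave}, and the two cases (log-convex versus log-concave) must be handled slightly differently — in the log-convex case $b_n$ is comparable to the geometric-mean growth from above, in the log-concave case from below, and one has to check that in both regimes the product bound $\exp(cn^{1/\alpha})$ forces $b_n=O(n^{1/\alpha})\cdot(\text{slowly varying})$, hence $1/b_n=O(n^{-1/\alpha})$ after absorbing logarithmic factors into the $\varepsilon$ if necessary. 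Once this is in place, the rest is the (known) perturbation analysis of the recurrence at $0$ under \eqref{eq:origber}.
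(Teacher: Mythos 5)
Your overall strategy is the one the paper itself uses: play the leading coefficient $b_{n,n}=1/(b_0\cdots b_{n-1})$ of $P_n$ against the growth bound $P(z)\le C\exp(K|z|^\alpha)$ by an $L^2$/Parseval argument on circles of radius $r_n\sim n^{1/\alpha}$ (this is exactly how the paper avoids the cancellation issue you raise, via \eqref{eq:ckP} and \eqref{eq:consBer}), use the eventual monotonicity of $(b_n)$ coming from \eqref{eq:logconvex}/\eqref{eq:logconcave} to pass from the product to a single $b_{n-1}$, and then transfer to $P_n^2(0),Q_n^2(0)$ through the Berezanski\u{\i} estimate \eqref{eq:spec} of Theorem~\ref{thm:ber}, which under \eqref{eq:origber} gives $P_n^2(0),Q_n^2(0)=O(1/b_{n-1})$ (only this one-sided bound is needed, not the two-sided comparability with $1/(b_{n-1}b_n)$ you suggest). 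The final $\ell^{\alpha+\varepsilon}$ remark is indeed immediate.

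However, the central step is written with the inequalities in the wrong direction, and as stated it does not yield the theorem. Comparing the lower bound $\frac{1}{2\pi}\int_0^{2\pi}|P_n(re^{it})|^2\,dt\ge b_{n,n}^2r^{2n}$ with $|P_n(z)|\le P(z)\le C\exp(K|z|^\alpha)$ gives $b_{n,n}r^n\le C\exp(Kr^\alpha)$, i.e.\ a \emph{lower} bound $b_0\cdots b_{n-1}\ge r^n/(C\exp(Kr^\alpha))$, which at $r=n^{1/\alpha}$ is of size $n^{n/\alpha}e^{-Kn}/C$ --- not the upper bound $b_0\cdots b_{n-1}\le\exp(\mathrm{const}\cdot n^{1/\alpha'})$ you assert. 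Moreover, the conclusion \eqref{eq:Pnalphaintro} requires a \emph{lower} bound $b_n\ge cn^{1/\alpha}$, so your chain ``the product bound $\exp(cn^{1/\alpha})$ forces $b_n=O(n^{1/\alpha})\cdot(\mbox{slowly varying})$, hence $1/b_n=O(n^{-1/\alpha})$'' is a non sequitur: an upper bound on $b_n$ gives a lower bound on $1/b_n$, not a big-$O$ bound. The correct use of the regularity hypothesis is also simpler than your two-case discussion: by Lemma~\ref{thm:inc} (log-convexity or log-concavity together with $\sup b_n=\infty$, which follows from \eqref{eq:origber}) the sequence $(b_n)$ is eventually increasing, whence $b_0\cdots b_{n-1}\le \mathrm{const}\cdot b_{n-1}^{\,n}$; combined with the lower bound on the product at $r=n^{1/\alpha}$ this gives $b_{n-1}\ge cn^{1/\alpha}$ directly, with no logarithmic loss (so no $\varepsilon$ needs to be sacrificed in \eqref{eq:Pnalphaintro}). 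With these directions fixed, your plan coincides with the paper's proof of Theorem~\ref{thm:beralpha}.
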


Theorem~\ref{thm:beralphaintro} is proved as Theorem~\ref{thm:beralpha}, where we have replaced condition \eqref{eq:origber} by the slightly weaker
condition \eqref{eq:finalber}.
Under the same assumptions we prove in Theorem~\ref{thm:beralpha1} that the order of the moment problem is equal to the convergence exponent of the sequence $(b_n)$. In case of order zero it is also possible to characterize the logarithmic order of the moment problem as the convergence exponent of the sequence $(\log b_n)$, cf. Theorem~\ref{thm:Berlogorder}.

In Section~\ref{sec-ord} the results of Theorem~\ref{thm:order1intro} and of Theorem~\ref{thm:beralphaintro}
are extended to more general types of growth, based on a notion of an order function and its dual. See
  Theorem~\ref{thm:beta1} and Theorem~\ref{thm:order2}.  

In Section~\ref{sec-double}  we focus on order functions of the form $\alpha(r)=(\log\log r)^{\alpha}$, which lead to the concept of double logarithmic order and type, giving a refined classification of entire functions and moment problems of logarithmic order 0.
The six functions $A,B,C,D,P,Q$ have the same double logarithmic order and type called the double logarithmic order $\ww{}$ and type $\dd{}$ of the
moment problem.

 We establish a number of formulas
expressing the double logarithmic order and type of an entire function in terms of the coefficients in the power series expansion and the zero counting
function. The proof of these results are given in the Appendix.

For an indeterminate moment problem the numbers 
$$
c_k=\left(\sum_{n=k}^\infty b_{k,n}^2\right)^{1/2}
$$
were studied by the authors in \cite{B:S}, and $c_k$ tends to zero so quickly that
$$
\Phi(z)=\sum_{k=0}^\infty c_kz^k
$$
determines an entire function of minimal exponential type. We study this function in Section~\ref{sec-phi} and prove that $\Phi$ has the same order and type as
 the moment problem, and if the common order is zero, then $\Phi$ has the same logarithmic order and type as the moment problem. 
This is extended to double logarithmic order and type in Section~\ref{sec-double}.

In Section~\ref{sec-lif}  we revisit a paper  
\cite{L} by Liv\v sic, where it was proved that the function
$$
F(z)=\sum_{n=0}^\infty \frac{z^{2n}}{s_{2n}}
$$
has  order  less than or equal to the order of the entire function 
$$
B(z)=-1+z\sum_{k=0}^\infty Q_k(0)P_k(z).
$$
We give a another proof of this result and extend it to logarithmic and double logarithmic order, using results about $\Phi$. It seems to be unknown whether the order of $F$
 is always equal to the order of the moment problem. We prove in Theorem~\ref{thm:=inLif1} that this the case,
if the recurrence coefficients satisfy the conditions
of Theorem~\ref{thm:ber}, and at the same time it turns out that the entire function
$$
H(z)=\sum_{n=0}^\infty b_{n,n}z^n,
$$
where $b_{n,n}$ is the leading coefficient of $P_n$, cf. \eqref{eq:OP1}, also has this common order.

\section{Preliminaries}\label{sec-pre}

For a continuous function $f:\mathbb C\to\mathbb C$ we define the
maximum modulus $M_f:[0,\infty[\to[0,\infty[$ by
$$
M_f(r)=\max_{|z|\le r}|f(z)|.
$$
The order $\rho_f$ of $f$ is defined as the infimum of the numbers
$\alpha>0$ for which there exists a majorization of the form
$$
\log M_f(r)\le_{\text{\tiny as}} r^\alpha,
$$
where we use a notation inspired by \cite{Le}, meaning that the above inequality
holds for $r$ sufficiently large. We will only discuss these concepts
for unbounded functions $f$, so that $\log M_f(r)$ is positive for $r$
sufficiently large.

It is easy to see that
$$
\rho_f=\limsup_{r\to\infty}\frac{\log\log M_f(r)}{\log r}.
$$

If $0<\rho_f<\infty$ we define the type $\tau_f$ of $f$ as
$$
\tau_f=\inf \{ c >0\, | \, \log M_f(r)\leq_{\text{\tiny as}} c r^{\rho_f}\},
$$
and we have
$$
\tau_f=\limsup_{r\to\infty}\frac{\log M_f(r)}{r^{\rho_f}}.
$$

The logarithmic order as defined in \cite{B:P:H},\cite{Ped} is a
number in the interval $[1,\infty]$,  and the functions studied in
Ramis \cite{Ra} are of logarithmic order 2. A detailed study of
meromorphic functions of finite logarithmic order has been published
in Chern \cite{C}.  

 We find it appropriate to renormalize this 
definition by subtracting 1, so the new logarithmic order of this paper belongs to
the interval $[0,\infty]$. This will simplify certain formulas,
which will correspond to formulas for the double logarithmic order
developed in Section~\ref{sec-double}.

For an unbounded continuous function $f$ we define the {\em
  logarithmic order}  $\w{f}$ as
$$
\w{f} = \inf \{ \alpha >0\, | \, \log M_f(r)\le_{\text{\tiny as}}(\log r )^{\alpha+1}\,
\}
= \inf \{ \alpha >0\, | \, M_f(r)\le_{\text{\tiny as}} r^{(\log r)^\alpha}\,
\},
$$
where $\w{f}=\infty$, if there are no $\alpha>0$ satisfying the asymptotic inequality. Of course $\w{f}<\infty$ is only possible for functions of order 0.

Note that  an entire function $f$ satisfying $\log
M_f(r)\le_{\text{\tiny as}} (\log r)^\alpha$ for some $\alpha<1$ is
constant by the Cauchy estimate
$$
\frac{|f^{(n)}(0)|}{n!}\le \frac{M_f(r)}{r^n}.
$$

It is easy to obtain that
$$
\w{f} = \limsup_{r\to \infty}\frac{\log \log M_f(r)}{\log \log r}-1.
$$

When $\w{f}<\infty$ we define the {\em logarithmic type} $\tau_f^{[1]}$ as
\begin{eqnarray*}
\tau_f^{[1]} &=& \inf \{ c >0\, | \, \log M_f(r)\leq _{\text{\tiny
    as}} c(\log r )^{\w{f}+1}\, \}\\
&=&
 \inf \{ c >0\, | \, M_f(r)\leq_{\text{\tiny as}} r^{c(\log r)^{\w{f}}}\,\},
\end{eqnarray*}
and it is readily found  that
$$
\tau_f^{[1]} = \limsup_{r\to \infty}\frac{\log M_f(r)}{(\log r)^{\w{f}+1}}.
$$
An entire function $f$ satisfying  $\w{f}=0$ and $\tau_f^{[1]}<\infty$
is necessarily a polynomial of degree $\le \tau_f^{[1]}$.

The shifted moment problem is 
associated with the cut off sequences $(a_{n+1})$ and $(b_{n+1})$ from
\eqref{eq:3t}. In
terms of Jacobi matrices, the Jacobi matrix $J_s$ of the shifted
problem is obtained from the original Jacobi matrix $J$ by deleting
the first row and column. It is well-known that a moment problem and
the shifted one are either both determinate or both indeterminate. 
If indeterminacy holds, Pedersen \cite{Ped0} studied the relationship
between the $A,B,C,D$-functions of the two problems and deduced
that the shifted moment problem has the same order and type as
the original problem. We mention that the $P$-function of the shifted
problem equals $b_0Q(z)$. This equation shows that the two
problems have the same logarithmic order and type in case the common
order is zero.

By repetition, the $N$-times shifted problem is then indeterminate
with the same growth properties as the original problem. This means
that it is the large $n$ behaviour of the recurrence coefficients
which determine the order and type of an indeterminate moment
problem. This is in contrast to the behaviour of the moments, where a
modification of the zero'th moment can change an indeterminate moment
problem to a determinate one, see e.g. \cite[Section 5]{B:S}.  

In the indeterminate case we can define an entire function of two
complex variables
\begin{equation}\label{eq:kernel}
K(z,w)=\sum_{n=0}^\infty P_n(z)P_n(w)=\sum_{j,k=0}^\infty a_{j,k}z^jw^k,
\end{equation}
called the {\it reproducing kernel} of the moment problem,
and we collect the coefficients of the power series as the symmetric matrix
$\mathcal A=(a_{j,k})$ given by
\begin{equation}\label{eq:ajk}
a_{j,k}=\sum_{n=\max(j,k)}^\infty b_{j,n}b_{k,n}.
\end{equation}
It was proved in \cite{B:S} that the series \eqref{eq:ajk} is absolutely convergent
and that the matrix $\mathcal A$ is of trace class with 
$$
\tr(\mathcal A)=\rho_0,
$$
where $\rho_0$ is given by
\begin{equation}\label{eq:rho}
\rho_0=\frac{1}{2\pi}\int_0^{2\pi}K(e^{it},e^{-it})\,dt=
\frac{1}{2\pi}\int_0^{2\pi}P^2(e^{it})\,dt<\infty.
\end{equation}

Define
\begin{equation}\label{eq:cn}
c_k=\sqrt{a_{k,k}}=\left(\sum_{n=k}^\infty b_{k,n}^2\right)^{1/2}.
\end{equation}

From \eqref{eq:OP1} we have
\begin{equation}\label{eq:OP2}
b_{k,n}=\frac{1}{2\pi i}\int_{|z|=r}P_n(z)z^{-(k+1)}\,dz
=r^{-k}\frac{1}{2\pi}\int_0^{2\pi} P_n(re^{it})e^{-ikt}\,dt.
\end{equation}

By 
\eqref{eq:OP2} and by Parseval's identity
we have for $r>0$
\begin{equation}\label{eq:trzy}
\sum_{k=0}^\infty r^{2k}\sum_{n=k}^\infty |b_{k,n}|^2=\sum_{n=0}^\infty\sum_{k=0}^n r^{2k}|b_{k,n}|^2=\sum_{n=0}^\infty
\frac{1}{2\pi}\int_0^{2\pi} |P_n(re^{it})|^2\,dt,
 \end{equation}
hence
\begin{equation}\label{eq:ckP}
\sum_{k=0}^\infty r^{2k}c_k^2=\frac{1}{2\pi}\int_0^{2\pi} P^2(re^{it})\,dt,
\end{equation}
an identity already exploited in \cite{B:S}.

\section{The order and type of $\Phi$}\label{sec-phi}

The heading refers to the function
\begin{eqnarray}\label{eq:Phi}
\Phi(z)=\sum_{k=0}^\infty c_kz^k,
\end{eqnarray}
where $c_k$ is defined in \eqref{eq:cn}. By \cite[Prop. 4.2]{B:S} we
know that $\lim_{k\to\infty}k\root{k}\of{c_k}=0$, which shows that $\Phi$
is an entire function of minimal exponential type.

\begin{thm}\label{thm:ord1} The order and type of $\Phi$ are equal to the order $\rho$
  and  type $\tau$  of the moment problem.
\end{thm}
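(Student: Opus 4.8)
The plan is to compare the maximum modulus of $\Phi$ with the mean value of $P^2$ on circles, exploiting the key identity \eqref{eq:ckP}, namely $\sum_{k=0}^\infty r^{2k}c_k^2=\frac{1}{2\pi}\int_0^{2\pi} P^2(re^{it})\,dt$. The right-hand side is trapped between two expressions involving $M_P$: on the one hand it is $\le M_P(r)^2$, and on the other hand, since $P$ is continuous and $P(e^{it})$ is not identically zero (indeed $\rho_0>0$ by \eqref{eq:rho}), a standard compactness/subharmonicity argument gives a lower bound of the form $\frac{1}{2\pi}\int_0^{2\pi} P^2(re^{it})\,dt\ge c\, M_P(\lambda r)^2$ for suitable constants $c>0$, $0<\lambda<1$ and $r$ large; alternatively one can use that $P$ majorizes each $|P_n|$ and pick an index $n$ with $P_n$ nonconstant to get a genuine polynomial lower bound. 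Thus, up to bounded multiplicative factors and a bounded rescaling of $r$, the quantity $S(r):=\sqrt{\sum_{k=0}^\infty r^{2k}c_k^2}$ has the same order and type as $P$, hence the same as the moment problem by \cite{B:P}.

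The second half is to compare $S(r)$ with $M_\Phi(r)=\max_{|z|\le r}|\Phi(z)|$. Here one uses the elementary two-sided inequality valid for any entire function with nonnegative Taylor coefficients $c_k\ge 0$:
$$
M_\Phi(r)=\sum_{k=0}^\infty c_k r^k\ge \left(\sum_{k=0}^\infty c_k^2r^{2k}\right)^{1/2}=S(r),
$$
since $\sum c_k^2 r^{2k}\le(\sum c_k r^k)^2$ when all terms are nonnegative. For the reverse direction, $\left(\sum c_k r^k\right)^2=\sum_{j,k}c_jc_k r^{j+k}\le\sum_{j,k}\tfrac12(c_j^2+c_k^2)r^{j+k}$, which is not immediately controlled by $S$; instead I would use the more robust comparison via the substitution $r\mapsto 2r$ together with Cauchy–Schwarz in a geometric-weight form: $\sum_k c_k r^k=\sum_k (c_k (2r)^k)2^{-k}\cdot(r/r)\le\big(\sum_k c_k^2(2r)^{2k}\big)^{1/2}\big(\sum_k 4^{-k}\big)^{1/2}\cdot\text{(correction)}$ — more cleanly, write $\sum_k c_k r^k\le\big(\sum_k c_k^2(2r)^{2k}\big)^{1/2}\big(\sum_k (r/(2r))^{2k}\big)^{1/2}$ after pairing $c_k r^k=(c_k(2r)^k)\cdot 2^{-k}$, giving $M_\Phi(r)\le \text{const}\cdot S(2r)$. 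So $S(r)\le M_\Phi(r)\le C\,S(2r)$, and since doubling the argument changes neither the order nor the type (for $0\le\rho\le1$ and $\tau$ the doubling factor is $2^\rho$, which is absorbed only if $\rho=0$ — so for type one must be slightly more careful, see below).

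Putting the two comparisons together: $M_\Phi(r)$ and $M_P(r)$ agree up to bounded factors and bounded rescalings of the radius, so they have the same order $\rho$, which by \cite{B:P} equals the order of the moment problem. For the type, the crude factor $S(2r)$ would introduce a spurious $2^\rho$; to get the exact type I would instead replace the factor $2$ by $1+\varepsilon$ for arbitrary $\varepsilon>0$, obtaining $S(r)\le M_\Phi(r)\le C_\varepsilon S((1+\varepsilon)r)$, conclude $\tau_\Phi\le(1+\varepsilon)^{\rho}\tau_P$ for all $\varepsilon>0$ hence $\tau_\Phi\le\tau_P$, and similarly use the lower-bound rescaling factor $\lambda=1-\varepsilon$ on the $P$ side to get the matching inequality $\tau_\Phi\ge\tau_P$. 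Combined with $\Phi$ being of minimal exponential type (so $\rho\le 1$, and if $\rho=1$ then $\tau=0$, matching Riesz's theorem for the moment problem), this gives equality of both order and type.

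I expect the main obstacle to be the careful bookkeeping of the rescaling constants so that the \emph{type} (not just the order) comes out exactly equal — the order half is essentially immediate from the sandwich $S(r)\le M_\Phi(r)\le C\,S(2r)$ together with identity \eqref{eq:ckP}, but extracting the precise type requires the $(1+\varepsilon)$-refinement on both the upper bound (Cauchy–Schwarz with geometric weights) and the lower bound (the subharmonic mean-value estimate $\int P^2\,dt\gtrsim M_P((1-\varepsilon)r)^2$, whose constant must be tracked). A secondary technical point is justifying the lower bound $\frac{1}{2\pi}\int_0^{2\pi}P^2(re^{it})\,dt\ge c_\varepsilon M_P((1-\varepsilon)r)^2$; this follows from Harnack-type / maximum-principle considerations for the subharmonic function $|P|^2$ applied on an annulus, or directly by noting that some $P_{n_0}$ is a nonconstant polynomial and $\frac{1}{2\pi}\int_0^{2\pi}P^2(re^{it})\,dt\ge\frac{1}{2\pi}\int_0^{2\pi}|P_{n_0}(re^{it})|^2\,dt\ge|b_{n_0,n_0}|^2 r^{2n_0}$ — but that last bound only gives polynomial growth, which suffices for the order comparison but not the type, so for the type one really does want the genuine subharmonic mean-value lower bound on $M_P$.
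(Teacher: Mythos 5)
Your argument is correct, but it is organized differently from the paper's. You sandwich $M_\Phi$ between $S(r)=\bigl(\sum_k c_k^2r^{2k}\bigr)^{1/2}$ and $C_\varepsilon S((1+\varepsilon)r)$ by Cauchy--Schwarz with geometric weights, and then sandwich $S(r)$ between $c_\lambda^{1/2}M_P(\lambda r)$ and $M_P(r)$ via \eqref{eq:ckP}; the only nontrivial ingredient is the lower bound $\frac{1}{2\pi}\int_0^{2\pi}P^2(re^{it})\,dt\ge c_\lambda M_P(\lambda r)^2$, which is indeed valid since $P^2=\sum_n|P_n|^2$ is a nonnegative subharmonic function and the Poisson kernel on $|z|\le\lambda r$ is bounded by $(1+\lambda)/(1-\lambda)$ --- and in fact you can avoid subharmonicity altogether: by \eqref{eq:trzy} and Cauchy--Schwarz on the coefficients, $|P_n(z)|^2\le(1-\lambda^2)^{-1}\sum_k|b_{k,n}|^2r^{2k}$ for $|z|\le\lambda r$, and summing over $n$ gives exactly $M_P(\lambda r)^2\le(1-\lambda^2)^{-1}\frac{1}{2\pi}\int P^2(re^{it})\,dt$. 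Your $(1\pm\varepsilon)$ bookkeeping then gives the type exactly, as you say. The paper proceeds instead as follows: for the lower bound on $\Phi$ it never touches $M_P$ at all, but uses the function $D$, whose Taylor coefficients are $a_{j,0}$ with $|a_{j,0}|\le c_0c_j$, giving $M_D(r)\le c_0rM_\Phi(r)$ (see \eqref{eq:D-Phi}) and hence $\rho_D\le\rho_\Phi$, $\tau_D\le\tau_\Phi$; for the upper bound it bounds the auxiliary function $\Psi(z)=\sum_k c_k^2z^{2k}$ through \eqref{eq:ckP} and then transfers back to $\Phi$ not by rescaling $r$ but by the coefficient formulas \eqref{eq:deforder} and \eqref{eq:type}, which give the exact identities $\rho_\Psi=\rho_\Phi$ and $\tau_\Psi=2\tau_\Phi$, so no $\varepsilon$-refinement is needed for the type. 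What your route buys is that it stays entirely at the level of maximum moduli of $P$ and $\Phi$ (it uses only that $\rho_P=\rho$, $\tau_P=\tau$ from \cite{B:P}, not the $D$-function), and it adapts verbatim to any growth scale insensitive to bounded rescalings of $r$; what the paper's route buys is that it bypasses both the mean-value lower bound and the delicate tracking of rescaling constants, at the cost of invoking Levin's coefficient characterizations of order and type.
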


{\it Proof.} By \eqref{eq:OP1}  and \eqref{eq:ajk} we  have
\begin{eqnarray}\label{eq:D}
D(z)= z\sum_{k=0}^\infty P_k(0)P_k(z)=z\sum_{k=0}^\infty
b_{0,k}\sum_{j=0}^k b_{j,k}z^j
= z\sum_{j=0}^\infty a_{j,0}z^j.
\end{eqnarray}

Therefore,
\begin{eqnarray}\label{eq:|D|}
|D(z)|\le |z|\sum_{j=0}^\infty |a_{j,0}||z|^j\le
c_0|z|\sum_{j=0}^\infty
c_j|z|^j,
\end{eqnarray}
where we used $|a_{j,k}|\le c_jc_k$. This leads to the following
inequality for the maximum moduli
 \begin{equation}\label{eq:D-Phi}
 M_D(r)\le c_0 r M_\Phi(r),
\end{equation}
from which we clearly get $\rho=\rho_D\le \rho_\Phi$.

Since 
$\rho_P=\rho$ (the order of the moment problem), we get for any $\varepsilon>0$
$$
P(re^{i\theta})\le \exp(r^{\rho+\varepsilon})
\quad\mbox{for}\;\;r\ge R(\varepsilon).
$$
Defining
\begin{eqnarray}\label{eq:Psi}
\Psi(z)=\sum_{k=0}^\infty c_k^2z^{2k},
\end{eqnarray}
we get by \eqref{eq:ckP}
$$
M_\Psi(r)=\sum_{k=0}^\infty c_k^2r^{2k}\le\exp(2r^{\rho+\varepsilon})
\le \exp(r^{\rho+2\varepsilon})\quad\mbox{for}\;\;r\ge
\max(R(\varepsilon),2^{1/\varepsilon}),
$$
hence $\rho_\Psi\le \rho+2\varepsilon$ and finally $\rho_\Psi\le
\rho$.

However, $\rho_\Psi=\rho_\Phi$ because for an entire function
 $f(z)=\sum_{n=0}^\infty a_n z^n$ it is known (\cite{Le}) that
\begin{equation}\label{eq:deforder}
\rho_f=\limsup_{n\to\infty}\frac{\log n}{\log\left(\frac{1}{\root n\of
    {|a_n|}}\right)}.
\end{equation}
This shows the assertion of the theorem concerning order.

Concerning type, let us assume that the common order of the moment
problem and $\Phi$ is $\rho$, satisfying $0<\rho<\infty$ in order to
define type. For a function $f$ as above with order $\rho$, the type
$\tau_f$ can be determined as
\begin{eqnarray}\label{eq:type} 
\tau_f=\frac{1}{e\rho}
\limsup_{n\to\infty} \left(n |a_n|^{\rho/n}\right),
\end{eqnarray}
cf. \cite{Le}.

From \eqref{eq:D-Phi} we  get $\tau=\tau_D\le \tau_\Phi$, where
$\tau$ is the type of the moment problem.

Since $P$ has type $\tau$, we know that $|P(re^{i\theta})|\le
e^{(\tau+\varepsilon)r^\rho}$ for $r$ sufficiently large depending on
$\varepsilon>0$, hence by \eqref{eq:ckP}
$$
M_\Psi(r)=\sum_{k=0}^\infty
c_k^2r^{2k}\le\exp(2(\tau+\varepsilon)r^{\rho}),
$$
and we conclude that $\tau_\Psi\le 2\tau$. Fortunately
$\tau_\Psi=2\tau_\Phi$, as is easily seen from 
\eqref{eq:type}, so we get $\tau_\Phi\le \tau$, and the assertion about
type has been proved.
$\quad\square$

\begin{thm}\label{thm:ord2} Suppose the order of the moment problem is zero. Then $\Phi$
  has the same logarithmic order $\rho^{[1]}$ and type $\tau^{[1]}$ as the moment problem.
\end{thm}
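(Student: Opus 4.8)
The plan is to run the proof of Theorem~\ref{thm:ord1} again, with ordinary order and type replaced by logarithmic order and type; the logarithmic scale actually makes matters easier, because doubling $r$, or multiplying by a factor such as $r$ or $c_0$, costs nothing after one further $\log$, so a cruder estimate than the $\Psi$-argument used in Theorem~\ref{thm:ord1} suffices. Since the moment problem has order zero, Theorem~\ref{thm:ord1} gives $\rho_\Phi=\rho=0$, so $\rho^{[1]}_\Phi\in[0,\infty]$ is well defined.

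For the bounds $\rho^{[1]}_\Phi\ge\rho^{[1]}$ and $\tau^{[1]}_\Phi\ge\tau^{[1]}$ I would reuse \eqref{eq:D-Phi}, $M_D(r)\le c_0rM_\Phi(r)$: taking logarithms and dividing by $(\log r)^{\rho^{[1]}+1}$, the terms $\log c_0$ and $\log r$ disappear in the $\limsup$ as soon as $\rho^{[1]}>0$, so $\rho^{[1]}_D\le\rho^{[1]}_\Phi$ and $\tau^{[1]}_D\le\tau^{[1]}_\Phi$; since $D$ carries the logarithmic order and type of the moment problem, this is what is wanted. For the reverse bounds I would argue directly from \eqref{eq:ckP} instead of through $\Psi$: as $P\ge0$, \eqref{eq:ckP} gives $c_k^2r^{2k}\le\frac1{2\pi}\int_0^{2\pi}P^2(re^{it})\,dt\le M_P(r)^2$, hence $c_kr^k\le M_P(r)$ for all $k$ and all $r>0$, so for $0<s<r$
$$
M_\Phi(s)=\sum_{k=0}^\infty c_ks^k=\sum_{k=0}^\infty(c_kr^k)(s/r)^k\le\frac{M_P(r)}{1-s/r};
$$
choosing $r=2s$ yields $M_\Phi(s)\le 2M_P(2s)$, and since $\log(2s)\sim\log s$ this gives $\rho^{[1]}_\Phi\le\rho^{[1]}_P=\rho^{[1]}$ and, via $\limsup_s\frac{\log M_P(2s)}{(\log s)^{\rho^{[1]}+1}}=\limsup_s\frac{\log M_P(2s)}{(\log 2s)^{\rho^{[1]}+1}}=\tau^{[1]}$, also $\tau^{[1]}_\Phi\le\tau^{[1]}$.

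Combining, $\rho^{[1]}_\Phi=\rho^{[1]}$, and $\tau^{[1]}_\Phi=\tau^{[1]}$ whenever $0<\rho^{[1]}<\infty$; if $\rho^{[1]}=\infty$ there is no type to discuss. The remaining case $\rho^{[1]}=0$ needs a separate word, because then the factor $\log r$ in \eqref{eq:D-Phi} is no longer negligible for the type. Here I would note that $c_n\ge b_{n,n}>0$ for every $n$ by \eqref{eq:cn} and \eqref{eq:lead}, so $\Phi$ is not a polynomial; hence $\rho^{[1]}_\Phi=0$ forces $\tau^{[1]}_\Phi=\infty$ as noted in Section~\ref{sec-pre}, and as $\tau^{[1]}_\Phi\le\tau^{[1]}$ was obtained with no restriction on $\rho^{[1]}$, we get $\tau^{[1]}=\infty=\tau^{[1]}_\Phi$. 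I expect the main point of care to be the bookkeeping of how logarithmic order and type transform under $r\mapsto2r$ and under multiplication by $r$, together with isolating this borderline case; alternatively one may mimic Theorem~\ref{thm:ord1} literally, keeping the auxiliary function $\Psi$ of \eqref{eq:Psi} and invoking the coefficient formulas for logarithmic order and type in place of \eqref{eq:deforder}--\eqref{eq:type}.
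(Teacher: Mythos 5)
Your argument is correct, and half of it diverges from the paper in an interesting way. The direction $\rho^{[1]}\le\w{\Phi}$, $\tau^{[1]}\le\tau_\Phi^{[1]}$ via \eqref{eq:D-Phi} is exactly what the paper does (including the observation that the factors $c_0$ and $r$ are harmless on the logarithmic scale once $\rho^{[1]}>0$). For the reverse direction the paper keeps the auxiliary function $\Psi(z)=\sum c_k^2z^{2k}$, bounds $M_\Psi$ through \eqref{eq:ckP}, and then transfers the information to $\Phi$ by the coefficient formulas \eqref{eq:logorder} and \eqref{eq:logtype} from \cite{B:P:H}, which give $\w{\Phi}=\w{\Psi}$ and $\tau^{[1]}_\Psi=2\tau^{[1]}_\Phi$. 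You bypass $\Psi$ and the coefficient characterizations entirely: from \eqref{eq:ckP} you extract the pointwise bound $c_kr^k\le M_P(r)$ and sum a geometric series to get $M_\Phi(s)\le 2M_P(2s)$, after which both $\w{\Phi}\le\rho^{[1]}$ and $\tau^{[1]}_\Phi\le\tau^{[1]}$ follow directly at the level of maximum moduli, using only that $\log(2s)\sim\log s$. This is more elementary and self-contained (no appeal to the results of \cite{B:P:H} beyond the fact that $P$ and $D$ carry the logarithmic order and type of the moment problem), at the cost of being slightly less mechanical than the coefficient-formula route, which the paper can reuse verbatim for double logarithmic order in Theorem~\ref{thm:dord2}. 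Your explicit treatment of the borderline case $\rho^{[1]}=0$ (where $c_n\ge b_{n,n}>0$ shows $\Phi$ is transcendental, so $\tau^{[1]}_\Phi=\infty$, and your unrestricted inequality $\tau^{[1]}_\Phi\le\tau^{[1]}$ forces $\tau^{[1]}=\infty$ as well) is sound and a bit more careful than the paper, which dismisses this case with a parenthetical remark and proves type equality only for $0<\rho^{[1]}<\infty$.
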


\begin{proof} The logarithmic order $\w{f}$ of an entire
function $f=\sum_0^\infty a_nz^n$ of order zero can be calculated as
\begin{eqnarray}\label{eq:logorder}
\w{f} 
= \limsup_{n\to \infty}\frac{\log n}{\log \log\left(
    \frac{1}{\sqrt[n]{|a_n|}}\right)},
\end{eqnarray}
cf. \cite{B:P:H}.
From \eqref{eq:D-Phi} we want to see that 
$\rho^{[1]}=\w{D}\le\w{\Phi}$. This is
clear if $\w{\Phi}=\infty$, so assume it to be finite. For any
$\varepsilon>0$ we have for $r$ sufficiently large
$$
M_D(r)\le c_0rr^{(\log r)^{\w{\Phi}+\varepsilon}}\le
r^{(\log r)^{\w{\Phi}+2\varepsilon}},
$$
which gives the assertion.

We next use that for given $\varepsilon>0$ we have for $r$
sufficiently large
$$
P(re^{i\theta})\le r^{(\log r)^{\rho^{[1]}+\varepsilon}},
$$
which by \eqref{eq:ckP} yields 
$$
M_\Psi(r)\le r^{2(\log r)^{\rho^{[1]}+\varepsilon}}\le_{\text{\tiny as}} r^{(\log r)^{\rho^{[1]}+2\varepsilon}},
$$
hence $\w{\Psi}\le \rho^{[1]}$. From \eqref{eq:logorder} we see that  $\w{\Phi}=\w{\Psi}$, hence
$\rho^{[1]} = \w{\Phi}$.

We next assume that the common value $\rho^{[1]}$ of the logarithmic
order is a finite number $>0$. (Transcendental function of logarithmic
order 0 have necessarily logarithmic type $\infty$.)  We shall show
that $\tau^{[1]}=\tau_\Phi^{[1]}$ and recall that the logarithmic type
$\tau_f^{[1]}$ of a function $f=\sum_0^\infty a_nz^n$ with
logarithmic order $0<\rho^{[1]}<\infty$ 
is given by the formula, cf. \cite{B:P:H},
\begin{eqnarray}\label{eq:logtype}
\tau_f^{[1]}=\frac{(\rho^{[1]})^{\rho^{[1]}}}{(\rho^{[1]}+1)^{\rho^{[1]}+1}}\limsup_{n\to
  \infty}\frac{n}{\left(\log \frac{1}{\sqrt[n]{|a_n|}}\right)^{\rho^{[1]}}}.
\end{eqnarray}
Again it is clear that $\tau_\Psi^{[1]}=2\tau_\Phi^{[1]}$, and from 
\eqref{eq:D-Phi} we get $\tau^{[1]}\le \tau_\Phi^{[1]}$, while  
\eqref{eq:ckP} leads to $\tau_\Psi^{[1]}\le 2\tau^{[1]}$. This finally
gives $\tau^{[1]}=\tau_\Phi^{[1]}$.
\end{proof}

\section{Berezanski\u{\i}'s method}\label{sec-ber}

We are going to use and extend a method due to Berezanski\u{\i}
\cite{Be} giving a sufficient condition for indeterminacy. The method
is explained in \cite[p.26]{Ak}. Berezanski\u{\i} treated the case below of log-concavity.

\begin{lem}\label{thm:inc} Let $b_n>0,n\ge 0$ satisfy
\begin{equation}\label{eq:conv}
\sup_{n\ge 0} b_n = \infty
\end{equation}
and either
\begin{equation}\label{eq:logconvex}
\mbox{log-convexity:}\quad\quad b_n^2\le b_{n-1}b_{n+1},\quad n\ge n_0,
\end{equation}
or
\begin{equation}\label{eq:logconcave}
\mbox{log-concavity:}\quad\quad b_n^2\ge b_{n-1}b_{n+1},\quad n\ge n_0.
\end{equation}
Then $(b_n)$ is eventually strictly increasing to infinity. 
\end{lem}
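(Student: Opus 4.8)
The plan is to treat the two cases separately, starting with log-convexity. Suppose \eqref{eq:logconvex} holds, i.e. $b_n^2 \le b_{n-1}b_{n+1}$ for $n \ge n_0$, which says the ratios $q_n := b_n/b_{n-1}$ are non-decreasing for $n \ge n_0$. First I would argue that $q_n > 1$ eventually: if instead $q_n \le 1$ for all $n \ge n_0$, then monotonicity of the ratios forces $q_n \to L \le 1$, so $b_n$ would be eventually non-increasing, hence bounded, contradicting \eqref{eq:conv}. So there is some $n_1 \ge n_0$ with $q_{n_1} = c > 1$; by monotonicity $q_n \ge c$ for all $n \ge n_1$, whence $b_n \ge b_{n_1} c^{\,n-n_1} \to \infty$, and $(b_n)$ is strictly increasing from index $n_1$ on. This case is essentially immediate.

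The log-concave case \eqref{eq:logconcave} is the one Berezanski\u{\i} treated and is the more delicate one; here the ratios $q_n = b_n/b_{n-1}$ are non-increasing for $n \ge n_0$, so $q_n \downarrow L \ge 0$. The point is to rule out $L < 1$. If $L > 1$ we are done as above, since $q_n \ge L > 1$ gives geometric growth. If $L = 1$, the sequence $(b_n)$ is eventually non-decreasing (each $q_n \ge L = 1$... but wait, $q_n \downarrow 1$ only gives $q_n \ge 1$, so $b_n$ is non-decreasing from $n_0$); one must still show it is \emph{strictly} increasing and tends to infinity. Strict increase: since $b_n^2 \ge b_{n-1}b_{n+1}$ and the $b_n$ are non-decreasing, if $b_{n} = b_{n-1}$ for some $n$ in this range then $b_{n+1} \le b_n^2/b_{n-1} = b_n$, forcing $b_{n+1} = b_n$, and by induction $b_m = b_{n-1}$ for all $m \ge n-1$, contradicting \eqref{eq:conv}; so the increase is strict past $n_0$. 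That $b_n \to \infty$ in this sub-case follows from strict monotonicity of an integer-free sequence only via \eqref{eq:conv}: a bounded monotone sequence has a finite supremum, contradicting $\sup_n b_n = \infty$. The remaining case is $L = 0$, i.e. $q_n \to 0$; then $b_n = b_{n_0}\prod_{k=n_0+1}^n q_k \to 0$, so $(b_n)$ is bounded, again contradicting \eqref{eq:conv}. Hence $L \ge 1$ and in fact one of the two resolved sub-cases applies.

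The main obstacle, such as it is, lies in the log-concave case when $L = 1$: here monotonicity of the ratios alone does not deliver growth to infinity, and one genuinely needs to invoke the unboundedness hypothesis \eqref{eq:conv} to conclude, together with the log-concavity inequality itself to upgrade "non-decreasing" to "strictly increasing" and to exclude the sequence stabilizing at a finite value. I would organize the write-up around the trichotomy $L>1$, $L=1$, $L<1$ for $L = \lim_n b_n/b_{n-1}$ (the limit existing by monotonicity of the ratios in each regularity case, with the convention that in the log-convex case the monotone ratios may increase to $L \in (1,\infty]$ and we only need $L > 1$), noting that the log-convex and log-concave cases are handled by the same trichotomy with the roles of "eventually $\ge c>1$" versus "$\downarrow L$" interchanged. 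No estimate here is more than a line, so I would not belabor the computations.
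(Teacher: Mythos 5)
Your argument is correct and follows essentially the same route as the paper's proof: use monotonicity of the ratios $b_n/b_{n-1}$, rule out a limit $\le 1$ (log-convex case) resp. $<1$ (log-concave case) by contradicting \eqref{eq:conv}, and in the log-concave case upgrade non-decreasing to strictly increasing by noting that an equality $b_n=b_{n-1}$ propagates forward under \eqref{eq:logconcave}, making the sequence eventually constant. The only cosmetic slip is the phrase ``the remaining case is $L=0$'' --- the remaining case is $0\le L<1$ --- but your geometric-decay argument covers that whole range, consistent with the trichotomy $L>1$, $L=1$, $L<1$ you state at the end.
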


\begin{proof} Suppose first that \eqref{eq:logconvex} holds. For $n\ge n_0$, $b_{n+1}/b_n$ is increasing, say to
$\lambda\le \infty$. If $\lambda\le 1$, then $b_n$ is decreasing for
$n\ge n_0$ in contradiction to \eqref{eq:conv}. Therefore
$1<\lambda\le\infty$ and for any $1<\lambda_0<\lambda$ we have
$b_{n+1}\ge \lambda_0 b_n$ for $n$ sufficiently large.

If \eqref{eq:logconcave} holds, then $b_{n+1}/b_n$ is decreasing for $n\ge
n_0$, say to $\lambda\ge 0$. If $\lambda<1$ then $\sum b_n<\infty$ in
contradiction to \eqref{eq:conv}. Therefore $\lambda\ge 1$ and finally
$b_{n+1}\ge b_n$ for $n\ge n_0$. If $b_n=b_{n-1}$ for some $n>n_0$, then 
\eqref{eq:logconcave} implies $b_n\ge b_{n+1}$, hence $b_n=b_{n+1}$, so $(b_n)$ is eventually constant in contradiction to
\eqref{eq:conv}.
\end{proof}

\begin{thm}[Berezanski\u{\i}]
\label{thm:ber}
Assume that the coefficients of \eqref{eq:3t}  satisfy
\begin{equation}\label{eq:finalber}
\sum_{n=1}^\infty \frac{1+|a_n|}{\sqrt{b_nb_{n-1}}}<\infty,
\end{equation}
and that either \eqref{eq:logconvex} or \eqref{eq:logconcave} holds.
 \footnote{In \cite{Ak} it is assumed
    that $|a_n|\le M$, $\sum 1/b_n<\infty$ and that
    \eqref{eq:logconcave} holds. The assertion \eqref{eq:mino} is not discussed.}

For any non-trivial solution $(r_n)$ of \eqref{eq:3t} there exists a constant $c$, depending on the $a_n,b_n$ and the initial conditions
 $(r_0,r_{-1})\neq(0,0)$
 but independent of $z$,
 such that
\begin{equation}\label{eq:berez}
\sqrt{b_{n-1}}|r_n(z)|\le c\;\Pi(|z|),\quad \Pi(z)=\prod_{k=0}^\infty\left (1+
  \frac{z}{b_{k-1}}\right ),\quad n\ge 0,
\end{equation}
and there exists a constant $K_z > 0$ for $z\in\mathbb C$ such that
\begin{equation}\label{eq:mino}
\max\{|r_n(z)|,|r_{n+1}(z)|\} \ge \frac{K_z}{\sqrt{b_{n+1}}},\quad n\ge 0.
\end{equation}
In particular,
\begin{equation}\label{eq:spec}
P_n^2(0),Q_n^2(0)=O(1/b_{n-1})
\end{equation}
and
\begin{equation}\label{eq:spec*}
 \frac{K}{b_{n+1}}\le |r_n(z)|^2+|r_{n+1}(z)|^2 \le  \frac{L}{b_{n-1}}
\end{equation}
for  suitable constants $K,L$ depending on $z$.

The moment problem is indeterminate.
\end{thm}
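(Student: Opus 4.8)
The plan is to follow Berezanski\u{\i}'s classical scheme, which rewrites the three term recurrence \eqref{eq:3t} as a discrete Volterra-type equation and estimates the solution by iteration. First I would set $u_n=\sqrt{b_{n-1}}\,r_n(z)$ (for fixed $z$), and rewrite \eqref{eq:3t} as a difference equation for $u_n$. Using Lemma~\ref{thm:inc} we may assume that $(b_n)$ is strictly increasing to infinity for $n\ge n_0$; this makes the weights $1/\sqrt{b_nb_{n-1}}$ summable by hypothesis \eqref{eq:finalber}, which is exactly what is needed to run the iteration. The idea is that \eqref{eq:3t}, divided by an appropriate normalizing factor, becomes $u_{n+1}=u_n+(\text{small coefficient})\cdot(z\,u_n-a_nu_n)+(\text{small})\cdot u_{n-1}$, or after summing, an identity expressing $u_{n+1}$ (or a suitable combination $u_{n+1}\pm u_n$) as a bounded boundary term plus a sum $\sum_{k\le n}\varepsilon_k\,(\text{something})(u_k+|z|u_k)$ with $\sum\varepsilon_k<\infty$.

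Second, for the \emph{upper bound} \eqref{eq:berez} I would apply the discrete Gr\"onwall inequality to the resulting Volterra equation. Writing $V_n=\max_{0\le k\le n}|u_k|$, one gets an inequality of the form $V_n\le c_1 + c_2(1+|z|)\sum_{k=0}^{n}\frac{1}{\sqrt{b_kb_{k-1}}}V_{k}$, but one has to be a little careful: the factor $|z|$ should be distributed so that iteration produces the product $\Pi(|z|)=\prod_k(1+|z|/b_{k-1})$ rather than an exponential $\exp(|z|\sum 1/\sqrt{b_kb_{k-1}})$. The standard trick here is to use the telescoping bound $\prod_{k}(1+x_k)\ge 1+\sum x_k$ and to organize the iteration so that at step $k$ one gains precisely a factor $(1+|z|/b_{k-1})$ times a harmless constant from the $a_k$ term; the summability of $\sum(1+|a_k|)/\sqrt{b_kb_{k-1}}$ then guarantees the constant part converges. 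This yields $|u_n(z)|\le c\,\Pi(|z|)$ with $c$ depending on the initial data $(r_0,r_{-1})$ and on the coefficients but not on $z$, which is \eqref{eq:berez}; applying it to the specific solutions $P_n,Q_n$ at $z=0$ gives \eqref{eq:spec} since $\Pi(0)=1$.

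Third, for the \emph{lower bound} \eqref{eq:mino} I would again use the Volterra reformulation, now run in the direction of increasing $n$, to show that the pair $(u_n,u_{n+1})$ cannot both be small: one shows that $\max\{|u_n|,|u_{n+1}|\}$ is bounded below by a positive constant $K_z$ for all $n$. The mechanism is that the "transfer" from $(u_{n-1},u_n)$ to $(u_n,u_{n+1})$ is, up to a summably-small perturbation, close to a rotation/shift that preserves a quantity bounded away from $0$ (for a nontrivial solution this quantity is positive at the start), so the perturbation series converges and the quantity stays bounded below. Translating back via $u_n=\sqrt{b_{n-1}}r_n$ and using $b_{n-1}\le b_{n+1}$ gives $\max\{|r_n(z)|,|r_{n+1}(z)|\}\ge K_z/\sqrt{b_{n+1}}$. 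Combining \eqref{eq:berez} and \eqref{eq:mino}, squaring and using monotonicity of $(b_n)$ yields \eqref{eq:spec*} with the stated two-sided bound, and \eqref{eq:spec} follows as the $z=0$ case. Finally, indeterminacy: \eqref{eq:berez} applied to $P_n$ and $Q_n$ gives $P_n^2(0),Q_n^2(0)=O(1/b_{n-1})$, and since \eqref{eq:finalber} forces $\sum 1/b_{n-1}<\infty$ (indeed $1/b_{n-1}\le 1/\sqrt{b_nb_{n-1}}$ eventually once $(b_n)$ is increasing), we get $\sum(P_n^2(0)+Q_n^2(0))<\infty$, which is condition (i) of Theorem~\ref{thm:indet}; hence the moment problem is indeterminate.

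The main obstacle I expect is the bookkeeping in the Gr\"onwall step that produces the \emph{product} $\Pi(|z|)$ with a $z$-independent prefactor, rather than a cruder exponential bound: one must peel off the $|z|$-dependence at each iteration step and verify that the residual constants, governed by $\sum(1+|a_n|)/\sqrt{b_nb_{n-1}}$, multiply up to something finite. The log-convex and log-concave cases are handled uniformly once Lemma~\ref{thm:inc} is invoked, so the regularity hypothesis \eqref{eq:logconvex} or \eqref{eq:logconcave} enters only through the eventual monotonicity of $(b_n)$.
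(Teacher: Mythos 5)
There is a genuine gap, and it is located exactly where you dismiss the hypothesis: your closing claim that ``the log-convex and log-concave cases are handled uniformly once Lemma~\ref{thm:inc} is invoked, so the regularity hypothesis \eqref{eq:logconvex} or \eqref{eq:logconcave} enters only through the eventual monotonicity of $(b_n)$'' is false, and no Gr\"onwall/Volterra iteration based only on monotonicity and \eqref{eq:finalber} can prove the theorem. Indeed, the conclusion itself fails under those weaker assumptions: as in Remark~\ref{thm:rem31}, take $b_{2n}=\beta_n$, $b_{2n+1}=\beta_n(1+\delta_n)$ with $\beta_n$ increasing, $\sum 1/\beta_n<\infty$ and $\delta_n>0$ small and summable, and $a_n=0$. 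Then $(b_n)$ is strictly increasing, \eqref{eq:finalber} holds, yet $P_{2n}(0)$ stays bounded away from $0$, so the moment problem is determinate and \eqref{eq:berez}, \eqref{eq:spec} fail. So any correct proof must use log-convexity or log-concavity beyond Lemma~\ref{thm:inc}, and in the paper it does: with $u_n=\sqrt{b_{n-1}}\,|r_n(z)|$ the recurrence gives $u_{n+1}\le \varepsilon_n u_n+\frac{b_{n-1}}{\sqrt{b_{n-2}b_n}}\,u_{n-1}$ with $\varepsilon_n=\frac{|z|+|a_n|}{\sqrt{b_nb_{n-1}}}$, and the step $u_{n+1}\le(1+\varepsilon_n)\max(u_n,u_{n-1})$, which is what makes the iteration close and produce $\Pi(|z|)$, requires $\frac{b_{n-1}}{\sqrt{b_{n-2}b_n}}\le 1$, i.e.\ \emph{log-convexity}; monotonicity alone lets this coefficient oscillate above $1$ and the product of the offending factors need not be controlled. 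In the log-concave case the paper must switch the normalization to $u_n=\sqrt{b_n}\,|r_n(z)|$, where the coefficient becomes $\frac{\sqrt{b_{n-1}b_{n+1}}}{b_n}\le 1$. Your single-normalization, ``uniform'' scheme therefore does not go through.

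The same defect affects your lower bound. The paper's proof of \eqref{eq:mino} is not a perturbation of a norm-preserving transfer map (there is no conserved quantity here, and for real $z$, in particular $z=0$, the Christoffel--Darboux argument of Remark~\ref{thm:remlowerbound} is unavailable); it rests on the reverse inequality $u_{n+1}\ge u_{n-1}-\varepsilon_n u_n$, which needs the coefficient of $u_{n-1}$ to be $\ge 1$ — i.e.\ log-\emph{concavity} for the normalization $\sqrt{b_{n-1}}$ and log-\emph{convexity} for the normalization $\sqrt{b_n}$ — followed by $v_{n+1}\ge(1-\varepsilon_n)v_n$ and $\prod(1-\varepsilon_n)>0$. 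So the upper and lower bounds use the two normalizations in opposite pairings with the two convexity hypotheses, which is precisely the structure your proposal omits. Your outline of the upper bound (distributing $|z|$ so as to obtain the product $\Pi(|z|)$, using $\frac{|z|}{\sqrt{b_nb_{n-1}}}\le\frac{|z|}{b_{n-1}}$, and making the prefactor $z$-independent by dividing out the first finitely many factors of $\Pi$) is consistent with the paper, but without the convexity/concavity input at the step indicated above the argument, as proposed, does not prove the statement.
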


\begin{proof} By Lemma~\ref{thm:inc} we have $b_{n-1}< b_n$ for
  $n\ge n_1 > n_0$.
  
By the recurrence relation we get

\begin{eqnarray}\label{eq:double}
\frac{b_{n-1}}{b_n}|r_{n-1}(z)|-\frac{|z|+|a_n|}{b_n}|r_n(z)|\le
|r_{n+1}(z)|\le \nonumber\\
\frac{b_{n-1}}{b_n}|r_{n-1}(z)| + \frac{|z|+|a_n|}{b_n}|r_n(z)|.
\end{eqnarray}

Let
$$
u_n=\sqrt{b_{n-1}}|r_n(z)|,\quad v_n=\max(u_n,u_{n-1}),\quad \varepsilon_n=\frac{|z|+|a_n|}{\sqrt{b_nb_{n-1}}}.
$$
Since $(r_0,r_{-1})\neq(0,0)$ we have $v_n>0$ for $n\ge 1$, and by assumption $\varepsilon_n<1$ for $n$ sufficiently large depending on $z$, say for 
$n\ge n_z \ge n_1$.

From the second inequality in \eqref{eq:double} we  then get 
$$
u_{n+1}\le \frac{b_{n-1}}{\sqrt{b_nb_{n-2}}}u_{n-1} +  \varepsilon_n u_n \le v_n(1+\varepsilon_n), 
$$
where the last inequality requires log-convexity, assumed for $n\ge n_0$. For $n\ge n_1$ we then get
$$
v_{n+1}  \le (1+\varepsilon_n)v_n \le \left(1+\frac{|a_n|}{\sqrt{b_nb_{n-1}}}\right)\left(1+\frac{|z|}{b_{n-1}}\right)v_n.
$$

Therefore

$$
v_{n_1+n}(z)\le \prod_{k=n_1}^\infty\left(1+ \frac{|a_k|}{\sqrt{b_kb_{k-1}}}\right )
\prod_{k=n_1}^\infty\left (1+ \frac{|z|}{b_{k-1}}\right )
v_{n_1}(z),\quad n\ge 1,
$$
 and since
$$
v_{n_1}(z)\prod_{k=0}^{n_1-1}\left(1+|z|/b_{k-1}\right)^{-1}
$$
is bounded in the complex plane, we get \eqref{eq:berez} for $n>n_1$, hence for all $n$ by modifying the constant. (Remember that $b_{-1}:=1$.)

From the first inequality in \eqref{eq:double} we get for $n\ge n_z$ now using log-concavity
\begin{equation}\label{eq:down}
u_{n+1}\ge \frac{b_{n-1}}{\sqrt{b_nb_{n-2}}}u_{n-1}-\varepsilon_n u_n\ge u_{n-1}-\varepsilon_n u_n.
\end{equation}
We claim that
$$
v_{n+1}\ge (1-\varepsilon_n)v_{n},\quad n\ge n_z.
$$
This is clear if $v_n=u_n$, and if $v_n=u_{n-1}$, then $u_{n-1}\ge u_n$ so \eqref{eq:down} gives $v_{n+1}\ge u_{n+1}\ge (1-\varepsilon_n)u_{n-1}$.
For $n>n_z$ we then get
$$
v_n\ge v_{n_z}\prod_{k=n_z}^\infty (1-\varepsilon_k)>0,
$$
hence $d:=\inf_{n\ge 1}v_n>0$. Therefore either $\sqrt{b_n}|r_{n+1}(z)|\ge d$ or $\sqrt{b_{n-1}}|r_n(z)|\ge d$, which shows \eqref{eq:mino}
(even with the denominator $\sqrt{b_n}$).

We still have to prove the inequalities \eqref{eq:berez} and \eqref{eq:mino} when the assumptions of log-convexity and log-concavity are interchanged. 
To do so we change the  definition of $u_n$ to 
$u_n=\sqrt{b_n}|r_n(z)|$, and we  get from the second inequality in \eqref{eq:double}
$$
u_{n+1}\le \frac{\sqrt{b_{n-1}b_{n+1}}}{b_n}\left(u_{n-1} +\varepsilon_nu_n \right) \le 
v_n(1+\varepsilon_n), 
$$
where the last inequality requires log-concavity, assumed for $n\ge n_0$. Therefore $v_{n+1}  \le (1+\varepsilon_n)v_n$, and \eqref{eq:berez}
 follows as above.

From the first inequality in \eqref{eq:double} we similarly get
$$
u_{n+1}\ge  \frac{\sqrt{b_{n-1}b_{n+1}}}{b_n}\left(u_{n-1}-\varepsilon_nu_n\right).
$$
We now claim that in the log-convex case 
$$
v_{n+1}\ge (1-\varepsilon_n)v_{n},\quad n\ge n_z,
$$
where $n\ge n_z$ implies $\varepsilon_n<1$. This is clear if $v_n=u_n$, and if $v_n=u_{n-1}$ we have 
$u_{n-1}\ge u_n$, hence $u_{n-1}-\varepsilon_nu_n\ge (1-\varepsilon_n)u_{n-1} \ge 0$.

The proof is finished as in the first case.

From \eqref{eq:berez} we get for $z=0$ with $r_n=P_n$ and $r_n=Q_n$
that  \eqref{eq:spec} holds, and this implies indeterminacy by Theorem~\ref{thm:indet}. Finally, \eqref{eq:spec*} is obtained by combining
\eqref{eq:berez}   and \eqref{eq:mino}.
\end{proof}

\begin{rem}\label{thm:remlowerbound}
{\rm The lower bound \eqref{eq:mino} for non-real $z$ can be obtained
 differently based on the Christoffel-Darboux formula, cf. \cite[p.9]{Ak}, 
$$
({\rm Im}\,z) \, \sum_{k=0}^{n-1}|P_k(z)|^2=b_{n-1}{\rm Im}\,
[P_n(z)\overline{P_{n-1}(z)}].
$$
Hence 
$$
\frac{|{\rm Im}\,z|}{b_{n-1}} \le |P_{n-1}(z)|\,|P_n(z)|,\quad n\ge 1.
$$
Similarly, we can get the same inequality with $Q_n$ in place of $P_n$.
So far we do not need any extra assumptions on the coefficients in
the recurrence relation.

If we know that $r_n(z)$ is bounded above by $c\,\Pi(|z|)/\sqrt{b_{n-1}}$ for any solution of the recurrence relation, we immediately get 
$$
|P_{n}(z)|\ge \frac{|{\rm Im}\,z|}{c\,\Pi(|z|)\sqrt{b_{n}}} .
$$
The same is true for $Q_n$ in place of $P_n$.
}
\end{rem}

\begin{cor}\label{thm:O1n} Under the assumptions of Theorem~\ref{thm:ber} we have
$$
1/b_n,P_n^2(0),Q_n^2(0)=o(1/n).
$$
\end{cor}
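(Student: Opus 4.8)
The plan is to reduce the whole statement to the single estimate $1/b_n=o(1/n)$. Indeed, the bounds \eqref{eq:spec} from Theorem~\ref{thm:ber} give $P_n^2(0),Q_n^2(0)=O(1/b_{n-1})$, and once $1/b_n=o(1/n)$ is known we have $1/b_{n-1}=o(1/(n-1))=o(1/n)$ as well (since $n/(n-1)\to 1$), so the claimed statement for $P_n^2(0)$ and $Q_n^2(0)$ is immediate.

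To establish $1/b_n=o(1/n)$ I would combine two facts. First, by Lemma~\ref{thm:inc} the sequence $(b_n)$ is eventually strictly increasing, hence $(1/b_n)$ is eventually strictly decreasing. Second, $\sum_n 1/b_n<\infty$: eventually $b_{n-1}\le b_n$, so $1/b_n\le 1/\sqrt{b_nb_{n-1}}$, and the right-hand side is summable by \eqref{eq:finalber} (alternatively this is just Carleman's condition, valid because the moment problem is indeterminate).

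Now I would invoke the elementary fact that a nonnegative, eventually decreasing sequence $(x_n)$ with $\sum_n x_n<\infty$ satisfies $nx_n\to 0$. The argument is the standard one: for $n$ large, monotonicity gives $\bigl(n-\lfloor n/2\rfloor\bigr)x_n\le \sum_{k=\lfloor n/2\rfloor+1}^{n}x_k$, and the right-hand side tends to $0$ as a tail of a convergent series; since $n-\lfloor n/2\rfloor\ge n/2$, this forces $nx_n\to 0$. Applying this with $x_n=1/b_n$ yields $1/b_n=o(1/n)$, and the corollary follows.

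I do not anticipate any real obstacle here; the proof is a short consequence of Theorem~\ref{thm:ber} together with the monotonicity from Lemma~\ref{thm:inc}. The only point needing a moment's attention is the passage from $O(1/b_{n-1})$ to $o(1/n)$, which is harmless as noted above.
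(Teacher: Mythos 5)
Your proof is correct and follows essentially the same route as the paper: the paper's own argument also deduces $n/b_n\to 0$ from the eventual monotonicity of $(b_n)$ (Lemma~\ref{thm:inc}) together with the convergence of $\sum 1/b_n$, and then transfers this to $P_n^2(0)$ and $Q_n^2(0)$ via \eqref{eq:spec}. You have merely made explicit the standard details (the summability of $1/b_n$ from \eqref{eq:finalber} or Carleman, and the tail argument for $nx_n\to 0$) that the paper leaves implicit.
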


\begin{proof} Since $(b_n)$ is eventually increasing by Lemma~\ref{thm:inc}, we obtain from the convergence of $\sum 1/b_n$
 that $(n/b_n)$ tends to zero. Using \eqref{eq:spec} we see that also
 $(nP_n^2(0))$ and $(nQ_n^2(0))$ tend to zero.
\end{proof}

\begin{rem}\label{thm:rem31} {\rm Note that \eqref{eq:finalber} is a weaker condition than \eqref{eq:origber} because $(b_n)$ is
 eventually increasing. 

By a theorem of Carleman, $\sum
    1/b_n=\infty$ is a sufficient condition for determinacy, and it is
    well-known that there are determinate moment problems for which
    $\sum 1/b_n<\infty$. The
    converse of Carleman's Theorem holds under the additional
 conditions of Theorem~\ref{thm:ber}.

We give next a family of examples of determinate symmetric moment
problems for which $\sum 1/b_n<\infty$. 

In the symmetric case $a_n=0$ for all $n$,
    we have $P_{2n+1}(0)=Q_{2n}(0)=0$, and it follows from
    \eqref{eq:3t} that
$$
P_{2n}(0)=(-1)^n\frac{b_0b_2\cdots b_{2n-2}}{b_1b_3\cdots
  b_{2n-1}},\quad Q_{2n+1}(0)=(-1)^n\frac{b_1b_3\cdots
  b_{2n-1}}{b_0b_2\cdots b_{2n}},
$$
so the moment problem is determinate by Theorem~\ref{thm:indet} if and
only if 
\begin{equation}\label{eq:symsum}
\sum_{n=1}^\infty \left(\frac{b_0b_2\dots b_{2n-2}}{b_1b_3\ldots
  b_{2n-1}}\right)^2 + \left(\frac{b_1b_3\ldots
  b_{2n-1}}{b_0b_2\ldots b_{2n}}\right)^2=\infty.
\end{equation}
If $\beta_n>0$ is arbitrary such that $\sum 1/\beta_n<\infty$, then
defining $b_{2n}=b_{2n+1}=\beta_n$ for $n\ge 0$, we get a symmetric
moment problem which is determinate because of \eqref{eq:symsum} since 
$$
\frac{b_0b_2\cdots b_{2n-2}}{b_1b_3\cdots
  b_{2n-1}}=1.
$$
Clearly $\sum 1/b_n<\infty$ and $(b_n)$ does not satisfy the conditions \eqref{eq:logconvex}
or \eqref{eq:logconcave}.
} 
\end{rem}

\begin{prop}\label{thm:product}
 Let $0<\alpha\le 1$, let
   $(u_n)\in\ell^{\alpha}$ be a sequence of positive numbers and define
$$
K:=\sum_{n=1}^\infty u^\alpha_n.
$$
 Then
  $$\prod_{n=1}^\infty (1+ru_n)\le \exp(\alpha^{-1}Kr^\alpha).$$  
 \end{prop}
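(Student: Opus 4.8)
The plan is to reduce everything to a single elementary one-variable inequality. Taking logarithms, the claimed bound is equivalent to
\[
\sum_{n=1}^\infty \log(1+ru_n)\le \frac{r^\alpha}{\alpha}\sum_{n=1}^\infty u_n^\alpha,
\]
and this follows term by term once we know that
\[
\log(1+x)\le \frac{x^\alpha}{\alpha},\qquad x\ge 0,\ 0<\alpha\le 1,
\]
since applying it with $x=ru_n$ gives $\log(1+ru_n)\le \alpha^{-1}(ru_n)^\alpha=\alpha^{-1}r^\alpha u_n^\alpha$. Summing over $n$ and exponentiating then yields the assertion; note that $(u_n)\in\ell^{\alpha}$ guarantees that the right-hand side is finite, hence that the product on the left converges.

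To prove the displayed one-variable inequality, I would set $f(x)=\alpha^{-1}x^\alpha-\log(1+x)$. Then $f(0)=0$ and $f'(x)=x^{\alpha-1}-(1+x)^{-1}$, which is nonnegative precisely when $x^{\alpha-1}+x^\alpha\ge 1$. For $x\ge 1$ this holds because $x^\alpha\ge 1$, and for $0<x<1$ it holds because $x^{\alpha-1}=(1/x)^{1-\alpha}\ge 1$. Thus $f$ is nondecreasing on $[0,\infty)$, so $f\ge f(0)=0$, which is exactly what is needed.

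Alternatively, and somewhat more quickly, one can bypass the logarithm of the product by invoking the subadditivity inequality $(a+b)^\alpha\le a^\alpha+b^\alpha$, valid for $0<\alpha\le 1$: with $a=1$ and $b=ru_n$ it gives $1+ru_n\le (1+r^\alpha u_n^\alpha)^{1/\alpha}$, whence
\[
\prod_{n=1}^\infty (1+ru_n)\le\left(\prod_{n=1}^\infty (1+r^\alpha u_n^\alpha)\right)^{1/\alpha}\le\left(\exp\left(r^\alpha\sum_{n=1}^\infty u_n^\alpha\right)\right)^{1/\alpha}=\exp\left(\frac{Kr^\alpha}{\alpha}\right),
\]
using $1+t\le e^t$ in the middle step. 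I would probably present this second route in the paper, as it is the shortest.

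The only point requiring any care is the elementary inequality (equivalently, the termwise bound $\log(1+ru_n)\le\alpha^{-1}(ru_n)^\alpha$); everything else is routine summation and exponentiation, so I do not expect a genuine obstacle here.
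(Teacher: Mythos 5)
Your proposal is correct, and your second route (the subadditivity bound $1+ru_n\le(1+r^\alpha u_n^\alpha)^{1/\alpha}$ followed by $1+t\le e^t$) is exactly the paper's own two-line proof; your first, calculus-based verification of $\log(1+x)\le\alpha^{-1}x^\alpha$ is just an equivalent way of checking the same elementary inequality. No issues.
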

 \begin{proof}
 The conclusion follows immediately from the inequalities below
 $$
1+ru_n\le (1+r^\alpha u_n^\alpha)^{\frac{1}{\alpha}}\le \exp(\alpha^{-1}
r^\alpha u_n^\alpha).
$$
  \end{proof}

We shall now prove 
Theorem~\ref{thm:order1intro}, and in order to make the reading easier we repeat the result:

\begin{thm}\label{thm:order1} For a moment problem and  $0<\alpha \le 1$ the following
  conditions are equivalent:
\begin{enumerate}
\item[(i)] $(P_n^2(0)),(Q_n^2(0))\in\ell^{\alpha}$,
\item[(ii)] $(P_n^2(z)),(Q_n^2(z))\in\ell^{\alpha}$ for all
  $z\in\mathbb C$.
\end{enumerate}
If the conditions are satisfied, the moment problem is indeterminate and the two series indicated in (ii) converge
uniformly on compact subsets of $\mathbb C$. Furthermore, $(1/b_n)\in\ell^\alpha$ and
\begin{equation}\label{eq:Porder}
P(z)\le C\exp(K|z|^\alpha),
\end{equation}
where
\begin{equation}\label{eq:CK}
C=\left(\sum_{n=0}^\infty(P_n^2(0)+Q_n^2(0))\right)^{1/2},\quad K=\frac{1}{\alpha}\sum_{n=0}^\infty(|P_n(0)|^{2\alpha}+|Q_n(0)|^{2\alpha}).
\end{equation}

In particular the moment problem has order $\rho\le\alpha$, and if the
order is $\alpha$, then the type $\tau\le K$.
\end{thm}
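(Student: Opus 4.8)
The plan is to prove the equivalence of (i) and (ii) first, then derive the additional conclusions. For the direction (ii) $\Rightarrow$ (i) there is nothing to do: simply put $z=0$. The substantive direction is (i) $\Rightarrow$ (ii), and the natural engine is the Nevanlinna-type relation between $P_n$, $Q_n$ and the $A,B,C,D$ functions, or more elementarily the identity expressing a general solution of \eqref{eq:3t} at an arbitrary $z$ in terms of the solutions at $z=0$. Concretely, I would use the well-known formula (Akhiezer [1.23a], alluded to in Remark~\ref{thm:remaboutalpha})
$$
P_n(z)=P_n(0)+z\sum_{k=0}^{n-1}\bigl(Q_k(0)P_n(0)-P_k(0)Q_n(0)\bigr)P_k(z),
$$
or the cleaner operator-theoretic version: if $J$ denotes the Jacobi matrix, then $P_n(z)-P_n(0)$ is obtained from $(P_k(0))$ by applying the resolvent-like finite sum $z\sum_{k<n}(\cdots)$. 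The key analytic point is that convolution-type and Cauchy-product manipulations of $\ell^\alpha$ sequences behave well for $0<\alpha\le1$: if $(x_n),(y_n)\in\ell^\alpha$ then so is $(x_n+y_n)$, and crucially $\sum_n|\sum_k x_k y_{n-k}|^\alpha\le(\sum|x_k|^\alpha)(\sum|y_k|^\alpha)$ because $|\cdot|^\alpha$ is subadditive. This $\ell^\alpha$-algebra structure (for the quasi-norm raised to the $\alpha$ power) is exactly what lets the finite-but-growing sums defining $P_n(z)$ stay in $\ell^\alpha$ uniformly in $z$ on compacta.

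For the explicit estimate \eqref{eq:Porder}–\eqref{eq:CK}, I would argue as follows. From the three-term recurrence one has the matrix transfer formulation, and the standard trick is to write $\max(|P_n(z)|,|P_{n-1}(z)|)$, telescoped against the transfer matrices, giving a product bound. More precisely, using \eqref{eq:double}-type inequalities (but now without any log-convexity assumption, since we only want an upper bound) one gets
$$
\sqrt{b_{n-1}}\,|P_n(z)|\ \text{and}\ \sqrt{b_{n-1}}\,|Q_n(z)|\ \lesssim\ \prod_{k}(1+|z|\,u_k)
$$
for an appropriate sequence $(u_k)$ built from $(P_k(0)),(Q_k(0))$; alternatively, and more directly matching the stated constants, one expands $P(z)^2=\sum|P_n(z)|^2$ using the Akhiezer formula and the Cauchy–Schwarz/reproducing-kernel inequality $\bigl|\sum_k w_k P_k(z)\bigr|\le\|w\|_2\,P(z)$. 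The cleanest route to the precise $C$ and $K$ in \eqref{eq:CK} is: bound $P(z)\le P(0)\prod_n(1+|z|\,|c_n|)$-style via a Gronwall/iteration on the recurrence where the increments involve $|P_n(0)|^2+|Q_n(0)|^2$, then invoke Proposition~\ref{thm:product} with $u_n=|P_n(0)|^2$ (and $|Q_n(0)|^2$) and $\alpha$ replaced appropriately — note $(|P_n(0)|^2)\in\ell^\alpha$ means $(|P_n(0)|^{2\alpha})$ is summable, which is precisely the exponent appearing in $K$. So Proposition~\ref{thm:product} converts the infinite product into $\exp(\alpha^{-1}K|z|^\alpha)$, and the prefactor $C=(\sum P_n^2(0)+Q_n^2(0))^{1/2}$ emerges as $P(0)$ together with $Q(0)$ from the initial data of the iteration.

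For $(1/b_n)\in\ell^\alpha$: this should follow from $(P_n^2(0)+Q_n^2(0))\in\ell^\alpha$ combined with the lower bound $\max(|P_n(0)|,|P_{n+1}(0)|,|Q_n(0)|,|Q_{n+1}(0)|)\gtrsim 1/\sqrt{b_n}$, which holds with no regularity hypothesis — indeed the Wronskian-type identity $b_n(P_{n+1}(0)Q_n(0)-P_n(0)Q_{n+1}(0))=1$ (constancy of the Wronskian for \eqref{eq:3t}) gives $1\le b_n(|P_{n+1}(0)||Q_n(0)|+|P_n(0)||Q_{n+1}(0)|)\le b_n\bigl(P_n^2(0)+P_{n+1}^2(0)+Q_n^2(0)+Q_{n+1}^2(0)\bigr)$, so $1/b_n\le$ a sum of four terms each in $\ell^\alpha$; since $\ell^\alpha$ is shift-invariant and closed under addition, $(1/b_n)\in\ell^\alpha$. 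The final sentence about order and type is then immediate from \eqref{eq:Porder}: the definitions in Section~\ref{sec-pre} give $\rho_P\le\alpha$ at once, and if $\rho_P=\alpha$ then $\log M_P(r)\le \log C+K r^\alpha$ forces $\tau_P\le K$; since $\rho_P=\rho$ and $\tau_P=\tau$ by the results of \cite{B:P}, we are done.

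I expect the main obstacle to be bookkeeping the constants so that exactly $C$ and $K$ of \eqref{eq:CK} come out, rather than merely $C'\exp(K'|z|^\alpha)$ with unspecified constants — the qualitative bound is routine once one has the $\ell^\alpha$-algebra lemma and Proposition~\ref{thm:product}, but getting the sharp form requires choosing the iteration of the recurrence (or the expansion of $P(z)^2$) carefully, presumably by working with $P(z)$ and $Q(z)$ simultaneously as a coupled system and applying Cauchy–Schwarz in just the right place so that the cross terms assemble into $|P_n(0)|^{2\alpha}+|Q_n(0)|^{2\alpha}$ after Proposition~\ref{thm:product}. A secondary technical point is justifying the uniform convergence on compacta of $\sum_n|P_n(z)|^{2\alpha}$, which follows from a locally uniform version of the same product bound (the bound on $\sqrt{b_{n-1}}\,|P_n(z)|$ is locally bounded in $z$, and $\sum 1/b_{n-1}^\alpha<\infty$ supplies the summable majorant via $|P_n(z)|^{2\alpha}\le (\text{const}(z))/b_{n-1}^\alpha$).
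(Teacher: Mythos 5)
Your main route is essentially the paper's: (ii)$\Rightarrow$(i) is trivial, and for (i)$\Rightarrow$(ii) the paper writes the Nevanlinna matrix with entries $A_n,B_n,C_n,D_n$ as a product of transfer matrices $I+zM_k$ with $\|M_k\|\le P_k^2(0)+Q_k^2(0)$ (Simon's trick), splits the resulting bound as $\prod_k(1+|z|P_k^2(0))\prod_k(1+|z|Q_k^2(0))$, applies Proposition~\ref{thm:product} to get $\sqrt{|B_n(z)|^2+|D_n(z)|^2}\le\exp(\alpha^{-1}K(\alpha)|z|^\alpha)$ with $K(\alpha)$ as in \eqref{eq:Kalpha}, and then uses $P_n(z)=-P_n(0)B_n(z)+Q_n(0)D_n(z)$ (equivalent to Akhiezer's [1.23a]) together with Cauchy--Schwarz; summing $|P_n(z)|^2$ gives exactly the constants $C$ and $K$ of \eqref{eq:CK}, which answers your worry about bookkeeping. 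The termwise consequence $|P_n(z)|^{2\alpha}\le(P_n^2(0)+Q_n^2(0))^{\alpha}e^{2K(\alpha)|z|^\alpha}$ is what yields (ii) and the locally uniform convergence, and the Wronskian identity \eqref{eq:LO} plus subadditivity of $t\mapsto t^\alpha$ gives $(1/b_n)\in\ell^\alpha$, exactly as you sketch (your sign conventions in [1.23a] and in the Wronskian are off, but harmlessly so, and the convolution inequality of your first paragraph is never actually needed).

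There is, however, a genuine flaw in two sub-steps as you state them. A bound of the form $\sqrt{b_{n-1}}\,|P_n(z)|\lesssim\prod_k(1+|z|u_k)$ is \emph{not} available ``without any log-convexity assumption'': to convert the triangle-inequality estimate \eqref{eq:double} into a multiplicative recursion one needs $b_{n-1}^2\le b_nb_{n-2}$ (or the log-concave variant, working with $\sqrt{b_n}|r_n|$) together with summability of $(|z|+|a_n|)/\sqrt{b_nb_{n-1}}$, none of which are hypotheses of Theorem~\ref{thm:order1}; indeed, at $z=0$ your claimed bound would give $P_n^2(0)=O(1/b_{n-1})$, which is precisely \eqref{eq:spec}, a conclusion of Berezanski\u{\i}'s Theorem~\ref{thm:ber} and not of condition (i) alone. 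For the same reason your proposed majorant $|P_n(z)|^{2\alpha}\le \mathrm{const}(z)/b_{n-1}^{\alpha}$ for the locally uniform convergence of $\sum_n|P_n(z)|^{2\alpha}$ is unjustified: the Wronskian only yields the lower bound $2/b_{n-1}\le P_{n-1}^2(0)+P_n^2(0)+Q_{n-1}^2(0)+Q_n^2(0)$, never an upper bound for $P_n^2(0)+Q_n^2(0)$ in terms of $1/b_{n-1}$. The repair is already contained in your own plan: use the majorant $(P_n^2(0)+Q_n^2(0))^{\alpha}e^{2K(\alpha)|z|^\alpha}$ coming from the Cauchy--Schwarz step, which is summable by (i) and uniform on compact sets; this is how the paper argues.
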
 

\begin{proof} Condition (ii) is clearly stronger than condition (i). 

Assume next that (i) holds, and in particular the indeterminate case
occurs because $\ell^\alpha\subseteq \ell^1$.

Following ideas of Simon \cite{Si}, we can write \eqref{eq:ABCD} as
\begin{multline}
\begin{pmatrix}
A_{n+1}(z) & B_{n+1}(z)\\
C_{n+1}(z) & D_{n+1}(z)
\end{pmatrix} =\\
\biggl [ I + z
\begin{pmatrix}
-P_n(0)Q_n(0) & Q_n^2(0)\\
-P_n^2(0) & P_n(0)Q_n(0)
\end{pmatrix}
\biggr] \begin{pmatrix}
A_n(z) & B_n(z)\\
C_n(z) & D_n(z)
\end{pmatrix}.
\end{multline}
and evaluating the operator norm of the
matrices gives
\begin{eqnarray*}\left \| \begin{pmatrix}
A_n(z) & B_n(z)\\
C_n(z) & D_n(z)
\end{pmatrix}\right \| &\le& \prod_{k=0}^{n-1}\left [ 1+ |z|(P_k^2(0)+Q_k^2(0))\right ]
 \\ &\le& \prod_{k=0}^{n-1}\left [ 1+ |z|P_k^2(0)\right ]\   \prod_{k=0}^{n-1}\left [ 1+ |z|Q_k^2(0)\right ].
\end{eqnarray*}
 In particular we have
\begin{equation}\label{simon}
\left.
\begin{array}{l}
\sqrt{|A_n(z)|^2+|C_n(z)|^2}\\
\sqrt{|B_n(z)|^2+|D_n(z)|^2}
\end{array}
\right\}
\le \prod_{k=0}^\infty\left [ 1+ |z|P_k^2(0)\right ]\   \prod_{k=0}^\infty\left [ 1+ |z|Q_k^2(0)\right ].
\end{equation} 
By Proposition~\ref{thm:product} we obtain
\begin{equation}\label{eq:BDalpha}
\left.
\begin{array}{l}
\sqrt{|A_n(z)|^2+|C_n(z)|^2}\\
 \sqrt{|B_n(z)|^2+|D_n(z)|^2}
\end{array}
\right\}
\le \exp(\alpha^{-1}K(\alpha)|z|^\alpha),
\end{equation}
where
\begin{equation}\label{eq:Kalpha}
K(\alpha)=\sum_{k=0}^\infty (|P_k(0)|^{2\alpha}+ |Q_k(0)|^{2\alpha}).
\end{equation}
We also have (\cite[p.14]{Ak})
\begin{equation}\label{eq11} P_n(z)=-P_n(0)B_n(z)+Q_n(0)D_n(z),
\end{equation}
so by the Cauchy-Schwarz inequality
\begin{equation}\label{eq:CS}
|P_n(z)|^2\le (P_n^2(0)+Q_n^2(0))(|B_n(z)|^2+|D_n(z)|^2).
\end{equation}
Combined with \eqref{eq:BDalpha} we get
\begin{equation}\label{eq:Pn2alpha}
|P_n(z)|^{2\alpha}\le
(P_n^2(0)+Q_n^2(0))^\alpha\exp(2K(\alpha)|z|^\alpha),
\end{equation}
which shows that 
$\sum_{n=0}^\infty|P_n(z)|^{2\alpha}$
converges uniformly on compact subsets of $\mathbb C$.

Similarly we have
$$
Q_n(z)=-P_n(0)A_n(z)+Q_n(0)C_n(z),
$$
leading to the estimate
$$
|Q_n(z)|^{2\alpha}\le
(P_n^2(0)+Q_n^2(0))^\alpha\exp(2K(\alpha)|z|^\alpha),
$$
and the assertion  $(Q_n^2(z))\in\ell^{\alpha}$.
By \eqref{eq:CS} and \eqref{eq:BDalpha} we also get
\begin{multline}\label{eq:sz} 
P^2(z)=\sum_{n=0}^\infty |P_n(z)|^2\le \sum_{n=0}^\infty (P_n^2(0)+Q_n^2(0))(|B_n(z)|^2+|D_n(z)|^2) \\
\le \left(\sum_{n=0}^\infty (P_n^2(0)+Q_n^2(0))\right) \exp(2\alpha^{-1}K(\alpha)|z|^\alpha),
\end{multline}
showing \eqref{eq:Porder}, from which we clearly get that
$\rho=\rho_P\le\alpha$, and if $\rho=\alpha$, then
 $\tau=\tau_P\le  K$.

From the well-known formula
\begin{equation}\label{eq:LO}
P_{n-1}(z)Q_n(z)-P_n(z)Q_{n-1}(z)=\frac{1}{b_{n-1}},
\end{equation}
cf. \cite[p. 9]{Ak}, we get
\begin{equation}\label{eq:LO1}
\frac{2}{b_{n-1}}\le |P_{n-1}(z)|^2 + |P_{n}(z)|^2 + |Q_{n-1}(z)|^2 + |Q_{n}(z)|^2,
\end{equation}
hence
$$
\frac{2^\alpha}{b_{n-1}^\alpha}\le |P_{n-1}(z)|^{2\alpha} + |P_{n}(z)|^{2\alpha} + |Q_{n-1}(z)|^{2\alpha} + |Q_{n}(z)|^{2\alpha},
$$
which shows that $(1/b_n)\in\ell^\alpha$.
\end{proof}

We next give an almost converse theorem to Theorem~\ref{thm:order1}, under the
Berezanski\u{\i} assumptions. It is a slight sharpening of Theorem~\ref{thm:beralphaintro} because we have replaced \eqref{eq:origber} by 
\eqref{eq:finalber}.
 
\begin{thm}\label{thm:beralpha} Assume that the coefficients of \eqref{eq:3t}  satisfy 
$$
\sum_{n=1}^\infty \frac{1+|a_n|}{\sqrt{b_nb_{n-1}}}<\infty,
$$
and that either \eqref{eq:logconvex} or \eqref{eq:logconcave} holds.
Assume in addition that $P$ satisfies 
$$
P(z)\le C\exp(K|z|^\alpha)
$$
for some $\alpha$ such that $0<\alpha<1$ and suitable constants $C,K>0$. 
 
Then
\begin{equation}\label{eq:Pnalpha}
1/b_n, P_n^2(0),Q_n^2(0)=O(n^{-1/\alpha}),
\end{equation}
so in particular $(1/b_n),(P_n^2(0)),(Q_n^2(0))\in \ell^{\alpha+\varepsilon}$ for any $\varepsilon>0$.
\end{thm}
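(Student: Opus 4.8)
The plan is to extract, from the hypothesis $P(z)\le C\exp(K|z|^\alpha)$, a lower bound on the product $\Pi(r)=\prod_{k\ge 0}(1+r/b_{k-1})$, and then to convert that into the stated decay rates via the two-sided estimate \eqref{eq:spec*} from Theorem~\ref{thm:ber}. First I would record that Lemma~\ref{thm:inc} forces $(b_n)$ to be eventually strictly increasing to infinity, so the function $\Pi$ is a genuine entire function of one complex variable whose zeros are (essentially) the points $-b_{k-1}$. The key link is the lower bound \eqref{eq:mino}: taking $z$ real, say $z=r>0$, and combining \eqref{eq:mino} with the definition of $P$ gives $P(r)^2\ge |P_n(r)|^2+|P_{n+1}(r)|^2\ge K_r/b_{n+1}$ for every $n$, but more usefully I would run the estimate the other way. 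From \eqref{eq:berez} applied to both $P_n$ and $Q_n$ one has $P_n^2(r),Q_n^2(r)=O(\Pi(r)^2/b_{n-1})$, hence $P(r)^2=\sum_n P_n^2(r)\le c\,\Pi(r)^2\sum_n 1/b_{n-1}$, i.e. $P(r)\le c'\,\Pi(r)$; conversely, from \eqref{eq:LO1} (valid for all $z$) one gets $2/b_{n-1}\le 2P(r)^2+2Q(r)^2$, and one expects $P$, $Q$, and $\Pi$ to be comparable up to the usual order/type bookkeeping. The cleanest route is: the order of $\Pi$ equals the order of the moment problem (this is essentially the content of Theorem~\ref{thm:ber} combined with the estimates of Section~\ref{sec-phi}, or can be seen directly), so the hypothesis on $P$ gives $M_\Pi(r)\le C'\exp(K'r^\alpha)$ for suitable constants.

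Next I would translate the growth bound on $\Pi$ into a statement about the convergence exponent of the sequence $(b_n)$. Since $\Pi(z)=\prod_{k\ge 0}(1+z/b_{k-1})$ is a canonical product of genus $0$ with zeros at $-b_{k-1}$, its order equals the convergence exponent of $(b_{k-1})$, i.e. the infimum of $\beta>0$ with $\sum 1/b_k^\beta<\infty$. The estimate $\log M_\Pi(r)\le_{\text{\tiny as}} K'r^\alpha$ thus forces $\sum 1/b_k^{\alpha+\varepsilon}<\infty$ for every $\varepsilon>0$. Because $(b_n)$ is eventually increasing, a standard Abel/Cauchy-condensation argument upgrades the convergence of $\sum 1/b_k^{\alpha+\varepsilon}$ to the pointwise bound $1/b_n=O(n^{-1/(\alpha+\varepsilon)})$ for each $\varepsilon>0$; to get the sharp $1/b_n=O(n^{-1/\alpha})$ (without the $\varepsilon$) I would instead argue directly from the growth of $\Pi$ evaluated at $r=b_{n-1}$: since $\Pi(b_{n-1})\ge\prod_{k=0}^{n-1}(1+b_{n-1}/b_{k-1})\ge 2^{n}$ (using $b_{k-1}\le b_{n-1}$ for $k\le n$ once $n$ is past the index where monotonicity kicks in), the bound $\Pi(b_{n-1})\le C'\exp(K'b_{n-1}^\alpha)$ yields $2^n\le C'\exp(K'b_{n-1}^\alpha)$, hence $b_{n-1}^\alpha\ge c\,n$, i.e. $b_{n-1}\ge c'n^{1/\alpha}$, which is exactly $1/b_n=O(n^{-1/\alpha})$.

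Finally, the bounds on $P_n^2(0)$ and $Q_n^2(0)$ follow immediately: \eqref{eq:spec} of Theorem~\ref{thm:ber} gives $P_n^2(0),Q_n^2(0)=O(1/b_{n-1})$, and we have just shown $1/b_{n-1}=O(n^{-1/\alpha})$; since $n^{-1/\alpha}$ and $(n-1)^{-1/\alpha}$ are comparable, this gives \eqref{eq:Pnalpha}. The membership $(1/b_n),(P_n^2(0)),(Q_n^2(0))\in\ell^{\alpha+\varepsilon}$ is then just the observation that $\sum_n n^{-(1/\alpha)(\alpha+\varepsilon)}=\sum_n n^{-1-\varepsilon/\alpha}<\infty$.

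The main obstacle I anticipate is the first step — making precise and rigorous the comparison between the growth of $P$ and the growth of $\Pi$, i.e. establishing that the order (and, if one wants sharp constants, the relevant growth bound) of the canonical product $\Pi$ is controlled by the hypothesis on $P$. The inequality $P(r)\le c'\Pi(r)$ comes for free from \eqref{eq:berez}, which gives one direction; the other direction (that $\Pi$ cannot grow much faster than $P$) is the delicate point, but it is precisely here that the regularity hypotheses \eqref{eq:logconvex}/\eqref{eq:logconcave} and the growth condition \eqref{eq:finalber} do their work, via the lower estimate \eqref{eq:mino}/\eqref{eq:spec*}: summing $1/b_{n+1}\le |r_n(z)|^2+|r_{n+1}(z)|^2$ over a suitable solution shows the $b_n$ cannot be too large, which is the quantitative form of "$\Pi$ grows no faster than $P$." Once that comparison is in hand, the rest is the elementary canonical-product/monotonicity bookkeeping sketched above.
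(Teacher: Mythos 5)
Your overall architecture (bound the canonical product $\Pi$, evaluate at $r=b_{n-1}$ to get $b_{n-1}\gtrsim n^{1/\alpha}$, then finish with \eqref{eq:spec}) would work if the first step were available, but that first step --- deducing $M_\Pi(r)\le C'\exp(K'r^\alpha)$ from the hypothesis $P(z)\le C\exp(K|z|^\alpha)$ --- is precisely what is missing, and neither mechanism you offer delivers it. Appealing to ``the order of $\Pi$ equals the order of the moment problem'' is circular here: in the paper that equality (Theorem~\ref{thm:beralpha1}) is deduced \emph{from} the theorem you are proving; and in any case equality of orders only yields $M_\Pi(r)\le_{\text{\tiny as}}\exp(r^{\alpha+\varepsilon})$, which through your own computation at $r=b_{n-1}$ gives only $1/b_n=O(n^{-1/(\alpha+\varepsilon)})$, not the sharp $O(n^{-1/\alpha})$ of \eqref{eq:Pnalpha}. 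The alternative route through \eqref{eq:mino} and \eqref{eq:spec*} also fails as stated: the constants there arise in the proof of Theorem~\ref{thm:ber} from $v_{n_z}\prod_{k\ge n_z}(1-\varepsilon_k)$, where $n_z\to\infty$ as $|z|\to\infty$, so they are uncontrolled in $|z|$; moreover, summing $K_z/b_{n+1}\le|r_n(z)|^2+|r_{n+1}(z)|^2$ only gives $K_z\sum 1/b_n\le 2P(z)^2$, a statement about the fixed convergent series $\sum 1/b_n$, not an upper bound on $\Pi(|z|)$ --- for that you would need a quantitative bound on the counting function of the $b_k$ below $r$, which is exactly what is at stake.

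The paper bypasses $\Pi$ entirely and extracts the needed quantitative information from the leading coefficients: by \eqref{eq:lead} and the eventual monotonicity of $(b_n)$ (Lemma~\ref{thm:inc}) one has $c_n\ge b_{n,n}=1/(b_0\cdots b_{n-1})\ge A^{-1/2}b_{n-1}^{-n}$ for $n\ge n_1$, so Parseval's identity \eqref{eq:ckP} converts the hypothesis on $P$ into $(r/b_{n-1})^{2n}\le AC^2\exp(2Kr^\alpha)$ for all $r>0$; taking $2n$-th roots and choosing $r=n^{1/\alpha}$ gives $1/b_{n-1}=O(n^{-1/\alpha})$ directly, and \eqref{eq:spec} finishes. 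To repair your argument you would essentially need an analogue of this step (a lower bound for the growth of $P$ of the form $r^nb_{n,n}$ up to constants), at which point the detour through the canonical product becomes unnecessary; the elementary endgame you describe ($\Pi(b_{n-1})\ge 2^{n-O(1)}$, the comparability of $n^{-1/\alpha}$ and $(n-1)^{-1/\alpha}$, and the $\ell^{\alpha+\varepsilon}$ conclusion) is fine.
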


\begin{proof} Using that $b_{n-1} < b_n$ for $n\ge n_1$, we get
  $b:=\min\{b_k\}>0$. For $n\ge n_1$ we find
\begin{equation}\label{eq:consBer}
\frac{1}{b_{n-1}^{2n}}\le \frac{1}{b^{2n_1}b_{n-1}^{2(n-n_1)}}\le
Ab_{n,n}^2\le Ac_n^2,
\end{equation}
where we have used \eqref{eq:lead},  \eqref{eq:cn} and
$$
A=\left(\frac{b_0\cdots b_{n_1-1}}{b^{n_1}}\right)^2.
$$
Next, \eqref{eq:ckP} leads to
$$
\sum_{n=n_1}^\infty   \left (\frac{r}{b_{n-1}}\right )^{2n}\le A\sum_{n=0}^\infty c_n^2 r^{2n}
= \frac{A}{2\pi}\int_0^{2\pi}P^2(re^{it})\,dt \le
AC^2\exp[2Kr^\alpha].
$$
Therefore, for any $n\ge n_1,r>0$
\begin{equation}\label{calc}
 \frac{r}{b_{n-1}}\le (AC^2)^{1/2n}\exp[Kr^\alpha/n].
\end{equation}
For $r=n^{1/ \alpha}$ we obtain 
$$
\frac{1}{b_{n-1}}=O(n^{-1/\alpha}),\;n\to\infty.
$$
Now in view of \eqref{eq:spec} we get \eqref{eq:Pnalpha}. 
\end{proof}

\begin{defn}\label{thm:convexpo} {\rm For a sequence  $(z_n)$ of
    complex numbers for which $|z_n|\to\infty$, we introduce the {\it exponent of
      convergence}
$$
\mathcal E(z_n)=\inf \left\{\alpha>0 \mid \sum_{n=n^*}^\infty
\frac{1}{|z_n|^\alpha}<\infty \right\},
$$
where $n^*\in\mathbb N$ is such that $|z_n|>0$ for $n\ge n^*$. 

The counting function of $(z_n)$ is defined as
$$
n(r)=\#\{n \mid |z_n|\le r\}.
$$
}
\end{defn}

The following result is well-known, cf. \cite{BOAS},\cite{Le}.

\begin{lem}\label{thm:expc=or} 
$$
\mathcal E(z_n)=\limsup_{r\to\infty}\frac{\log n(r)}{\log r}.
$$
\end{lem}

\begin{thm}\label{thm:beralpha1} Assume that the coefficients of \eqref{eq:3t}  satisfy 
$$
\sum_{n=1}^\infty \frac{1+|a_n|}{\sqrt{b_nb_{n-1}}}<\infty,
$$
and that either \eqref{eq:logconvex} or \eqref{eq:logconcave} holds. 

Then the order  $\rho$ of the moment problem is given by $\rho=\mathcal
E(b_n)$.
\end{thm}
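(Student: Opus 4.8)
The plan is to establish the two inequalities $\rho\le\mathcal E(b_n)$ and $\mathcal E(b_n)\le\rho$ separately, each by combining one of the two preceding converse-type results with the elementary fact that both quantities lie in $[0,1]$. Indeed, the order of any moment problem satisfies $\rho\le 1$ (Marcel Riesz), and the hypotheses here are exactly those of Theorem~\ref{thm:ber}, so the moment problem is indeterminate and hence $\sum 1/b_n<\infty$ by Carleman's theorem, giving $\mathcal E(b_n)\le 1$. These trivial bounds will take care of the boundary cases in which the cited theorems, stated for $0<\alpha\le 1$ respectively $0<\alpha<1$, do not directly apply.

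For $\rho\le\mathcal E(b_n)$ I would proceed as follows. If $\mathcal E(b_n)=1$ there is nothing to prove, so assume $\mathcal E(b_n)<1$ and fix $\alpha$ with $\mathcal E(b_n)<\alpha\le 1$. By definition of the exponent of convergence $\sum 1/b_n^\alpha<\infty$, i.e. $(1/b_n)\in\ell^\alpha$. The estimate \eqref{eq:spec} from Theorem~\ref{thm:ber} gives $P_n^2(0),Q_n^2(0)=O(1/b_{n-1})$, and since shifting the index is immaterial for summability, $(P_n^2(0)),(Q_n^2(0))\in\ell^\alpha$. Now Theorem~\ref{thm:order1} applies and yields $\rho\le\alpha$. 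Letting $\alpha\downarrow\mathcal E(b_n)$ gives $\rho\le\mathcal E(b_n)$.

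For the reverse inequality I would invoke Theorem~\ref{thm:beralpha}. If $\rho=1$ there is nothing to prove, so assume $\rho<1$ and fix $\alpha$ with $\rho<\alpha<1$; such $\alpha$ exist and may be taken arbitrarily close to $\rho$ (in particular arbitrarily small when $\rho=0$). Since $\rho_P=\rho<\alpha$ we have $\log M_P(r)\le_{\text{\tiny as}}r^\alpha$, and after enlarging the constant to absorb the bounded behaviour on compact sets this reads $P(z)\le C\exp(|z|^\alpha)$ for a suitable $C>0$. Theorem~\ref{thm:beralpha} then gives $(1/b_n)\in\ell^{\alpha+\varepsilon}$ for every $\varepsilon>0$, whence $\mathcal E(b_n)\le\alpha$. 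Letting $\alpha\downarrow\rho$ yields $\mathcal E(b_n)\le\rho$, and combined with the first part this proves $\rho=\mathcal E(b_n)$.

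I do not expect a genuine obstacle: the substantive work is already contained in Theorems~\ref{thm:order1} and \ref{thm:beralpha}, and the only points needing care are staying within the admissible range of $\alpha$ for each theorem—handled by the trivial cases $\mathcal E(b_n)=1$ and $\rho=1$—and the passage from the asymptotic inequality $\log M_P(r)\le_{\text{\tiny as}}r^\alpha$ to a global bound of the form $P(z)\le C\exp(|z|^\alpha)$, which is routine because $P$ is continuous and thus bounded on every disc.
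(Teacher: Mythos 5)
Your proposal is correct, and its second half (the inequality $\mathcal E(b_n)\le\rho$ via Theorem~\ref{thm:beralpha}, with the trivial case $\rho=1$ set aside) is exactly the argument in the paper. The first half is where you genuinely diverge: to get $\rho\le\mathcal E(b_n)$ you pick $\alpha$ with $\mathcal E(b_n)<\alpha\le 1$, pass from $(1/b_n)\in\ell^\alpha$ through the Berezanski\u{\i} estimate \eqref{eq:spec} to $(P_n^2(0)),(Q_n^2(0))\in\ell^\alpha$, and then invoke Theorem~\ref{thm:order1} to conclude $\rho\le\alpha$; the paper instead uses the pointwise bound \eqref{eq:Pbn}, i.e.\ $P(z)\le c\,(\sum 1/b_{n-1})^{1/2}\,\Pi(|z|)$ coming directly from \eqref{eq:berez}, and then cites Borel's theorem that the canonical product $\Pi$ has order exactly $\mathcal E(b_n)$. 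The two routes rest on the same engine (Theorem~\ref{thm:ber}); yours stays entirely inside the paper's own machinery, needs no external result on canonical products, and requires the small extra bookkeeping you correctly supply (the case $\mathcal E(b_n)=1$ disposed of by Riesz's bound $\rho\le 1$, and the limit $\alpha\downarrow\mathcal E(b_n)$), whereas the paper's route is shorter at this point and in addition identifies the order of $\Pi$ exactly rather than only from above. Your remaining details check out: under \eqref{eq:finalber} the sequence $(b_n)$ is eventually increasing to infinity, so $\sum 1/b_n^\alpha<\infty$ for every $\alpha>\mathcal E(b_n)$, indeterminacy and Carleman give $\mathcal E(b_n)\le 1$, and the passage from $\log M_P(r)\le_{\text{\tiny as}}r^\alpha$ to a global bound $P(z)\le C\exp(|z|^\alpha)$ is legitimate since $P$ is continuous.
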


\begin{proof} We first show that $\mathcal E(b_n)\le\rho_P$. This is
  clear if $\rho_P=1$ because 
by assumption $\mathcal E(b_n)\le 1$. If
  $\rho_P<1$ then $P$ satisfies 
$$
M_P(r)\le_{as}\exp(r^\alpha)
$$
for any $\alpha>\rho_P$. By \eqref{eq:Pnalpha} we then have $\sum
1/b_n^{\alpha+\varepsilon}<\infty$ for $\alpha>\rho_P$ and
$\varepsilon>0$, hence $\mathcal E(b_n)\le \rho_P$.

By \eqref{eq:berez} we get for $r_n=P_n$ 
\begin{equation}\label{eq:Pbn}
    P(z)\le c\;\left(\sum_{n=0}^\infty\frac{1}{b_{n-1}}\right)^{1/2}\;\Pi(|z|),  
\end{equation}
and the infinite product $\Pi(z)$
is an entire function of order equal to
$\mathcal  E(b_n)$ by Borel's Theorem, cf. \cite{Le}, hence $\rho_P\le \mathcal E(b_n)$. 
\end{proof}

\begin{ex}\label{thm:ex0} {\rm For $\alpha>1$ let $b_n=(n+1)^\alpha,a_n=0,n\ge 0$. The three-term recurrence relation \eqref{eq:3t} with these coefficients determine the orthonormal polynomials of a symmetric indeterminate moment problem satisfying \eqref{eq:conv} and \eqref{eq:logconcave}.
 By Theorem~\ref{thm:beralpha1} the order of the moment problem is $1/\alpha$.

Similarly, $b_n=(n+1)\log^\alpha(n+2),a_n=0$ lead for $\alpha>1$ to a
symmetric indeterminate moment problem of order 1 and type 0.} 
\end{ex}

Theorem~\ref{thm:order1} and Theorem~\ref{thm:beralpha} can be generalized in order to capture much slower
types of growth of the moment problem, as well as   growth  faster than any
order. This is done in the following section.

\section{Order functions}\label{sec-ord}

\begin{defn}\label{thm:alphaorder}
{\rm By an {\bf order function}\footnote{There is no direct relation
    between this concept and Valiron's concept of a proximate order
    studied in \cite{Le}.}
we understand a continuous, positive and
  increasing function $\alpha:(r_0,\infty)\to \mathbb R$ with
  $\lim_{r\to\infty}\alpha(r)=\infty$ and such that the
  function $r/\alpha(r)$ is also increasing with $\lim_{r\to\infty}r/\alpha(r)=\infty$. Here $0\le r_0<\infty$.}
\end{defn}

If $\alpha$ is an order function , then so is $r/\alpha(r)$.

\begin{defn}\label{thm:dualorder} {\rm For an order function $\alpha$ as
  above, the function
$$
\beta(r)=\frac{1}{\alpha(r^{-1})},\quad 0<r<r_0^{-1}
$$
 will be called the {\bf dual function}.  Since $\lim_{r\to
   0}\beta(r)=0$, we define $\beta(0)=0$. Note that $\beta$ as well as $r/\beta(r)$
are increasing.}
\end{defn}

 Observe that the dual function satisfies
\begin{equation}\label{eq:dualord1}
\beta(Kr)\le K\beta(r),\quad
K>1,\; 0<Kr<r_0^{-1},
\end{equation}
\begin{equation}\label{eq:dualord2}
\beta (r_1+r_2)\le \beta (2\max(r_1,r_2))\le 2\beta(\max(r_1,r_2))\le 
2\beta(r_1)+2\beta (r_2),
\end{equation}
for $2\max(r_1,r_2)<1/r_0$.
\medskip

\begin{ex}\label{thm:ex1} {\rm Order functions.} 

 {\rm {\bf 1.}  The function $\alpha(r)=r^\alpha $ with $0<\alpha<1$ 
  satisfies the assumptions of an order function with $r_0=0$, 
 and $\beta(r)=\alpha(r).$

\medskip
{\bf 2.} The function $\alpha(r)=\log ^\alpha r$ with $\alpha>0$
  satisfies the assumptions of an order function with $r_0=\exp(\alpha)$ and 
$$
\beta(r)=\frac{1}{(-\log r)^\alpha }.
$$

\medskip
{\bf 3.} The function $\alpha(r)=\log^{\alpha}\log r$ with $\alpha>0$
  is an order function with $r_0>\rm{e}$ being the unique solution to 
$(\log r)\log\log r=\alpha$.

\medskip
{\bf 4.} If $\alpha$ is an order function, the so are $c\alpha(r)$ and $\alpha(cr)$ for $c>0$.

\medskip
{\bf 5.} If $\alpha_1$ and $\alpha_2$ are order functions, then also $\alpha_1(\alpha_2(r))$ is an order function for $r$ sufficiently large.

\medskip
{\bf 6.} The function $\alpha(r)=(\log^{\alpha} r)\log^{\beta}\log r$ is an order function for any $\alpha,\beta>0$,
because
$$
\frac{r}{\alpha(r)}=\left[\frac{r^{1/(\alpha+\beta)}}{(\alpha+\beta)\log r^{1/(\alpha+\beta)}}\right]^{\alpha+\beta}\left[\frac{\log r}{\log\log r}\right]^\beta
$$
shows that  $r/\alpha(r)$ is increasing for $r>r_0:=\exp(\max(e,\alpha+\beta))$.
}
\end{ex}

\begin{defn}\label{thm:def1} Let $\alpha$ be an order function.
A continuous unbounded function $f:\mathbb C\to\mathbb C$ is said to have order bounded by $\alpha(r)$ if
$$
 M_f(r)\le_{as} e^{K\alpha(r)\log r}=r^{K\alpha(r)},
$$
 for some constant $K.$
 \end{defn}

For $f$ as above to have order bounded by $\alpha(r)=\log^{\alpha}r$
for some $\alpha>0$, is the same as to have finite logarithmic order in the sense of
Section~\ref{sec-pre}.

Given an order function $\alpha:(r_0,\infty) \to \mathbb R$ and its
dual $\beta$, we are in the following going to consider expressions
$\beta(u_n)$, where $\{u_n\}$ is a sequence of non-negative numbers
tending to zero. This means that $\beta(u_n)$ is only defined for $n$
sufficiently large, so assertions like
$$
\sum_{n}^\infty \beta(u_n)<\infty,\quad \beta(u_n)=O(1/n)
$$ 
make sense. The first assertion means that 
$$
\sum_{n=N}^\infty \beta(u_n)<\infty
$$
for one $N$ (and then for all $N$) so large that $\beta(u_n)$ is defined for $n\ge N$.

We begin   by proving two lemmas.

\begin{lem}\label{thm:counting} Let $\alpha:(r_0,\infty)\to(0,\infty)$
  be an order function
  with dual function $\beta$ and let $\{u_n\}_{n=1}^\infty$ be a
  sequence of positive numbers such that $u_n\to 0$
and $u_n<1/r_0$ for all $n\ge n_0$.
 
For any  number $r >0$ let $A_r=\{ n \mid  u_n\ge r^{-1}\}$ and $N_r=\# A_r$.  

\begin{enumerate}
\item[(a)] Assume $\sum_{n}^\infty \beta(u_n)<\infty. $ Then $ N_r=O(\alpha (r))$.
\item[(b)] Assume $N_r=O(\alpha(r))$. Then for any $\varepsilon>0$
$$
\sum_{n}^\infty \beta^{1+\varepsilon}(u_n)<\infty.
 $$
\end{enumerate}
\end{lem}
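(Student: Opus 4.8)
The plan is to translate both statements into comparisons between the tail sum $\sum\beta(u_n)$ and the "distribution function" $N_r$, exploiting the defining relation $\beta(r)=1/\alpha(r^{-1})$, i.e. $\beta(u_n)=1/\alpha(u_n^{-1})$. Since $u_n\to 0$ and $u_n<1/r_0$ for $n\ge n_0$, the quantity $\alpha(u_n^{-1})$ is well-defined and $\to\infty$; reorder so that $u_n^{-1}$ is non-decreasing (this costs nothing, as both hypotheses and conclusions only involve the unordered collection of values). Write $v_n=u_n^{-1}$, so $v_n\uparrow\infty$ and $N_r=\#\{n\mid v_n\le r\}$ is exactly the counting function of the sequence $(v_n)$. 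Then $\sum\beta(u_n)=\sum 1/\alpha(v_n)$, and the lemma becomes a statement relating convergence of $\sum 1/\alpha(v_n)$ with the growth $N_r=O(\alpha(r))$ — a variant of the classical equivalence between the exponent of convergence and the counting function (Lemma~\ref{thm:expc=or}), but with $\alpha(r)$ in place of $r^\alpha$.

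For part (a), assume $\sum 1/\alpha(v_n)<\infty$. Fix $r$ large. For every $n\in A_r$ we have $v_n\le r$, hence $\alpha(v_n)\le\alpha(r)$ by monotonicity of $\alpha$, so $1/\alpha(v_n)\ge 1/\alpha(r)$. Summing over the (necessarily finitely many) indices in $A_r\setminus A_{r_1}$ for a fixed large threshold $r_1$ gives
$$
\frac{N_r-N_{r_1}}{\alpha(r)}\le\sum_{n\in A_r\setminus A_{r_1}}\frac{1}{\alpha(v_n)}\le\sum_{v_n>r_1}\frac{1}{\alpha(v_n)},
$$
and the right-hand side is a fixed finite number (tail of a convergent series). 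Hence $N_r\le N_{r_1}+C\alpha(r)=O(\alpha(r))$, using $\alpha(r)\to\infty$.

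For part (b), assume $N_r\le C\alpha(r)$ for $r\ge r_1$, and fix $\varepsilon>0$. I want $\sum 1/\alpha(v_n)^{1+\varepsilon}<\infty$. The standard device is summation by parts against the counting function: choosing an increasing sequence of radii $r_j$, the number of indices with $r_{j}<v_n\le r_{j+1}$ is at most $N_{r_{j+1}}\le C\alpha(r_{j+1})$, and for such indices $1/\alpha(v_n)^{1+\varepsilon}\le 1/\alpha(r_j)^{1+\varepsilon}$. So
$$
\sum_{v_n>r_1}\frac{1}{\alpha(v_n)^{1+\varepsilon}}\le C\sum_{j\ge 1}\frac{\alpha(r_{j+1})}{\alpha(r_j)^{1+\varepsilon}}.
$$
The task is to pick $r_j$ so that this series converges; the natural choice is to let $\alpha(r_j)$ grow geometrically, say to pick $r_j$ with $\alpha(r_j)=2^j$ (possible for $j$ large since $\alpha$ is continuous, increasing and unbounded). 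Then $\alpha(r_{j+1})/\alpha(r_j)^{1+\varepsilon}=2^{j+1}/2^{j(1+\varepsilon)}=2\cdot 2^{-j\varepsilon}$, which is summable. This yields the claim.

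The main obstacle is the bookkeeping in part (b): one must be careful that $\alpha$ need not take all large values (it is continuous and increasing, so it does hit $2^j$ for $j$ large, but one should phrase the choice of $r_j$ to avoid assuming strict monotonicity or surjectivity onto $(r_0,\infty)$), and one must handle the finitely many small indices separately. The order-function axioms beyond monotonicity (the growth condition on $r/\alpha(r)$, properties \eqref{eq:dualord1}, \eqref{eq:dualord2}) are not actually needed here — only that $\alpha$ is increasing, continuous and tends to $\infty$ — so I would remark that the proof uses just these weaker facts. Everything else is routine estimation with the counting function.
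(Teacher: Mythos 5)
Your proposal is correct, but it takes a somewhat different route from the paper in both halves, so a comparison is worthwhile. For (a) the paper passes to the decreasing rearrangement $(v_n)$ of $(u_n)$ and uses that a convergent series with decreasing terms satisfies $n\beta(v_n)\le K$, then counts the indices with $n\le K\alpha(r)$; you instead make a direct Chebyshev-type estimate, $(N_r-N_{r_1})/\alpha(r)\le\sum_{v_n>r_1}1/\alpha(v_n)<\infty$ with $v_n=u_n^{-1}$, which avoids the rearrangement and is if anything slightly simpler. For (b) the paper's proof is essentially one line: with $v_n$ the decreasing rearrangement, $N_{1/v_n}\ge n$, so the hypothesis gives $n\le K\alpha(1/v_n)$, i.e.\ $\beta(v_n)=O(1/n)$, and summability of any power $1+\varepsilon$ is immediate; you instead run a dyadic-block summation against the counting function, choosing $r_j$ with $\alpha(r_j)=2^j$ (legitimate by continuity and unboundedness of $\alpha$, and your bookkeeping of the finitely many small indices and of the range where $\beta(u_n)$ is defined is in order). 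Both arguments are sound. The paper's route is shorter in (b), needs no continuity of $\alpha$, and yields the sharper intermediate bound $\beta(v_n)=O(1/n)$, which is the same kind of pointwise estimate exploited again in Theorem~\ref{thm:order2}; your route buys freedom from the rearrangement and makes explicit that only monotonicity, continuity and unboundedness of $\alpha$ are used, though the paper's proof likewise uses nothing beyond monotonicity of $\beta$, so that remark is not a genuine weakening of hypotheses relative to the original argument.
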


\begin{proof}
Let $v_n$ be the decreasing rearrangement of the sequence $u_n.$ Then 
$$
 N_r=\#\{ n\,\mid \, v_n\ge r^{-1}\},
$$
and since $\beta(r)$ is increasing, we find for $r>r_0$
$$
N_r\le n_0-1+ \#\{ n\ge n_0\,\mid\, \beta(v_n)\ge \beta(r^{-1})\}.
$$
(a) We have $\sum_{n}^{\infty} \beta(v_n)<\infty$, hence $n\beta
(v_n)\to 0$ and thus $n\beta(v_n)\le K$ for $n\ge n_0$ and a suitable constant $K.$
Furthermore, 
\begin{eqnarray*} N_r &\le& n_0-1+\#\left \{ n\ge n_0\,\mid\, \frac{K}{n}\ge \beta(r^{-1})\right \}\\
&=& n_0-1+\#\left \{ n\ge n_0\,\mid\,  n\le K\alpha(r) \right \},
\end{eqnarray*}
showing that $N_r=O(\alpha(r))$.

\noindent (b) Assume $N_r=O(\alpha(r)).$ Observing that
$N_{v^{-1}_n}\ge n$ we get
$n\le K\alpha(v^{-1}_n),$ for $n$ sufficiently large and suitable $K$, i.e.,
 $\beta(v_n)=O(1/n)$, which implies the conclusion.
\end{proof}

 \begin{lem}\label{thm:ineq} Assume the conditions of
   Lemma~\ref{thm:counting}(a). For $r>r_0$ we then have
 $$
\log\prod_{n=1}^\infty (1+ru_n)\le N_r[\log r +C]
+\alpha(r)\sum_{n\notin A_{r_0}}\beta(u_n),
$$
 where  $C=\max \{\log (2u_n)\}.$
 \end{lem}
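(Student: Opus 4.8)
The plan is to split the infinite product according to whether $n\in A_r$ or not, bounding the two groups of factors by different strategies. First I would write
$$
\log\prod_{n=1}^\infty (1+ru_n)=\sum_{n\in A_r}\log(1+ru_n)+\sum_{n\notin A_r}\log(1+ru_n),
$$
noting that the product converges because $u_n\to 0$ and $\sum\beta(u_n)<\infty$ forces $\sum u_n<\infty$ (since $\beta(u_n)\ge u_n\alpha(1)^{-1}$ is too weak; rather one uses that $u_n\to0$ so eventually $u_n<1/r_0$ and $\beta$ is defined, and the product over the finitely many $n\in A_r$ is trivially finite while the tail is controlled below).

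For $n\in A_r$, i.e. $u_n\ge r^{-1}$, I would bound each factor crudely by $1+ru_n\le 2ru_n$ (using $ru_n\ge 1$), so
$$
\sum_{n\in A_r}\log(1+ru_n)\le\sum_{n\in A_r}\big(\log r+\log(2u_n)\big)\le N_r\big(\log r+C\big),
$$
where $C=\max\{\log(2u_n)\}$ (the max over the relevant index set, finite since $u_n\to 0$). This accounts for the first term on the right-hand side.

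For $n\notin A_r$, i.e. $u_n<r^{-1}$, I would use $\log(1+x)\le x$ to get $\log(1+ru_n)\le ru_n$, and then the key step: rewrite $ru_n$ using the duality between $\alpha$ and $\beta$. Since $u_n<r^{-1}<r_0^{-1}$ we have $\beta(u_n)=1/\alpha(u_n^{-1})$, and because $r/\alpha(r)$ is increasing while $u_n^{-1}>r>r_0$, one gets $u_n^{-1}/\alpha(u_n^{-1})\ge r/\alpha(r)$, i.e. $u_n\alpha(u_n^{-1})\le \alpha(r)/r$, equivalently $ru_n\le\alpha(r)/\alpha(u_n^{-1})=\alpha(r)\beta(u_n)$. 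Hence
$$
\sum_{n\notin A_r}\log(1+ru_n)\le\alpha(r)\sum_{n\notin A_r}\beta(u_n)\le\alpha(r)\sum_{n\notin A_{r_0}}\beta(u_n),
$$
the last inequality because $A_{r_0}\subseteq A_r$ for $r>r_0$, so $\{n\notin A_r\}\subseteq\{n\notin A_{r_0}\}$ and all terms $\beta(u_n)$ are nonnegative. Adding the two bounds gives the claim.

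The main obstacle is the middle manipulation: getting the clean inequality $ru_n\le\alpha(r)\beta(u_n)$ for $u_n<r^{-1}$ purely from the monotonicity of $r\mapsto r/\alpha(r)$ and the definition of the dual. One has to be careful that $u_n^{-1}$ lies in the domain $(r_0,\infty)$ of $\alpha$ — guaranteed by $u_n<r^{-1}<r_0^{-1}$ — and that the convergence $\sum_{n\notin A_{r_0}}\beta(u_n)<\infty$ (which makes the right-hand side finite) follows from the hypothesis $\sum_n^\infty\beta(u_n)<\infty$. Everything else is routine estimation of logarithms.
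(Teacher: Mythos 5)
Your proof is correct and follows essentially the same route as the paper: split the product at the index set $A_r$, bound the factors with $u_n\ge r^{-1}$ by $1+ru_n\le 2ru_n$, and use the duality inequality $ru_n\le\alpha(r)\beta(u_n)$ (which the paper obtains from the monotonicity of $s/\beta(s)$, equivalent to your use of the monotonicity of $r/\alpha(r)$) for the remaining factors, then enlarge $\{n\notin A_r\}$ to $\{n\notin A_{r_0}\}$.
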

 \begin{proof}  For $n\in A_r$ we have $ru_n\ge 1$, hence
 $$
\log(1+ru_n)\le \log 2ru_n=\log r +\log (2u_n)\le \log r +C.
$$  
 Furthermore,  for $r>r_0,n\notin A_r$ we have $u_n<r^{-1}$, and using
 that $s/\beta(s)$ is increasing leads to
 $$
 ru_n=\frac{u_n}{r^{-1}}\le
 \frac{\beta(u_n)}{\beta(r^{-1})}=\alpha(r)\beta (u_n).
$$
 Thus, for $r>r_0$
 \begin{multline*}
\log \prod_{n=1}^\infty (1+ru_n)=\sum_{n\in A_r}\log
 (1+ru_n)+\sum_{n\notin A_r}\log (1+ru_n)\\
 \le N_r [\log r +C]+\sum_{n\notin A_r} \alpha(r)\beta(u_n)\le 
 N_r [\log r + C] + \alpha(r)\sum_{n\notin A_{r_0}} \beta(u_n).
 \end{multline*}
 \end{proof}
 Combining Lemma~\ref{thm:counting}(a) and Lemma~\ref{thm:ineq} gives immediately the following.

 \begin{prop}\label{thm:ineq1}  Let $\alpha:(r_0,\infty)\to(0,\infty)$
  be an order function
  with dual function $\beta$, and let $\{u_n\}_{n=1}^\infty$ be a
  sequence of positive numbers such that $u_n\to 0$
and $u_n<1/r_0$ for all $n\ge n_0$. Under the assumption
$\sum_n^\infty \beta(u_n)<\infty$
$$
\log\prod_{n=1}^\infty (1+ru_n)=O(\alpha(r)\log r),
$$
and in particular  the entire function
$$
f(z)=\prod_{n=1}^\infty(1+zu_n)
$$
has order bounded by $\alpha$.
 \end{prop}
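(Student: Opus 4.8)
The plan is to combine Lemma~\ref{thm:ineq} with Lemma~\ref{thm:counting}(a), which under the standing hypothesis $\sum_n^\infty\beta(u_n)<\infty$ gives $N_r=O(\alpha(r))$. Starting from the inequality in Lemma~\ref{thm:ineq}, valid for $r>r_0$,
$$
\log\prod_{n=1}^\infty (1+ru_n)\le N_r[\log r +C]+\alpha(r)\sum_{n\notin A_{r_0}}\beta(u_n),
$$
I would estimate the two terms on the right separately. The quantity $\sum_{n\notin A_{r_0}}\beta(u_n)$ is bounded by $\sum_n^\infty\beta(u_n)<\infty$ and hence is a constant independent of $r$, so the second term is $O(\alpha(r))$, which is in particular $O(\alpha(r)\log r)$.

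For the first term, I would use $N_r=O(\alpha(r))$ together with the observation that $C=\max\{\log(2u_n)\}$ is a finite constant (finite because $u_n\to 0$, so the supremum is attained), whence $\log r+C=O(\log r)$ as $r\to\infty$ and therefore $N_r[\log r+C]=O(\alpha(r)\log r)$. Adding the two estimates yields $\log\prod_{n=1}^\infty(1+ru_n)=O(\alpha(r)\log r)$, which is the first assertion of the proposition.

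For the ``in particular'' part, I would note that $|1+zu_n|\le 1+|z|u_n$ gives $M_f(r)\le\prod_{n=1}^\infty(1+ru_n)$ for the entire function $f(z)=\prod_{n=1}^\infty(1+zu_n)$, so $\log M_f(r)\le_{\text{\tiny as}}K\alpha(r)\log r$ for a suitable constant $K$; by Definition~\ref{thm:def1} this is precisely the statement that $f$ has order bounded by $\alpha$. There is no real obstacle in this argument --- the substance was already delivered by Lemmas~\ref{thm:counting} and~\ref{thm:ineq} --- and the only point requiring a moment's care is to check that $C$ and $\sum_{n\notin A_{r_0}}\beta(u_n)$ are genuinely independent of $r$, so that both get absorbed into the $O$-notation.
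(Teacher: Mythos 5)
Your proposal is correct and is essentially the paper's own argument: the paper simply states that the proposition follows immediately by combining Lemma~\ref{thm:counting}(a) with Lemma~\ref{thm:ineq}, and you carry out exactly that combination, including the (correct) observations that $C$ and $\sum_{n\notin A_{r_0}}\beta(u_n)$ are finite constants independent of $r$ and that $M_f(r)\le\prod_{n=1}^\infty(1+ru_n)$.
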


Theorem \ref{thm:order1} and  \ref{thm:beralpha}
can be considered as results about the order function $\alpha(r)=r^\alpha,0<\alpha<1$.

Theorem \ref{thm:beta1} and  \ref{thm:order2} below are similar
results for arbitrary order functions.
 The price for the generality is an extra log-factor, so the
 generalization is mainly of interest
 for orders of slower growth than $\alpha(r)=r^\alpha$. For the order
 $\alpha(r)=r^\alpha$ it is better
 to refer directly to the results of Section~\ref{sec-ber}.

\begin{thm}\label{thm:beta1} For an order function $\alpha$ with dual function $\beta$ the
  following conditions are equivalent for a given indeterminate moment problem:
\begin{enumerate}
\item[(i)] $\beta(P_n^2(0)),\beta(Q_n^2(0))\in\ell^1$,
\item[(ii)] $\beta(|P_n(z)|^2),\beta(|Q_n(z)|^2)\in\ell^1$ for all
  $z\in\mathbb C$.
\end{enumerate}
If the conditions are satisfied, then the two series indicated in (ii) converge
uniformly on compact subsets of $\mathbb C$.

Furthermore, $\beta(1/b_n)\in \ell^1$ and $P$ has order bounded by $\alpha$.
\end{thm}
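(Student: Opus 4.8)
The plan is to mimic the proof of Theorem~\ref{thm:order1}, but replacing the power function $r^\alpha$ by the general order function $\alpha(r)$, and paying the promised extra logarithmic factor via Proposition~\ref{thm:ineq1} in place of Proposition~\ref{thm:product}. First I would note the trivial implication (ii)$\Rightarrow$(i), since $z=0$ is allowed. For the substantial direction, assume (i). The key analytic input is the matrix product estimate from the proof of Theorem~\ref{thm:order1}: using the Simon-type recursion for the $A,B,C,D$ functions one gets
$$
\left.
\begin{array}{l}
\sqrt{|A_n(z)|^2+|C_n(z)|^2}\\
\sqrt{|B_n(z)|^2+|D_n(z)|^2}
\end{array}
\right\}
\le \prod_{k=0}^\infty\left[1+|z|P_k^2(0)\right]\prod_{k=0}^\infty\left[1+|z|Q_k^2(0)\right].
$$
Now apply Proposition~\ref{thm:ineq1} to each factor with $u_n=P_n^2(0)$ and $u_n=Q_n^2(0)$ respectively; these sequences tend to $0$ (indeterminacy) and by hypothesis $\sum\beta(u_n)<\infty$, so each product is an entire function of order bounded by $\alpha$, and in particular the right-hand side is $\le_{as} e^{K'\alpha(|z|)\log|z|}$ for a suitable constant $K'$.

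Next I would transfer this bound to $P_n$ and $Q_n$ themselves. From the identity $P_n(z)=-P_n(0)B_n(z)+Q_n(0)D_n(z)$ and Cauchy--Schwarz one has, exactly as in \eqref{eq:CS},
$$
|P_n(z)|^2\le (P_n^2(0)+Q_n^2(0))\bigl(|B_n(z)|^2+|D_n(z)|^2\bigr),
$$
and similarly for $Q_n$ with $A_n,C_n$. Applying $\beta$, which is increasing and subadditive up to a factor $2$ (see \eqref{eq:dualord2}), and using that $\beta$ of a product is controlled — more precisely one estimates $\beta(|P_n(z)|^2)$ by breaking the product: if $|B_n(z)|^2+|D_n(z)|^2\le R$ then $\beta(|P_n(z)|^2)\le \beta(R\,(P_n^2(0)+Q_n^2(0)))\le R\,\beta(P_n^2(0)+Q_n^2(0))$ by \eqref{eq:dualord1}, and then $\beta(P_n^2(0)+Q_n^2(0))\le 2\beta(P_n^2(0))+2\beta(Q_n^2(0))$ by \eqref{eq:dualord2}. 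Summing over $n$ and using $\sum\beta(P_n^2(0))<\infty$, $\sum\beta(Q_n^2(0))<\infty$ gives that $\sum_n\beta(|P_n(z)|^2)$ and $\sum_n\beta(|Q_n(z)|^2)$ converge, with a bound locally uniform in $z$ (since $R$ can be taken locally bounded), yielding (ii) and the uniform convergence claim. I expect this bookkeeping with the inequalities \eqref{eq:dualord1}--\eqref{eq:dualord2} to be the main technical nuisance, though not a deep obstacle: one must be careful that $\beta$ is only defined near $0$, so these manipulations are valid only for $n$ large, which is harmless for summability.

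For the order bound on $P$, I would argue as in \eqref{eq:sz}: combining the Cauchy--Schwarz inequalities summed over $n$ gives
$$
P^2(z)=\sum_{n=0}^\infty|P_n(z)|^2\le\left(\sum_{n=0}^\infty(P_n^2(0)+Q_n^2(0))\right)\cdot\sup_n\bigl(|B_n(z)|^2+|D_n(z)|^2\bigr),
$$
and the supremum is bounded by the product estimate above, hence $M_P(r)\le_{as}e^{K''\alpha(r)\log r}=r^{K''\alpha(r)}$, i.e.\ $P$ has order bounded by $\alpha$ in the sense of Definition~\ref{thm:def1}. Finally, for $\beta(1/b_n)\in\ell^1$ I would use the Wronskian-type identity \eqref{eq:LO}, giving as in \eqref{eq:LO1}
$$
\frac{2}{b_{n-1}}\le |P_{n-1}(z)|^2+|P_n(z)|^2+|Q_{n-1}(z)|^2+|Q_n(z)|^2;
$$
fixing any $z$, applying the increasing function $\beta$ and then \eqref{eq:dualord1}, \eqref{eq:dualord2} to split the right-hand side, we get $\beta(1/b_{n-1})\le\beta(1)\bigl(\cdots\bigr)$, a finite linear combination of $\beta(|P_{n-1}(z)|^2)$, $\beta(|P_n(z)|^2)$, $\beta(|Q_{n-1}(z)|^2)$, $\beta(|Q_n(z)|^2)$, each of which is summable by (ii). Hence $\sum\beta(1/b_n)<\infty$, completing the proof.
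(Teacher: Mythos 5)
Your proposal is correct and follows essentially the same route as the paper: the trivial implication (ii)$\Rightarrow$(i), then the Simon-type product bound \eqref{simon} together with Proposition~\ref{thm:ineq1} for the bound $M_P(r)\le_{\text{\tiny as}}r^{K''\alpha(r)}$, the Cauchy--Schwarz step via \eqref{eq11}, the inequalities \eqref{eq:dualord1}--\eqref{eq:dualord2} (for $n$ large) to get summability and locally uniform convergence in (ii), and \eqref{eq:LO1} for $\beta(1/b_n)\in\ell^1$. The only, harmless, difference is that the paper takes the locally bounded factor multiplying $P_n^2(0)+Q_n^2(0)$ from \eqref{eq:Pn2alpha} with $\alpha=1$, namely $\exp(2K(1)|z|)$, instead of from the order-function product estimate.
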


\begin{proof}  Condition (ii) is clearly stronger than condition (i). 

Assume next that (i) holds. 
By \eqref{eq:Pn2alpha} for $\alpha=1$
\begin{equation}\label{eq:zto0}
|P_n(z)|^2\le (P_n^2(0)+Q_n^2(0))\exp(2K(1)|z|),
\end{equation}
so by \eqref{eq:dualord1} and  \eqref{eq:dualord2} we get for $n$
sufficiently large
\begin{equation}\label{eq:betaz0}
\beta(|P_n(z)|^2)\le 2\exp(2K(1)|z|)\left(\beta(P_n^2(0))+ \beta(Q_n^2(0))\right).
\end{equation}
This shows that $\sum\beta(|P_n(z)|^2)$ converges uniformly on compact
subsets of $\mathbb C$. 

The assertion $\beta(|Q_n(z)|^2)\in\ell^1$ is proved similarly.

 By (\ref{simon})   and Proposition~\ref{thm:ineq1} 
  we obtain 
\begin{equation}\label{eq:BnDnlog}
\sqrt{|B_n(z)|^2+|D_n(z)|^2}\le \exp(L \alpha(|z|)\log |z|),
\end{equation}
 for some constant $L$ and $|z|$ sufficiently large. Using \eqref{eq:CS} and \eqref{eq:Kalpha} (with $\alpha=1$)
 we then get for large $|z|$
 $$
 P^2(z)=\sum_{n=0}^\infty |P_n(z)|^2 \le K(1) \exp(2L \alpha(|z|)\log
 |z|),
$$
which shows that $P$ has order bounded by $\alpha$.

From the inequality \eqref{eq:LO1} we immediately get that $\beta(1/b_n)\in\ell^1$.
\end{proof}

\begin{thm}\label{thm:order2}  Assume that  the coefficients of \eqref{eq:3t}  satisfy 
$$
 \sum_{n=1}^\infty \frac{1+|a_n|}{\sqrt{b_nb_{n-1}}}<\infty,
$$
and that either \eqref{eq:logconvex} or \eqref{eq:logconcave} holds.
Assume in addition that the function $P(z)$ has order bounded by some given order function $\alpha$.  

\begin{enumerate}
\item[(i)] If there is $0<\alpha<1$ so that $r^\alpha \le_{\text{\tiny as}}\alpha(r)$,
then 
$$ \beta(1/b_n), \beta(P^2_n(0)), \beta(Q_n^2(0))=O\left(\frac{\log n}{n}\right).
$$

\item[(ii)] If $\alpha(r^2)= O(\alpha(r))$,
then
$$  \beta(1/b_n), \beta(P^2_n(0)), \beta(Q_n^2(0))=O(1/n).
$$
\end{enumerate}

 In both cases
$$
\beta(1/b_n), \beta(P_n^2(0)),
\beta(Q_n^2(0))\in\ell^{1+\varepsilon}
$$
 for any $\varepsilon>0.$
\end{thm}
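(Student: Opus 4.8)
The strategy is to mimic the proof of Theorem~\ref{thm:beralpha}, replacing the power $r^\alpha$ by the general order function $\alpha(r)$ and keeping careful track of the extra logarithmic factor. First I would recall that under the Berezanski\u{\i} hypotheses Lemma~\ref{thm:inc} gives $b_{n-1}<b_n$ for $n\ge n_1$, so $b:=\min_k b_k>0$, and the estimate \eqref{eq:consBer} holds: $b_{n-1}^{-2n}\le A\,c_n^2$ for $n\ge n_1$ with $A$ as in the proof of Theorem~\ref{thm:beralpha}. Combining this with \eqref{eq:ckP} and the assumption that $P$ has order bounded by $\alpha$, i.e. $M_P(r)\le_{\text{\tiny as}} r^{K\alpha(r)}$, we get
$$
\sum_{n=n_1}^\infty\left(\frac{r}{b_{n-1}}\right)^{2n}\le A\sum_{n=0}^\infty c_n^2 r^{2n}=\frac{A}{2\pi}\int_0^{2\pi}P^2(re^{it})\,dt\le_{\text{\tiny as}} A\,r^{2K\alpha(r)},
$$
hence for each fixed $n\ge n_1$ and all large $r$,
$$
\frac{r}{b_{n-1}}\le A^{1/2n}\,r^{K\alpha(r)/n}.
$$
Taking logarithms, $\log(r/b_{n-1})\le \frac{1}{2n}\log A+\frac{K\alpha(r)}{n}\log r$, so that for $r$ large enough depending on nothing (the $\log A$ term is absorbed once $r\ge A$) we have $\log b_{n-1}\ge \log r - \frac{2K\alpha(r)}{n}\log r$ for all $n$ with $n\ge 4K\alpha(r)$, say, whence $\log b_{n-1}\ge \tfrac12\log r$.

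The crux is now to choose $r=r(n)$ well. Since $r/\alpha(r)$ is an increasing function tending to infinity, for each large $n$ there is a unique $r_n$ with $r_n/\alpha(r_n)=8K$, i.e. essentially $\alpha(r_n)\asymp r_n/(8K)$; more robustly, choose $r_n$ so that $4K\alpha(r_n)=n$, which is possible for $n$ large because $\alpha$ is continuous, increasing and unbounded. With this choice the bound above gives $\log b_{n-1}\ge \tfrac12\log r_n$, i.e. $1/b_{n-1}\le r_n^{-1/2}$, hence $\beta(1/b_{n-1})\le \beta(r_n^{-1/2})$. Using $\beta(s^{1/2})=1/\alpha(s^{-1/2}) $ together with the sub-multiplicativity \eqref{eq:dualord1} (applied with $K=2$ to pass between $r_n^{-1/2}$ and $r_n^{-1}$ up to a factor $2$), we reduce to estimating $\beta(1/r_n)=1/\alpha(r_n)$; but by construction $\alpha(r_n)=n/(4K)$, so $\beta(1/r_n)=O(1/n)$. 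This already proves part~(ii) once we check that the extra hypothesis $\alpha(r^2)=O(\alpha(r))$ is exactly what legitimizes replacing $r_n^{1/2}$-type arguments by $r_n$ without losing a logarithm: the point is that $\alpha(r^2)=O(\alpha(r))$ makes $\alpha$ (hence $\beta$) behave like $\log^\gamma$ at worst, so squaring the argument costs only a bounded factor. For part~(i), where only $r^\alpha\le_{\text{\tiny as}}\alpha(r)$ is assumed (so $\alpha$ may grow as fast as a power), squaring $r$ costs a genuine factor, and one picks up the extra $\log n$: here one chooses $r_n$ with $4K\alpha(r_n)=n$ again, but now the elementary inequality $r^\alpha\le_{\text{\tiny as}}\alpha(r)$ forces $r_n\le_{\text{\tiny as}} n^{1/\alpha\cdot(1+o(1))}$, and one tracks that $\beta(1/b_{n-1})\le \beta(r_n^{-1/2})=O(\alpha(r_n)^{-1})\cdot(\text{log correction})$, the correction being $O(\log n)$ because $\log r_n=O(\log n)$. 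In both cases the statements for $P_n^2(0)$ and $Q_n^2(0)$ follow from \eqref{eq:spec}, which gives $P_n^2(0),Q_n^2(0)=O(1/b_{n-1})$, and $\beta$ is increasing, so $\beta(P_n^2(0))\le\beta(C/b_{n-1})\le C'\beta(1/b_{n-1})$ by \eqref{eq:dualord1}.

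Finally, the $\ell^{1+\varepsilon}$ conclusion is immediate: a sequence $(x_n)$ of positive numbers with $x_n=O(\log n/n)$ or $x_n=O(1/n)$ satisfies $\sum x_n^{1+\varepsilon}<\infty$ for every $\varepsilon>0$, since $\sum (\log n)^{1+\varepsilon}/n^{1+\varepsilon}<\infty$. Applying this to $x_n=\beta(1/b_n),\beta(P_n^2(0)),\beta(Q_n^2(0))$ gives the last display.

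**Main obstacle.** The delicate point is the bookkeeping of the logarithmic factor and, in particular, verifying that the two hypotheses in (i) and (ii) are precisely the ones needed to control the effect of squaring the radius variable — i.e. the transition from $\alpha(r)$ to $\alpha(r^2)$ and from $\beta(r^{-1/2})$ to $\beta(r^{-1})$ — through the regularity relations \eqref{eq:dualord1}–\eqref{eq:dualord2}. One must be careful that $\alpha$ is only defined on $(r_0,\infty)$ and $\beta$ only near $0$, so all the ``for $n$ sufficiently large'' and ``for $r$ sufficiently large'' qualifiers must be consistently maintained; this is routine but is where an error would most naturally creep in.
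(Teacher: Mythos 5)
Your setup is the paper's: you feed $M_P(r)\le_{\text{\tiny as}} r^{K\alpha(r)}$ into \eqref{eq:ckP}, use \eqref{eq:consBer} to get $r/b_{n-1}\le A^{1/2n}\exp\left((K/n)\alpha(r)\log r\right)$ (this is \eqref{eq:w}), reduce $P_n^2(0),Q_n^2(0)$ to $1/b_{n-1}$ via \eqref{eq:spec} and \eqref{eq:dualord1}, and finish the $\ell^{1+\varepsilon}$ claim by comparison with $\sum(\log n/n)^{1+\varepsilon}$. Your part (ii) is essentially the paper's argument up to constants: choose $r_n$ with $\alpha(r_n)$ proportional to $n$, deduce $1/b_{n-1}=O(r_n^{-1/2})$, and use $\alpha(r^2)=O(\alpha(r))$ to pass from $\alpha(\sqrt{r_n})$ to $\alpha(r_n)$. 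All of that is fine.

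Part (i), however, has a genuine gap. You keep the same choice $4K\alpha(r_n)=n$, which only yields $b_{n-1}\ge\sqrt{r_n}$, and then assert $\beta(r_n^{-1/2})=1/\alpha(\sqrt{r_n})=O(\log n)\cdot\alpha(r_n)^{-1}$. Under the hypothesis of (i) alone this is false: take $\alpha(r)=r^{\gamma}$ with $\alpha\le\gamma<1$, which satisfies $r^{\alpha}\le_{\text{\tiny as}}\alpha(r)$. Then $r_n=(n/4K)^{1/\gamma}$, so $1/\alpha(\sqrt{r_n})=(4K/n)^{1/2}$ while $\alpha(r_n)^{-1}=4K/n$; the loss from halving the exponent is a factor comparable to $\sqrt{n}$, not $\log n$, and your argument only gives $\beta(1/b_{n-1})=O(n^{-1/2})$, far short of the claimed $O(\log n/n)$. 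Knowing $\log r_n=O(\log n)$ controls $\log r_n$ but does not relate $\alpha(\sqrt{r_n})$ to $\alpha(r_n)$ — that comparison is exactly what hypothesis (ii) provides and what (i) lacks. The paper avoids the square root altogether in case (i): it chooses $r_n$ by $K\alpha(r_n)\log r_n=n$, so the exponential factor in \eqref{eq:w} is just $e$ and $1/b_{n-1}\le 3/r_n$; then $\beta(1/b_{n-1})\le 3\beta(1/r_n)=3/\alpha(r_n)=3K\log r_n/n$, and $r^{\alpha}\le_{\text{\tiny as}}\alpha(r)$ gives $\log r_n=O(\log n)$, hence the stated $O(\log n/n)$. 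Rework your case (i) along these lines; the rest of your proof can stand.
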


\begin{proof}
Inserting the estimate
$$
M_P(r)\le_{\text{\tiny as}}\exp(K\alpha(r)\log r)
$$
in \eqref{eq:ckP}, we get
$$
\sum_{k=0}^\infty r^{2k}c_k^2 \le_{\text{\tiny as}}\exp(2K\alpha(r)\log r),
$$
hence by \eqref{eq:consBer}
\begin{equation}\label{eq:help1}
\sum_{n=n_1}^\infty\left(\frac{r}{b_{n-1}}\right)^{2n}\le_{\text{\tiny as}}A\exp(2K\alpha(r)\log r).
\end{equation}
 Choose $r_1>\max(1,r_0)$ so large that the inequality in
\eqref{eq:help1} holds for $r\ge r_1$.
In particular we have
\begin{equation}\label{eq:w}
\frac{r}{b_{n-1}}\le A^{1/2n}
\exp((K/n)\alpha(r)\log r),\quad n\ge n_1,r\ge r_1.
\end{equation} 

Consider (i). For any $n>K\alpha(r_1)\log r_1$ it is
possible by continuity of $\alpha$ to choose $r=r_n>r_1$ such that
 \begin{equation}\label{eq:c}
K\alpha(r_n)\log r_n=n.
\end{equation}  
For sufficiently large $n$ we then have
$$
\frac{1}{b_{n-1}}\le \frac{A^{1/(2n)}e}{r_n} <\frac{3}{r_n}.
$$

Since $\beta$ is increasing, we get for sufficiently large $n$
by \eqref{eq:dualord1}  and \eqref{eq:c}
\begin{equation}\label{eq:d}
   \beta(1/b_{n-1})\le\beta(3/r_n)\le 3\beta(1/r_n)=\frac{3}{\alpha(r_n)}=
\frac{3K\log r_n}{n}.
\end{equation}
But \eqref{eq:c} and the assumption  $r^\alpha\le_{\text{\tiny as}}\alpha(r)$ imply
that $Kr_n^\alpha\log r_1 \le n,$ for large $n.$ Thus
$\log r_n=O(\log n)$, and by \eqref{eq:d} we get
$$
\beta(1/b_{n-1})= O(\frac{\log n}{n}).
$$
In view of \eqref{eq:spec}  we get that
$\beta(P_n^2(0)),\beta(Q_n^2(0))=O(\log n/n)$.

\medskip

We turn now to the case (ii), where $\alpha(r^2)=O(\alpha(r))$. For
any $n>2K\alpha(r_1)$ we now choose $r_n$ such that
\begin{equation}\label{eq:c1}
K\alpha(r_n)=\frac{n}{2}.
\end{equation} 
Then \eqref{eq:w} yields
$$
\frac{1}{b_{n-1}}\le \frac{A^{1/2n}}{\sqrt{r_n}}<\frac{2}{\sqrt{r_n}}
$$
for $n$ sufficiently large.
Thus
$$
\beta(1/b_{n-1})\le \beta (2/\sqrt{r_n})\le 2\beta(1/\sqrt{r_n})=
\frac{2}{\alpha(\sqrt{r_n})}.
$$
By assumption there exists $d>0$ such that $\alpha(\sqrt{r_n})\ge d \alpha(r_n)$ for $n$ large enough.
Thus in view of \eqref{eq:c1} we find
$$
\beta(1/b_{n-1})\le \frac{2}{d\,\alpha(r_n)}=\frac{4K}{d\,n}.
$$
As above, the conclusion follows from \eqref{eq:spec}.
\end{proof}

\begin{rem}\label{thm:rem41} {\rm The following order functions
    satisfy the assumption (i) of Theorem~\ref{thm:order2}:
$$
\alpha(r)=r^\alpha, \;0<\alpha<1,\qquad
\alpha(r)=\frac{r}{\log^\alpha r},\;\alpha>0.
 $$
On the other hand the functions
 $$
\alpha(r)=\log^\alpha r,\quad \alpha(r)=\log^\alpha\log r,\quad \alpha(r)=(\log^{\alpha} r)\log^{\beta} \log r,\;\alpha,\beta>0
$$
satisfy (ii).

Although $\alpha(r)=r/\log^\alpha r$ is an order function for any $\alpha>0$, then an entire function $f$ of order bounded by $\alpha(r)$
is only of minimal exponential type under the assumption $\alpha>1$.
}
\end{rem} 

\begin{ex} {\rm Consider a moment problem of logarithmic order $\rho^{[1]}$ satisfying $0<\rho^{[1]}<\infty$ and of finite logarithmic type $\tau^{[1]}$.
Assume that
$a_n,b_n$ satisfy the conditions of  Theorem~\ref{thm:order2}. Then $P$ has order bounded by the order $\alpha(r)=(\log r)^{\rho^{[1]}}$. Since the
case (ii) occurs, and since $\beta(r)=\log^{-\rho^{[1]}}(1/r)$, we have
$$
\log^{-\rho^{[1]}}(b_n), \log^{-\rho^{[1]}}(P_n^{-2}(0)), \log^{-\rho^{[1]}}(Q_n^{-2}(0)) =O(1/n).
$$
Therefore
$$
1/b_n, P_n^2(0), Q_n^2(0)=O(e^{-Cn^{1/\rho^{[1]}}})
$$
for a suitable constant $C>0$.
From \eqref{eq:zto0}  we also get 
$$
|P_n^{2}(z)|=O(e^{-Cn^{1/\rho^{[1]}}}),
$$
uniformly on compact subsets of $\mathbb C$. These results can be applied to Discrete $q$-Hermite II polynomials, where
$a_n=0,\;b_n=q^{-n-1/2}(1-q^{n+1})^{1/2}$, cf. \cite{K:S}, and to $q^{-1}$-Hermite polynomials, where $a_n=0,\; b_n=(1/2)q^{-(n+1)/2}(1-q^{n+1})^{1/2}$, 
 cf. \cite{I:M}. In both cases $0<q<1$ and $(b_n)$ is log-concave, $\w{}=1$.
}
\end{ex}

In analogy with Theorem~\ref{thm:beralpha1} the logarithmic order of an indeterminate
moment problem of order zero can be determined by the growth of
$(b_n)$, provided the Berezanski\u{\i} conditions hold.

\begin{thm}\label{thm:Berlogorder} Assume that the coefficients of \eqref{eq:3t}  satisfy 
$$
 \sum_{n=1}^\infty \frac{1+|a_n|}{\sqrt{b_nb_{n-1}}}<\infty
$$
and that either  \eqref{eq:logconvex} or \eqref{eq:logconcave} holds. Assume further that the moment problem has order 0.

Then the logarithmic order $\rho^{[1]}$ of the moment problem is
given as $\rho^{[1]}=\mathcal E(\log b_n)$.
\end{thm}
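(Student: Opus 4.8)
The strategy is to mimic the proof of Theorem~\ref{thm:beralpha1}, but at the finer logarithmic scale, using the identity \eqref{eq:ckP} in one direction and the Berezanski\u{\i} bound \eqref{eq:berez} in the other. First I would prove $\mathcal E(\log b_n)\le\rho^{[1]}$. Since the moment problem has order $0$, the logarithmic order $\rho^{[1]}=\w{P}$ is finite or $+\infty$; the inequality is trivial in the latter case, so assume $\rho^{[1]}<\infty$. For any $\alpha>\rho^{[1]}$ we have $M_P(r)\le_{\text{\tiny as}}r^{(\log r)^\alpha}=\exp((\log r)^{\alpha+1})$, which is exactly the hypothesis ``$P$ has order bounded by the order function $\alpha(r)=(\log r)^{\alpha+1}$'' (up to the constant $K$, which is harmless here). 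By Theorem~\ref{thm:order2}(ii), whose hypothesis $\alpha(r^2)=O(\alpha(r))$ is satisfied by $\alpha(r)=(\log r)^{\alpha+1}$ since $\alpha(r^2)=2^{\alpha+1}\alpha(r)$, we obtain $\beta(1/b_n)=O(1/n)$, where $\beta(r)=(-\log r)^{-(\alpha+1)}$; that is, $(\log b_n)^{-(\alpha+1)}=O(1/n)$, equivalently $\log b_n\ge_{\text{\tiny as}} c\,n^{1/(\alpha+1)}$. Hence $\sum 1/(\log b_n)^{\alpha+1+\varepsilon}<\infty$ for every $\varepsilon>0$, so $\mathcal E(\log b_n)\le\alpha+1$ — wait, this gives the wrong normalization, so let me be careful: I should instead feed in the order function $\alpha(r)=(\log r)^{\rho^{[1]}}$ directly (so that ``$P$ has order bounded by $\alpha$'' becomes $M_P(r)\le_{\text{\tiny as}} r^{K(\log r)^{\rho^{[1]}}}$, which holds for any $K>\tau^{[1]}$ when $\tau^{[1]}<\infty$, and for any $\alpha>\rho^{[1]}$ in place of $\rho^{[1]}$ in general). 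Then $\beta(r)=(-\log r)^{-\rho^{[1]}}$, Theorem~\ref{thm:order2}(ii) gives $(\log b_n)^{-\rho^{[1]}}=O(1/n)$, hence $\log b_n\ge_{\text{\tiny as}} c\,n^{1/\rho^{[1]}}$, and so $\sum 1/(\log b_n)^{\rho^{[1]}+\varepsilon}<\infty$ for all $\varepsilon>0$, giving $\mathcal E(\log b_n)\le\rho^{[1]}$ after letting $\alpha\downarrow\rho^{[1]}$.

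For the reverse inequality $\rho^{[1]}\le\mathcal E(\log b_n)$ I would use \eqref{eq:Pbn}: from the Berezanski\u{\i} estimate we have
$$
P(z)\le c\Bigl(\sum_{n=0}^\infty \tfrac{1}{b_{n-1}}\Bigr)^{1/2}\Pi(|z|),\qquad \Pi(z)=\prod_{k=0}^\infty\Bigl(1+\tfrac{z}{b_{k-1}}\Bigr),
$$
so it suffices to estimate the logarithmic order of the canonical product $\Pi$ in terms of $\mathcal E(\log b_n)$. Set $\gamma:=\mathcal E(\log b_n)$; if $\gamma=\infty$ there is nothing to prove, so assume $\gamma<\infty$. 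For any $\delta>\gamma$ we have $\sum 1/(\log b_n)^\delta<\infty$, hence $n/(\log b_n)^\delta\to 0$ (using that $(b_n)$, and therefore $(\log b_n)$, is eventually increasing by Lemma~\ref{thm:inc}, so the summand $1/(\log b_n)^\delta$ is eventually decreasing and a convergent series of decreasing terms forces $n$ times the term to $0$). Thus $\log b_n\ge_{\text{\tiny as}} n^{1/\delta}$, i.e. with $u_n=1/b_{n-1}$ we have $\beta(u_n)=(\log b_{n-1})^{-\delta}$ satisfying $\sum\beta(u_n)<\infty$ for the order function $\alpha(r)=(\log r)^\delta$, dual $\beta(r)=(-\log r)^{-\delta}$. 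Proposition~\ref{thm:ineq1} then yields $\log\Pi(r)=O(\alpha(r)\log r)=O((\log r)^{\delta+1})$, so $\Pi$, and with it $P$, has order bounded by $\alpha(r)=(\log r)^\delta$; by the formula $\w{f}=\limsup\log\log M_f(r)/\log\log r-1$ this means $\rho^{[1]}=\w{P}\le\delta$. Letting $\delta\downarrow\gamma$ gives $\rho^{[1]}\le\mathcal E(\log b_n)$, completing the proof.

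The main obstacle is the bookkeeping at the logarithmic scale: one must check carefully that the hypotheses of Theorem~\ref{thm:order2}(ii) are met by $\alpha(r)=(\log r)^{\rho^{[1]}+\varepsilon}$ (the condition $\alpha(r^2)=O(\alpha(r))$), that ``$P$ has order bounded by $\alpha$'' in the sense of Definition~\ref{thm:def1} really is equivalent to finite logarithmic order $\le\rho^{[1]}$ (which is noted right after Definition~\ref{thm:def1}), and — in the converse direction — that Proposition~\ref{thm:ineq1}'s conclusion ``$\log\prod(1+ru_n)=O(\alpha(r)\log r)$'' translates back through Definition~\ref{thm:def1} and the $\limsup$-formula for $\w{}$ into the bound $\w{\Pi}\le\delta$. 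None of these steps is deep, but the two normalizations of logarithmic order (the one of \cite{B:P:H} and the shifted one of this paper) and the interplay between an order function $\alpha(r)$ and its dual $\beta(r)$ make it easy to be off by one in an exponent, so the care is in matching everything up. A secondary point worth a line is that one does not need to separately handle $P_n^2(0)$, $Q_n^2(0)$ or indeterminacy here: indeterminacy is already guaranteed by Theorem~\ref{thm:ber}, and the whole argument runs through $P$ alone via \eqref{eq:ckP} and \eqref{eq:Pbn}.
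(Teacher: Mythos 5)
Your proof is correct and follows essentially the same route as the paper: the bound $\mathcal E(\log b_n)\le\rho^{[1]}$ comes from Theorem~\ref{thm:order2}(ii) applied to the order function $(\log r)^{\alpha}$ with $\alpha>\rho^{[1]}$ (your mid-stream correction of the normalization, i.e.\ order bounded by $(\log r)^{\alpha}$ rather than $(\log r)^{\alpha+1}$, is the right one and should be what remains in a final write-up), and the reverse bound comes from \eqref{eq:Pbn} together with an estimate of $\w{\Pi}$. The only deviation is that where the paper simply cites Proposition~5.4 of \cite{B:P:H} for $\w{\Pi}=\mathcal E(\log b_n)$, you derive the needed inequality $\w{\Pi}\le\mathcal E(\log b_n)$ directly from Proposition~\ref{thm:ineq1}, which is exactly the self-contained variant the paper itself employs in the double-logarithmic analogue, Theorem~\ref{thm:Berdlogorder}.
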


\begin{proof} We first establish that $\rho^{[1]}\ge \mathcal
  E(\log b_n)$, which is clear if $\rho^{[1]}=\infty$. If
  $\rho^{[1]}<\infty$ we know that for every $\varepsilon>0$
$$
M_P(r)\le_{\text{\tiny as}}r^{(\log r)^{\rho^{[1]}+\varepsilon}}.
$$
In other words $P$ has order bounded by $\alpha(r)=(\log
r)^{\rho^{[1]}+\varepsilon}$, so by
Theorem~\ref{thm:order2}(ii) we know that 
$$
\beta(1/b_n)=\frac{1}{(\log b_n)^{\rho^{[1]}+\varepsilon}}\in\ell^{1+\varepsilon},
$$
hence $\mathcal
E(\log b_n)\le(\rho^{[1]}+\varepsilon)(1+\varepsilon)$ for any
$\varepsilon>0$, thus $\mathcal E(\log b_n)\le \rho^{[1]}$.

From \eqref{eq:Pbn} we get $\w{P}\le \w{\Pi}$. 
However,  $\w{\Pi}= \mathcal E(\log b_n)$ by Proposition~5.4
 in \cite{B:P:H}.
\end{proof}

\begin{ex}\label{thm:exlogorderalpha}
{\rm For $a>1,\alpha>0$  let $b_n=a^{n^{1/\alpha}}$, and let
    $|a_n|\le a^{c n^{1/\alpha}}$ for some $0<c<1$.
The three-term recurrence relation \eqref{eq:3t} with these
coefficients determine orthogonal polynomials of an indeterminate
moment problem satisfying \eqref{eq:conv} and \eqref{eq:logconvex} or
\eqref{eq:logconcave} according to
$$
b_n^2\;\left\{\begin{array}{lll} = \\ < \\
    >\end{array}\right\}\;b_{n-1}b_{n+1}\;\Leftrightarrow\;\left\{
\begin{array}{lll} \alpha=1 \\ \alpha <1 \\ \alpha>1\end{array}\right..
$$
We find
$\mathcal E(b_n)=0$  and $\mathcal E(\log b_n)=\alpha$,  so by
Theorem~\ref{thm:beralpha1} and Theorem~\ref{thm:Berlogorder} the moment problem has order 0 and logarithmic
order $\rho^{[1]}=\alpha$.
}
\end{ex}

\begin{ex}\label{thm:exlogtype} {\rm
For $a>1$ and $\alpha >0$ consider the product
$$
f(r)=\prod_{n=1}^\infty \left (1+\frac{r}{a^{n^{1/\alpha}}}\right )
$$
appearing in Lemma~\ref{thm:ineq} with $u_n=a^{-n^{1/\alpha}}$.
Let 
$$
\alpha(r)=(\log^\alpha r) \,(\log \log r)^2
$$
be an order function of the type considered in Example~\ref{thm:ex1} (6). We can use $r_0=\exp(\max(e,2+\alpha))$ and $u_n<1/r_0$ for $n>n_0$ with
$$
n_0=\left(\frac{\max(e,2+\alpha)}{\log a}\right)^{\alpha}.
$$

For $N_r=\#\{n\,|\, a^{n^{1/\alpha}}\le r  \}$ we have
\begin{equation}\label{eq:Rys1}
\left (\frac{\log r}{\log a}\right
  )^{\alpha}-1<N_r\le \left (\frac{\log r}{\log a}\right
  )^{\alpha}.
\end{equation}
 Moreover,
$$
\beta(u_n)=\frac{1}{\alpha(u_n^{-1})}=\frac{1}{(\log
  a)^{\alpha}}\,\, \frac{1}{n\,[(1/\alpha)\log n+\log\log a]^{2}}
 $$
satisfies 
$$
C:=\sum_{n>(1/\log a)^\alpha}^\infty \beta(u_n)<\infty.
$$
The proof of Lemma~\ref{thm:ineq} gives
$$
\log f(r)\le \sum_{n=1}^{N_r} \log\left ( 2\frac{r}{a^{n^{1/\alpha}}}\right )+
C\alpha(r)=\sum_{n=1}^{N_r} \log\left ( \frac{r}{a^{n^{1/\alpha}}}\right )+N_r\log 2+C\alpha(r).
 $$

On the other hand
$$ \log f(r) \ge \sum_{n=1}^{N_r} \log\left (1+\frac{r}{a^{n^{1/\alpha}}}\right )\ge 
\sum_{n=1}^{N_r}\log \left (\frac{r}{a^{n^{1/\alpha}}}\right ).
$$
We have
$$
  \sum_{n=1}^{N_r}\log \left (\frac{r}{ a^{n^{1/\alpha}}}\right ) =
N_r\log r- \log a\sum_{n=1}^{N_r} {n^{1/\alpha}}  
  $$
  and
  $$
\frac{1}{1+1/\alpha}N_r^{1+1/\alpha}\le \sum_{n=1}^{N_r} n^{1/\alpha} \le \frac{1}{1+1/\alpha}(N_r+1)^{1+1/\alpha}.
$$
  Therefore, in view of \eqref{eq:Rys1} we get
   $$
\log f(r) = \frac{1}{(\alpha +1)(\log a)^{\alpha}}\,(\log
   r)^{1+\alpha}\,[1+o(1)],
 $$
showing that the logarithmic order is $\alpha$ (as we already know
from Example~\ref{thm:exlogorderalpha}), and the logarithmic type is 
$$
\frac{1}{(\alpha+1)(\log a)^{\alpha}}.
$$
}
\end{ex}
 
 \begin{ex}\label{thm:exdlo} {\rm For $a,b>1$ let $b_n=a^{b^n}$ and $|a_n|\le a^{cb^n}$
     with $bc<1$. In this case $(b_n)$ is logarithmic convex, and the
     coefficients lead to an indeterminate moment problem with order
     as well as logarithmic order equal to 0.

This motivates a study of functions bounded by the order function
$\alpha(r)=(\log\log r)^\alpha$, considered in the next section.}
\end{ex}

\section{Double logarithmic order}\label{sec-double}

For an unbounded continuous function $f$  we define the {\em
 double  logarithmic order}  $\ww{f}$ as
$$
\ww{f} = \inf \{ \alpha >0\, | \, M_f(r)\le_{\text{\tiny as}}r^{(\log\log r)^{\alpha}}\,
\},
$$
where $\ww{f}=\infty$, if there are no $\alpha>0$ satisfying the asymptotic inequality. Of course $\ww{f}<\infty$ is only possible if $\w{f}=0$.

In case $0<\ww{}=\ww{f}<\infty$ we define the {\em double logarithmic type} as
$$
\dd{f}=\inf \{ c >0\, | \, M_f(r)\le_{\text{\tiny as}} r^{c(\log\log r)^{\ww{}}}\,
\}.
$$

\begin{thm}\label{thm:dlog} For an indeterminate moment problem of
  logarithmic order zero the functions $A,B,C,D,P,Q$ have the same
  double logarithmic order $\ww{}$ and type $\dd{}$ called the
  double logarithmic order and type of the moment problem.
\end{thm}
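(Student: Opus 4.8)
The plan is to adapt the scheme by which $A,B,C,D,P,Q$ were shown to have a common order in \cite{B:P} and a common logarithmic order in \cite{B:P:H}, now on the slower scale $M_f(r)\le_{\text{\tiny as}}r^{c(\log\log r)^{\alpha}}$. Two families of estimates drive everything. First, the definitions \eqref{eq:ABCD} and the Cauchy--Schwarz inequality give, with the finite constants $P(0)=\bigl(\sum_kP_k^2(0)\bigr)^{1/2}$ and $Q(0)=\bigl(\sum_kQ_k^2(0)\bigr)^{1/2}$ from Theorem~\ref{thm:indet},
$$
|B(z)|\le 1+|z|Q(0)P(z),\quad |D(z)|\le |z|P(0)P(z),\quad |A(z)|\le |z|Q(0)Q(z),\quad |C(z)|\le 1+|z|P(0)Q(z),
$$
so that $M_B(r)\le 1+rQ(0)M_P(r)$, and similarly for the other three. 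Secondly, the classical reproducing kernel identities
$$
\sum_{k=0}^\infty P_k(z)P_k(w)=\frac{B(z)D(w)-B(w)D(z)}{z-w},\qquad \sum_{k=0}^\infty Q_k(z)Q_k(w)=\frac{A(z)C(w)-A(w)C(z)}{z-w},
$$
in which $A,B,C,D$ have real Taylor coefficients, specialize at $w=\bar z$ to
$$
P(z)^2=\frac{\operatorname{Im}\bigl(B(z)\overline{D(z)}\bigr)}{\operatorname{Im}z},\qquad Q(z)^2=\frac{\operatorname{Im}\bigl(A(z)\overline{C(z)}\bigr)}{\operatorname{Im}z};
$$
in particular $P(z)^2\le M_B(r)M_D(r)$ and $Q(z)^2\le M_A(r)M_C(r)$ whenever $|z|=r$ and $|\operatorname{Im}z|\ge 1$.

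I would first record that $P$ and $Q$ are transcendental: by \eqref{eq:lead}, $M_P(r)\ge\tfrac12 b_{n,n}r^n$ for every $n$, so $\log M_P(r)/\log r\to\infty$, and likewise for $Q$. Hence on the double logarithmic scale the prefactors $|z|$ and the additive constants above are harmless, since $(\log\log r)^{\alpha+\varepsilon}-(\log\log r)^{\alpha}\to\infty$ absorbs any term of size $O(\log r)$: from $M_P(r)\le_{\text{\tiny as}}r^{c(\log\log r)^{\ww{}}}$ one obtains $M_B(r),M_D(r)\le_{\text{\tiny as}}r^{c'(\log\log r)^{\ww{}}}$ for every $c'>c$. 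This yields $\ww B,\ww D\le\ww P$ and $\ww A,\ww C\le\ww Q$, together with the corresponding inequalities between the double logarithmic types when $0<\ww{}<\infty$.

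For the converse I would use that $P^2=\sum_k|P_k|^2$ is subharmonic, so $M_P(r)^2=\max_{|z|=r}P(z)^2$. Splitting $|z|=r$ into the part with $|\operatorname{Im}z|\ge 1$ and the thin part near the real axis --- where $\operatorname{Im}\bigl(B(z)\overline{D(z)}\bigr)$, which vanishes for real $z$, equals $(\operatorname{Im}z)\,\partial_y\operatorname{Im}\bigl(B\overline D\bigr)$ at an intermediate point and is estimated by Cauchy bounds for $B',D'$ on $|w|\le 2r$ --- one gets $M_P(r)^2\le K\,M_B(2r)M_D(2r)$ for an absolute constant $K$, and symmetrically $M_Q(r)^2\le K\,M_A(2r)M_C(2r)$. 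Since $M_f(2r)$ has the same double logarithmic order and type as $M_f(r)$, combining with the previous paragraph forces $\ww P=\max(\ww B,\ww D)$ and $\ww Q=\max(\ww A,\ww C)$; feeding in addition the Nevanlinna relation $A(z)D(z)-B(z)C(z)=1$ into this bookkeeping, as in \cite{B:P}, upgrades this to $\ww B=\ww D=\ww P$ and $\ww A=\ww C=\ww Q$. The types within each group then coincide as well: since $\dd B,\dd D\le\dd P$, dividing $2\log M_P(r)\le\log M_B(2r)+\log M_D(2r)+O(1)$ by $(\log\log r)^{\ww{}}\log r$ and taking $\limsup$ gives $2\dd P\le\dd B+\dd D\le 2\dd P$, whence $\dd B=\dd D=\dd P$, and similarly $\dd A=\dd C=\dd Q$.

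Finally I would merge the two groups, i.e.\ prove $\ww P=\ww Q$ and $\dd P=\dd Q$. For this I would invoke the relation $P^{(1)}(z)=b_0 Q(z)$ between the original moment problem and the once-shifted one (recorded in Section~\ref{sec-pre}), together with the fact --- essentially due to Pedersen \cite{Ped0} --- that the $A,B,C,D$ functions of the two problems are related by fractional linear transformations with polynomial coefficients; by the absorption mechanism of the second paragraph these transformations preserve the double logarithmic order and type of each function, so the shifted problem has the same double logarithmic order and type as the original, and $\ww Q=\ww{P^{(1)}}$ then equals the common value, and likewise for the types. I expect this last step --- certifying that the quantities attached a priori to the $P$-side and to the $Q$-side of the Nevanlinna matrix actually agree --- to be the main obstacle, exactly as it is in \cite{B:P} and \cite{B:P:H}; everything else is the routine checking that the perturbations ($\log r$ factors, additive constants, the passage from $r$ to $2r$) are negligible on the very slow scale $(\log\log r)^{\alpha}$.
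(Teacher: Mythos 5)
Your reduction of $P$ to the pair $(B,D)$ and of $Q$ to the pair $(A,C)$ is sound: the Cauchy--Schwarz bounds from \eqref{eq:ABCD}, the kernel identities at $w=\bar z$, the mean-value/Cauchy-estimate treatment of the thin region near the real axis, and the observation that factors $|z|$, additive constants and the dilation $r\mapsto 2r$ are absorbed on the scale $r^{c(\log\log r)^{\alpha}}$ are all correct, and this is indeed the routine part. But it only yields $\ww{P}=\max(\ww{B},\ww{D})$ and $\ww{Q}=\max(\ww{A},\ww{C})$ (and your neat $2\dd{P}\le\dd{B}+\dd{D}\le 2\dd{P}$ argument for the types already presupposes that the orders within each pair agree). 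The two steps that carry the real content of the theorem are not proved. First, the upgrade to $\ww{B}=\ww{D}$ and $\ww{A}=\ww{C}$ is only asserted: the relation $A(z)D(z)-B(z)C(z)=1$ gives upper bounds on products and by itself cannot exclude, say, $\ww{B}<\ww{D}$. What \cite{B:P} and \cite{B:P:H} actually use at this point is that the pairs $B,D$ and $A,C$ (and also $A,B$ and $C,D$, via the values $t=\infty,0$ of the Nevanlinna parametrization) have real interlacing zeros, together with a comparison lemma for genus-zero canonical products with interlacing zeros. Transferring that lemma to the double-logarithmic scale is exactly what the present theorem requires: for the order it follows from $\ww{f}=\mathcal E(\log\log|z_n|)$ (Theorem~\ref{thm:dlc1}), since interlacing makes the counting functions differ by at most one, but the type statement needs a separate argument in the spirit of Hayman's appendix to \cite{B:P:H}; your sketch neither supplies nor even identifies this.

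Second, the merging step $\ww{P}=\ww{Q}$ via the shifted problem is circular as written. The relations of \cite{Ped0} express each of $A^{(1)},B^{(1)},C^{(1)},D^{(1)}$ as linear combinations of $A,B,C,D$ with polynomial coefficients (and conversely), and such relations only bound each function by the \emph{maximum} of the double logarithmic orders of the four functions on the other side; they do not ``preserve the double logarithmic order and type of each function''. Consequently you cannot conclude at that stage that ``the shifted problem has the same double logarithmic order and type as the original'' --- that phrase already presupposes that each problem has a single well-defined $\ww{}$ and $\dd{}$ shared by its six functions, i.e.\ the theorem itself, and the final identification $\ww{Q}=\ww{P^{(1)}}=$ common value of the shifted problem begs the question. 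The cross-pair equality must come from a genuine two-sided comparison: either again from interlacing (zeros of $C$ and $D$ interlace, as do zeros of $A$ and $B$), or from an inequality such as
\begin{equation*}
Q(z)\le \frac{P(z)+1}{|\operatorname{Im}z|},
\end{equation*}
which follows from $\sum_n\bigl|\int P_n(x)(z-x)^{-1}\,d\mu(x)\bigr|^2=\int|z-x|^{-2}\,d\mu(x)\le|\operatorname{Im}z|^{-2}$, applied both to the original and to the shifted problem (using $P^{(1)}=b_0Q$ and the expression of $P_{n+1}$ through $P_n^{(1)},Q_n^{(1)}$ with polynomial coefficients). As it stands, the proposal proves the grouped statement but leaves the two identifications that make all six quantities equal unestablished.
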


The proof of this result can be done exactly in the same way as the corresponding proof for logarithmic order and type in \cite{B:P:H}, so 
we leave the details to the reader.

For an entire transcendental function $f(z)=\sum_{n=0}^\infty a_nz^n$ of logarithmic
order 0 the double logarithmic order and type can be expressed in
terms of the coefficients $a_n$ by the following formulas.

\begin{thm}\label{thm:dlogcoef}
\begin{eqnarray}\label{eq:dlogorder}
\ww{f} 
= \limsup_{n\to \infty}\frac{\log n}{\log\log \log\left(
    \frac{1}{\sqrt[n]{|a_n|}}\right)},
\end{eqnarray}
and if $0<\ww{}=\ww{f}<\infty$
\begin{eqnarray}\label{eq:dlogtype}
\dd{f}=
 \limsup_{n\to
  \infty}\frac{n}{\left(\log\log \frac{1}{\sqrt[n]{|a_n|}}\right)^{\ww{}}}.
\end{eqnarray}
\end{thm}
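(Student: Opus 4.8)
The plan is to deduce both formulas from the corresponding well-known statements for ordinary order and for logarithmic order, by composing with suitable monotone transformations of the variable $r$. Recall that for an entire function $f(z)=\sum_n a_nz^n$ one has the classical expression $\rho_f = \limsup_n \frac{\log n}{\log(1/\sqrt[n]{|a_n|})}$, cf. \eqref{eq:deforder}, and the refined logarithmic version $\w{f} = \limsup_n \frac{\log n}{\log\log(1/\sqrt[n]{|a_n|})}$, cf. \eqref{eq:logorder}. Writing $L_n=\log(1/\sqrt[n]{|a_n|})=-\tfrac1n\log|a_n|$ (which tends to $+\infty$ since $f$ is entire), these say $\rho_f=\limsup_n \frac{\log n}{\log L_n}$ and $\w{f}=\limsup_n \frac{\log n}{\log\log L_n}$. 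The pattern makes the next term evident: one expects $\ww{f}=\limsup_n \frac{\log n}{\log\log\log L_n}$, which is exactly \eqref{eq:dlogorder}.

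First I would set up the analytic machinery. For an unbounded entire $f$ of logarithmic order zero, start from the two-sided Cauchy-type estimates relating $M_f(r)$ and the coefficients: on one side $|a_n|\le M_f(r)/r^n$ for every $r$, and on the other side (the standard comparison, e.g. via the maximal term $\mu(r)=\max_n|a_n|r^n$ and the fact that for order zero $\log M_f(r)$ and $\log\mu(r)$ have the same growth, cf. \cite{Le}) $\log M_f(r)$ is controlled by $\sup_n(\log|a_n|+n\log r)$ up to lower-order corrections. Taking logs in $|a_n|\le M_f(r)/r^n$ and optimizing in $r$ gives, for the decreasing rearrangement, $\log L_n \sim \log\log M_f(\text{something})$ in the relevant asymptotic sense. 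Concretely, $M_f(r)\le_{\text{as}} r^{(\log\log r)^{\alpha}}$ is equivalent (by taking $\log\log$ twice and tracking the dominant terms) to $\log\log\log L_n \ge_{\text{as}} \frac{1}{\alpha}\log n$ along the rearranged sequence, which upon taking infimum over admissible $\alpha$ yields \eqref{eq:dlogorder}. The argument is the same bookkeeping as in the proof of \eqref{eq:logorder} in \cite{B:P:H}, with one extra application of $\log$.

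For the type formula \eqref{eq:dlogtype}, once $\ww{}=\ww{f}$ is known to be finite and positive, I would refine the above estimate by keeping the multiplicative constant. The condition $M_f(r)\le_{\text{as}} r^{c(\log\log r)^{\ww{}}}$, i.e. $\log M_f(r)\le_{\text{as}} c(\log\log r)^{\ww{}}\log r$, translates through the coefficient–maximal-term correspondence into $\log\log L_n \ge_{\text{as}} (n/c)^{1/\ww{}}$, equivalently $n/(\log\log L_n)^{\ww{}}\le_{\text{as}} c$; taking the $\limsup$ and then the infimum over admissible $c$ gives $\dd{f}=\limsup_n \frac{n}{(\log\log L_n)^{\ww{}}}$, which is \eqref{eq:dlogtype}. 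Note that no normalizing factor like the $\frac{(\rho^{[1]})^{\rho^{[1]}}}{(\rho^{[1]}+1)^{\rho^{[1]}+1}}$ appearing in \eqref{eq:logtype} survives here, because the extra $\log r$ factor in the exponent is negligible against the double-log power, so the optimization in $r$ is essentially trivial at leading order.

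\textbf{Main obstacle.} The delicate point is the passage between $M_f(r)$ and the coefficients when the growth is this slow: one must verify that the standard inequalities relating $\log M_f(r)$, $\log\mu(r)$, and the sequence $(\log|a_n|)$ still have the right precision after two or three nested logarithms, i.e. that the error terms (the $O(\log r)$ gaps between $\log M_f$ and $\log\mu$, and the discretization error in optimizing over $n$) are swallowed harmlessly. Since the claim here is merely that the proofs are standard and the formulas are given, I would carry out this verification carefully in the Appendix (as the text promises) but only sketch it in the body, pointing to the analogous computations in \cite{B:P:H} and \cite{Le} and emphasizing that each additional logarithm leaves the leading-order asymptotics intact.
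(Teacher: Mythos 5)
Your plan is essentially the paper's own proof: the appendix establishes both halves exactly as you describe, getting the lower bound on $\log\log L_n$ from the Cauchy estimate $|a_n|\le M_f(r)/r^n$ with a near-optimal radius (the paper takes $\log\log r=n^{1/\alpha}-1$, justified by a small calculus lemma), and the converse by estimating $\sum|a_n|r^n$ directly from the coefficient bound $|a_n|\le\exp(-n\exp(n^{1/\mu}))$, split at the index where $\exp(n^{1/\mu})$ reaches $\log r$, with all $\varepsilon$-losses absorbed in the limsup; the type formula \eqref{eq:dlogtype} is the same computation keeping the constant, and your remark that no normalizing factor analogous to the one in \eqref{eq:logtype} survives is precisely what comes out. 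Your routing of the converse through the maximal term rather than the full series is only a cosmetic variant, and note that your opening suggestion of ``deducing'' the result from the ordinary or logarithmic order formulas by a monotone change of the variable $r$ is not actually available (and your sketch does not really use it) --- the two-sided estimate has to be redone at the double-logarithmic scale, which is what the paper does.
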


The proof is given in the Appendix.

The results of Section~\ref{sec-phi} about $\Phi$ can also be generalized:

\begin{thm}\label{thm:dord2} Suppose the logarithmic order of the moment problem is zero. Then $\Phi$
  has the same double logarithmic order $\ww{}$ and type $\dd{}$ as the moment problem.
\end{thm}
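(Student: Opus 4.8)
The plan is to run, on the new scale $r^{(\log\log r)^\alpha}$, exactly the two-sided squeezing argument that proves Theorem~\ref{thm:ord1} and Theorem~\ref{thm:ord2}. Recall that by Theorem~\ref{thm:dlog} the moment problem has a well-defined double logarithmic order $\ww{}=\ww{D}=\ww{P}$ (and type $\dd{}=\dd{D}=\dd{P}$ when $0<\ww{}<\infty$). One direction will use the function $D$ together with the inequality $M_D(r)\le c_0rM_\Phi(r)$ from \eqref{eq:D-Phi}; the other will use $P$ together with \eqref{eq:ckP} and the auxiliary function $\Psi$ of \eqref{eq:Psi}; and the passage between $\Phi$ and $\Psi$ will be made through the coefficient formulas \eqref{eq:dlogorder} and \eqref{eq:dlogtype} of Theorem~\ref{thm:dlogcoef}, just as \eqref{eq:logorder}, \eqref{eq:logtype} are used in the proof of Theorem~\ref{thm:ord2}.

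For the order: first $\ww{}\le\ww{\Phi}$. This is clear if $\ww{\Phi}=\infty$; otherwise, given $\varepsilon>0$, \eqref{eq:D-Phi} yields $M_D(r)\le c_0r\,r^{(\log\log r)^{\ww{\Phi}+\varepsilon}}$ for large $r$, and the extra factor $c_0r$ is absorbed into $r^{(\log\log r)^{\ww{\Phi}+2\varepsilon}}$ for $r$ large, since $(\log\log r)^{\ww{\Phi}+2\varepsilon}-(\log\log r)^{\ww{\Phi}+\varepsilon}\to\infty$; hence $\ww{}=\ww{D}\le\ww{\Phi}$. For the reverse, \eqref{eq:ckP} gives $M_\Psi(r)=\sum_k c_k^2r^{2k}=\tfrac1{2\pi}\int_0^{2\pi}P^2(re^{it})\,dt\le M_P(r)^2$, so since $P$ has double logarithmic order $\ww{}$ we get $M_\Psi(r)\le r^{2(\log\log r)^{\ww{}+\varepsilon}}\le_{\text{\tiny as}} r^{(\log\log r)^{\ww{}+2\varepsilon}}$, i.e. $\ww{\Psi}\le\ww{}$. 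Since the nonzero Taylor coefficients of $\Psi$ are the $c_k^2$ at index $2k$, one has $1/\sqrt[2k]{c_k^2}=1/\sqrt[k]{c_k}$, and as $\log(2k)/\log k\to1$ formula \eqref{eq:dlogorder} applied to $\Phi$ and to $\Psi$ gives $\ww{\Psi}=\ww{\Phi}$. Combining, $\ww{\Phi}=\ww{\Psi}\le\ww{}\le\ww{\Phi}$, so all are equal.

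For the type, assume $0<\ww{}<\infty$ so that $\dd{}$ is defined. From \eqref{eq:D-Phi} one gets $\dd{}=\dd{D}\le\dd{\Phi}$ just as above, the factor $c_0r$ now being absorbed because $\ww{}>0$ makes $(\log\log r)^{\ww{}}\to\infty$. From \eqref{eq:ckP} and $M_\Psi(r)\le M_P(r)^2$ one gets $\dd{\Psi}\le 2\dd{}$, while \eqref{eq:dlogtype}, applied with index $2k$ and coefficient $c_k^2$, gives $\dd{\Psi}=2\dd{\Phi}$; hence $\dd{\Phi}\le\dd{}$, and together with the reverse inequality $\dd{\Phi}=\dd{}$.

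The argument is essentially formal, and I expect no real obstacle; the one point I would check carefully is the bookkeeping in inserting the sparse coefficient sequence of $\Psi$ (value $c_k^2$ at index $2k$, zero otherwise) into \eqref{eq:dlogorder} and \eqref{eq:dlogtype} and confirming that the relevant $\limsup$ over nonzero coefficients reproduces $\ww{\Phi}$ and $2\dd{\Phi}$, respectively — this is the identity $\ww{\Psi}=\ww{\Phi}$, $\dd{\Psi}=2\dd{\Phi}$ that already underlies the proofs of Theorem~\ref{thm:ord1} and Theorem~\ref{thm:ord2}. Accordingly, in the actual write-up I would simply remark that the proof is word for word that of Theorem~\ref{thm:ord2}, with $(\log r)^\alpha$ replaced throughout by $(\log\log r)^\alpha$ and Theorem~\ref{thm:dlogcoef} used in place of the coefficient formulas \eqref{eq:logorder}–\eqref{eq:logtype}.
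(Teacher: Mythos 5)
Your proposal is correct and follows exactly the paper's own argument: the inequality $M_D(r)\le c_0rM_\Phi(r)$ gives $\ww{}\le\ww{\Phi}$, the bound on $P$ together with \eqref{eq:ckP} gives $\ww{\Psi}\le\ww{}$, and Theorem~\ref{thm:dlogcoef} yields $\ww{\Psi}=\ww{\Phi}$ (and $\dd{\Psi}=2\dd{\Phi}$ for the type). The paper only remarks that "the proof concerning type follows using similar ideas," and your write-up supplies precisely those details, including the correct absorption of the factor $c_0r$ using $\ww{}>0$.
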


\begin{proof} From the inequality $M_D(r)\le c_0rM_{\Phi}(r)$, cf. \eqref{eq:D-Phi}, we get $\ww{}=\ww{D}\le \ww{\Phi}$.
For any $\varepsilon>0$ we have 
$$
P(re^{i\theta})\le r^{(\log\log r)^{\ww{}+\varepsilon}}
$$
for $r$ sufficiently large, which by \eqref{eq:ckP} leads to
$\ww{\Psi}\le\ww{}$, where $\Psi$ is given by \eqref{eq:Psi}. From Theorem~\ref{thm:dlogcoef} we see that
$\ww{\Phi}=\ww{\Psi}$ and hence $\ww{}=\ww{\Phi}$. The proof concerning type follows using similar ideas. 
\end{proof}

\begin{thm}\label{thm:Berdlogorder} Assume that the coefficients of \eqref{eq:3t}  satisfy 
$$
 \sum_{n=1}^\infty \frac{1+|a_n|}{\sqrt{b_nb_{n-1}}}<\infty
$$
and that either  \eqref{eq:logconvex} or \eqref{eq:logconcave} holds.

Then the double logarithmic order $\ww{}$ of the moment problem is
given as $\ww{}=\mathcal E(\log\log b_n)$.
\end{thm}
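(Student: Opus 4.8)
The plan is to mimic the proof of Theorem~\ref{thm:Berlogorder}, replacing one logarithm by two, using the double logarithmic analogue of the order-function machinery. The strategy splits into the two inequalities $\ww{}\ge\mathcal E(\log\log b_n)$ and $\ww{}\le\mathcal E(\log\log b_n)$, and we may assume throughout that the moment problem has logarithmic order zero, since otherwise $\ww{}=\infty$ and also $b_n$ grows fast enough that $\mathcal E(\log\log b_n)=\infty$ (using that $\w{P}$ controls $b_n$ via \eqref{eq:Pbn} and Proposition~5.4 of \cite{B:P:H}, together with Theorem~\ref{thm:order2}).

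First I would prove $\ww{}\ge\mathcal E(\log\log b_n)$. If $\ww{}=\infty$ there is nothing to prove, so assume $\ww{}<\infty$. Then for every $\varepsilon>0$ we have $M_P(r)\le_{\text{\tiny as}}r^{(\log\log r)^{\ww{}+\varepsilon}}$, i.e.\ $P$ has order bounded by the order function $\alpha(r)=(\log\log r)^{\ww{}+\varepsilon}$, which is of the type considered in Example~\ref{thm:ex1}(3). Its dual is $\beta(r)=(\log\log(1/r))^{-(\ww{}+\varepsilon)}$, and $\alpha(r^2)=O(\alpha(r))$ since $\log\log(r^2)=\log 2+\log\log r\sim\log\log r$, so case (ii) of Theorem~\ref{thm:order2} applies and gives
$$
\beta(1/b_n)=\frac{1}{(\log\log b_n)^{\ww{}+\varepsilon}}=O(1/n),
$$
hence $\sum 1/(\log\log b_n)^{(\ww{}+\varepsilon)(1+\varepsilon)}<\infty$, so $\mathcal E(\log\log b_n)\le(\ww{}+\varepsilon)(1+\varepsilon)$. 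Letting $\varepsilon\to 0$ yields $\mathcal E(\log\log b_n)\le\ww{}$.

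For the reverse inequality $\ww{}\le\mathcal E(\log\log b_n)$, I would use \eqref{eq:Pbn}, which gives $\ww{P}\le\ww{\Pi}$ where $\Pi(z)=\prod_{k=0}^\infty(1+z/b_{k-1})$; combined with Theorem~\ref{thm:dlog} this gives $\ww{}=\ww{P}\le\ww{\Pi}$. It then remains to identify $\ww{\Pi}=\mathcal E(\log\log b_n)$. This is the double-logarithmic analogue of "Proposition~5.4 in \cite{B:P:H}" and is the main obstacle: one needs that for a canonical product $\Pi(z)=\prod(1+z/\lambda_k)$ with $\lambda_k\uparrow\infty$, the double logarithmic order equals the exponent of convergence of $(\log\log\lambda_k)$. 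The upper bound on $\ww{\Pi}$ comes from Lemma~\ref{thm:ineq} (or Proposition~\ref{thm:ineq1}) applied with $u_n=1/b_{n-1}$ and the order function $\alpha(r)=(\log\log r)^{\mathcal E(\log\log b_n)+\varepsilon}$: one checks that $\mathcal E(\log\log b_n)+\varepsilon$ being above the exponent of convergence of $(\log\log b_n)$ gives $\sum\beta(u_n)<\infty$ for the corresponding dual $\beta$, whence $\log\Pi(r)=O(\alpha(r)\log r)$ and $\ww{\Pi}\le\mathcal E(\log\log b_n)$. The matching lower bound on $\ww{\Pi}$ follows from the elementary estimate $\log\Pi(r)\ge\log\prod_{b_{k-1}\le r}(1+r/b_{k-1})\ge n(r)\cdot\log 2$ combined with Lemma~\ref{thm:expc=or}, which expresses $\mathcal E(\log\log b_n)$ as $\limsup\log\#\{n:\log\log b_n\le t\}/\log t$, unwound to $t=\log\log r$; one gets $M_\Pi(r)\ge_{\text{\tiny as}}r^{c(\log\log r)^{\mathcal E(\log\log b_n)-\varepsilon}}$ for suitable $c>0$, hence $\ww{\Pi}\ge\mathcal E(\log\log b_n)$. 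Combining the two directions gives $\ww{}=\mathcal E(\log\log b_n)$, completing the proof.
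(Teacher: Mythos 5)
Your proposal is correct and follows essentially the same route as the paper: the inequality $\mathcal E(\log\log b_n)\le\ww{}$ is obtained exactly as you do, from Theorem~\ref{thm:order2}(ii) applied to the order function $\alpha(r)=(\log\log r)^{\ww{}+\varepsilon}$, and the reverse inequality comes from \eqref{eq:Pbn} together with Proposition~\ref{thm:ineq1} applied to $u_n=1/b_n$ and $\alpha(r)=(\log\log r)^{\mathcal E(\log\log b_n)+\varepsilon}$ (the paper notes that this bound $\ww{\Pi}\le\mathcal E(\log\log b_n)$ is also a consequence of Theorem~\ref{thm:dlc1}). The only caveat is that your additional lower bound $\ww{\Pi}\ge\mathcal E(\log\log b_n)$ is superfluous—the theorem already follows from $\ww{}\le\ww{\Pi}\le\mathcal E(\log\log b_n)\le\ww{}$—and as sketched it is too crude, since $\log\Pi(r)\ge n(r)\log 2$ lacks the factor $\log r$ needed to reach $r^{c(\log\log r)^{\mathcal E(\log\log b_n)-\varepsilon}}$ (and such a minorization can in any case only hold along a sequence of radii, the correct argument being as in Theorem~\ref{thm:dlc1} via $n(r)\log r\le N(r^2)\le \log M_\Pi(r^2)$, or via $n(\sqrt r)\log\sqrt r$); since that direction is not needed, this does not affect the validity of your proof.
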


\begin{proof} We first establish that $\ww{}\ge \mathcal
  E(\log\log b_n)$, which is clear if $\ww{}=\infty$. If
  $\ww{}<\infty$ we know that for every $\varepsilon>0$
$$
M_P(r)\le_{\text{\tiny as}}r^{(\log\log r)^{\ww{}+\varepsilon}}.
$$
In other words $P$ has order bounded by $\alpha(r)=(\log\log
r)^{\ww{}+\varepsilon}$, so by
Theorem~\ref{thm:order2}(ii) we know that 
$$
\beta(1/b_n)=\frac{1}{(\log\log b_n)^{\ww{}+\varepsilon}}\in\ell^{1+\varepsilon},
$$
hence $\mathcal
E(\log\log b_n)\le(\ww{}+\varepsilon)(1+\varepsilon)$ for any
$\varepsilon>0$, thus $\mathcal E(\log\log b_n)\le \ww{}$.

From \eqref{eq:Pbn} we get $\ww{P}\le\ww{\Pi}$,  hence 
$\ww{}=\ww{P} = \mathcal E(\log\log b_n)$, if we prove that 
$\ww{\Pi}\le \mathcal E(\log\log b_n)$.
This is a consequence of Theorem~\ref{thm:dlc1}, but follows directly in the following way: It is clear if
$\mathcal E(\log\log b_n)=\infty$. If $\rho=\mathcal
E(\log\log b_n)<\infty$ we use Proposition~\ref{thm:ineq1} for the
order function $\alpha(r)=(\log\log r)^{\rho+\varepsilon}$ and
$u_n=1/b_n$, and since 
$$
\sum_n \beta(u_n)=\sum_n\frac{1}{(\log\log
  b_n)^{\rho+\varepsilon}}<\infty
$$
we conclude that $\log M_{\Pi}(r)=O(\alpha(r)\log r)$, hence
$\ww{\Pi}\le \rho$, because $\varepsilon>0$ can be chosen arbitrarily
small.
 
\end{proof}

\begin{ex}\label{thm:db1} {\rm
Consider
$$
f(z)=\prod_{n=1}^\infty \left(1+\frac{z}{\exp(e^{n^{1/\alpha}})}\right),
$$ 
where $0<\alpha<\infty$. We prove that $\ww{f}=\alpha,\;\dd{f}=1$. Note that $b_n=\exp(e^{n^{1/\alpha}})$
is eventually log-convex because $\exp(x^{1/\alpha})$ is convex for $x>(\alpha-1)^\alpha$ when $\alpha>1$ and convex for $x>0$ when $0<\alpha\le 1$.
This means that the indeterminate moment problem with recurrence coefficients $a_n=0$ and $b_n$ as 
above has double logarithmic order equal to $\mathcal E(\log\log b_n)=\alpha$.

Define
 $$
\alpha(r)= (\log \log r)^{2\alpha},
$$
which is an order function with $r_0=\exp(\max(e,2\alpha))$.

For $N_r=\#\{n\,|\, \exp(e^{n^{1/\alpha}})\le r  \}$ we have
\begin{equation}
 (\log\log r)^\alpha -1 < N_r \le (\log\log r)^\alpha.
\end{equation}
 Moreover, for $u_n=1/b_n$ we have
$\beta(u_n)=1/\alpha(b_n)=1/n^2$.
Observe that $\max\{\log (2u_n)\}\le 0.$ Hence Lemma~\ref{thm:ineq} gives
$$
\log f(r)\le N_r\log r +
 C\alpha(r),
 $$
where
$$
C=\sum_{n\notin A_{r_0}}^\infty \beta(u_n)\le\sum_{n=1}^\infty\frac{1}{
  n^2}.
$$
Thus
\begin{equation}\label{eq:up}
\log f(r)\le  (\log\log r)^\alpha\log r + C(\log\log r)^{2\alpha}.
\end{equation}
To minorize $\log f(r)$ we need
\begin{eqnarray*}
\sum_{n=1}^N e^{n^{1/\alpha}} &\le& e^{N^{1/\alpha}}+\int_1^N e^{x^{1/\alpha}}\,dx= e^{N^{1/\alpha}}+ \alpha\int_e^{e^{N^{1/\alpha}}} (\log t)^{\alpha-1}dt\\
&\le& \left\{\begin{array}{ll} e^{N^{1/\alpha}}(1+\alpha) \;&\mbox{for}\;\; 0<\alpha\le 1\\
 e^{N^{1/\alpha}}(1+\alpha N^{1-1/\alpha}) \;&\mbox{for}\;\; 1<\alpha.
\end{array}\right.
\end{eqnarray*}
This gives
\begin{eqnarray*}\log f(r) &\ge& \sum_{n=1}^{N_r} \log\left (1+\frac{r}{\exp(e^{n^{1/\alpha}})}\right ) \ge 
N_r\log r -\sum_{n=1}^{N_r} e^{n^{1/\alpha}}\\
&\ge&  \left\{\begin{array}{ll} \log r\left((\log\log r)^{\alpha}-2-\alpha\right)  \;&\mbox{for}\;\; 0<\alpha\le 1\\
\log r\left((\log\log r)^{\alpha}-2-\alpha(\log\log r)^{\alpha-1}\right) \;&\mbox{for}\;\; 1<\alpha.
\end{array}\right.
\end{eqnarray*}
These inequalities together with \eqref{eq:up} leads to
$$
\lim_{r\to\infty}\frac{\log f(r)}{(\log\log r)^\alpha\log r}=1,
$$ 
showing the assertion about double logarithmic order and type of $f$.}
\end{ex}

 \section{Liv\v{s}ic's function}\label{sec-lif}

For an indeterminate moment sequence $(s_n)$
 Liv\v{s}ic  \cite{L} considered the function
\begin{equation}\label{eq:lif}
F(z)=\sum_{n=0}^\infty \frac{z^{2n}}{s_{2n}}.
\end{equation}
It is entire of minimal exponential type because $\lim n/\root{2n}
\of{s_{2n}}=0$, which holds by Carleman's criterion giving that 
$$
\sum_{n=0}^\infty 1/\root 2n\of{s_{2n}}<\infty.
$$
Moreover, $\root 2n\of{s_{2n}}$ is increasing for $n\ge 1$.

Liv\v{s}ic proved  that $\rho_F\le \rho$, where $\rho$ is the order of the moment problem. It is interesting
to know whether the equality sign holds. In fact, we do not know any example with $\rho_F <\rho$. We will rather consider a
modification of Liv\v{s}ic's function given by
\begin{equation}\label{eq:lif1}
L(z)=\sum_{n=0}^\infty \frac{z^{n}}{\sqrt{s_{2n}}}.
\end{equation}
It is easy to see that $\rho_L=\rho_F$ and that $\tau_F=2\tau_L$ by
the formulas \eqref{eq:deforder} and  \eqref{eq:type}.

We shall give a new proof of the inequality $\rho_F\le \rho$ using the
function $\Phi$ from Section 2. We shall also consider the entire
function

\begin{prop}\label{thm:newLif} For an indeterminate moment problem of
  order $\rho$ we have
\begin{enumerate}
\item[(i)] $1\le s_{2n}b_{n,n}^2\le c_n^2s_{2n}$.
\item[(ii)] $M_L(r)\le M_H(r)\le M_\Phi(r),\quad r\ge 0$.
\item[(iii)] $\rho_L\le\rho_H\le\rho_\Phi=\rho$.
\item[(iv)] $\w{L}\le\w{H}\le\w{\Phi}=\w{}$, provided $\rho=0$.
\item[(v)] $\ww{L}\le\ww{H}\le\ww{\Phi}=\ww{}$, provided $\w{}=0$.
\end{enumerate}
\end{prop}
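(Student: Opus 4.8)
The plan is to deduce everything from a single chain of coefficient inequalities, so I would start with (i). The upper bound $b_{n,n}^2\le c_n^2$ is immediate from \eqref{eq:cn}: the series $c_n^2=\sum_{k=n}^\infty b_{n,k}^2$ has only non-negative terms, one of which is $b_{n,n}^2$; multiplying by $s_{2n}$ gives $s_{2n}b_{n,n}^2\le c_n^2 s_{2n}$. For the lower bound I would use orthonormality. Expanding the monomial $x^n$ in the orthonormal basis $(P_k)$ and using that $P_n(x)=b_{n,n}x^n+(\text{terms of lower degree})$ with $b_{n,n}>0$, one finds that $\int P_n(x)\,x^n\,d\mu(x)=1/b_{n,n}$, since only the $P_n$-component of $x^n$ survives and its coefficient is the reciprocal of the leading coefficient of $P_n$. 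Cauchy--Schwarz, together with $\int P_n^2\,d\mu=1$ and $\int x^{2n}\,d\mu=s_{2n}$, then gives $1/b_{n,n}\le\sqrt{s_{2n}}$, that is $s_{2n}b_{n,n}^2\ge 1$.

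Next I would rewrite (i) as $1/\sqrt{s_{2n}}\le b_{n,n}\le c_n$, so that the non-negative Taylor coefficients of $L$, $H$ and $\Phi$ are ordered termwise. Since a power series with non-negative coefficients attains its maximum modulus over $\{|z|\le r\}$ at $z=r$, this yields at once
$$
M_L(r)=\sum_{n=0}^\infty\frac{r^n}{\sqrt{s_{2n}}}\le\sum_{n=0}^\infty b_{n,n}r^n=M_H(r)\le\sum_{n=0}^\infty c_n r^n=M_\Phi(r),\qquad r\ge 0,
$$
which is (ii) (and in passing shows $H$ is entire, as $\Phi$ is). Because order, logarithmic order and double logarithmic order are all monotone under pointwise domination of the maximum modulus, (ii) immediately gives $\rho_L\le\rho_H\le\rho_\Phi$, $\w{L}\le\w{H}\le\w{\Phi}$ and $\ww{L}\le\ww{H}\le\ww{\Phi}$. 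To finish I would invoke the growth results for $\Phi$ already established in Section~\ref{sec-phi}: $\rho_\Phi=\rho$ by Theorem~\ref{thm:ord1}, $\w{\Phi}=\w{}$ (valid when $\rho=0$) by Theorem~\ref{thm:ord2}, and $\ww{\Phi}=\ww{}$ (valid when $\w{}=0$) by Theorem~\ref{thm:dord2}.

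I do not anticipate a real obstacle. The only step that uses more than bookkeeping is the identity $\int P_n(x)\,x^n\,d\mu(x)=1/b_{n,n}$ in (i), which is precisely where orthonormality enters; the rest rests on the trivial fact that $M_f(r)=f(r)$ for a power series $f$ with non-negative coefficients and on the previously proved identities for $\Phi$. The genuinely delicate question in this circle of ideas --- whether these inequalities are in fact equalities, equivalently whether Liv\v{s}ic's bound $\rho_F\le\rho$ is sharp --- lies deeper and is addressed separately under the Berezanski\u{\i} hypotheses.
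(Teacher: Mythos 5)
Your proof is correct and follows essentially the same route as the paper: the identity $\int P_n(x)x^n\,d\mu(x)=1/b_{n,n}$ plus Cauchy--Schwarz for the lower bound in (i), the definition of $c_n$ for the upper bound, termwise comparison of non-negative Taylor coefficients (using $M_f(r)=f(r)$) for (ii), and then Theorems~\ref{thm:ord1}, \ref{thm:ord2} and \ref{thm:dord2} for the equalities $\rho_\Phi=\rho$, $\w{\Phi}=\w{}$, $\ww{\Phi}=\ww{}$. The only cosmetic difference is that you obtain the key identity by expanding $x^n$ in the orthonormal basis, while the paper writes $1=\int P_n^2\,d\mu=b_{n,n}\int x^nP_n\,d\mu$; these are the same computation.
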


\begin{proof} By orthogonality we have
$$
1=\int P_n^2(x)\,d\mu(x)=b_{n,n}\int x^nP_n(x)\,d\mu(x),
$$
so by the Cauchy-Schwarz inequality
$$
\frac{1}{b_{n,n}}\le\left(\int x^{2n}\,d\mu(x)\right)^{1/2}\left(\int P_n^2(x)\,d\mu(x)\right)^{1/2}=\sqrt{s_{2n}},
$$
which gives the first inequality of (i). The second follows from
\eqref{eq:cn}.

The maximum modulus $M_f$ for an entire function $f(z)=\sum a_nz^n$
with $a_n\ge 0$ is given by $M_f(r)=f(r),\;r\ge 0$, and therefore (ii)
follows from (i). Finally (iii), (iv) and (v) follow from (ii).
\end{proof}

The following result gives a sufficient condition for equality in
Proposition~\ref{thm:newLif}.
 
\begin{prop}\label{thm:=inLif} If
$$
\log\root{2n}\of{c_n^2s_{2n}}=o(\log n)
$$
and in particular if
$$
c_n^2s_{2n}=O(K^n)
$$
for some $K>1$,
then $\rho=\rho_L$.

If $\rho=0$ then $\w{}=\w{L}$, and if $\w{}=0$ then $\ww{}=\ww{L}$. 
\end{prop}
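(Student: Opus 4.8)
The plan is to show that the hypothesis forces equality in part (iii) (resp. (iv), (v)) of Proposition~\ref{thm:newLif} by a coefficient comparison. Recall from Proposition~\ref{thm:newLif}(i) that $1/\sqrt{s_{2n}}$, which is the $n$-th coefficient of $L$, satisfies
$$
\frac{1}{c_n}\le \frac{1}{\sqrt{s_{2n}}}\le \frac{1}{\sqrt{b_{n,n}^{-2}}}=b_{n,n}\le c_n,
$$
so in particular $1/\sqrt{s_{2n}}$ and $c_n$ differ multiplicatively by the factor $\sqrt{c_n^2s_{2n}}$. The order, logarithmic order and double logarithmic order of an entire function $f=\sum a_nz^n$ are computed by the formulas \eqref{eq:deforder}, \eqref{eq:logorder}, \eqref{eq:dlogorder}, which all depend only on $\log(1/\root n\of{|a_n|})$. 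So the first step is to compare $\log(1/\root n\of{|a_n|})$ for the two sequences $a_n=1/\sqrt{s_{2n}}$ and $a_n=c_n$: from the displayed inequality,
$$
0\le \log\frac{1}{\root n\of{c_n}}-\log\frac{1}{\root n\of{1/\sqrt{s_{2n}}}}=\frac{1}{n}\log\sqrt{c_n^2s_{2n}}=\frac{1}{2n}\log(c_n^2s_{2n}),
$$
and the hypothesis is precisely that $\log\root{2n}\of{c_n^2s_{2n}}=\frac1{2n}\log(c_n^2s_{2n})=o(\log n)$. (The stronger hypothesis $c_n^2s_{2n}=O(K^n)$ gives $\frac1{2n}\log(c_n^2s_{2n})=O(1)$, which is $o(\log n)$, so it suffices to treat the weaker one.)

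Next I would plug this into the three $\limsup$ formulas. For the order \eqref{eq:deforder}: write $x_n=\log(1/\root n\of{1/\sqrt{s_{2n}}})$ and $y_n=\log(1/\root n\of{c_n})$; we have shown $x_n\le y_n\le x_n+o(\log n)$. Since $\rho_\Phi=\rho>0$ (the case $\rho=0$ being covered by the logarithmic-order statement), the sequence $y_n$ must tend to $\infty$ along the subsequence realizing the $\limsup$, and in fact $\log n/y_n$ has finite positive $\limsup$; the perturbation $o(\log n)$ is negligible compared to $\log n$, hence $\limsup \log n/x_n=\limsup\log n/y_n$, i.e. $\rho_L=\rho_\Phi=\rho$. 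This already gives the first assertion, because $\rho_L\le\rho_H\le\rho_\Phi=\rho$ from Proposition~\ref{thm:newLif}(iii) squeezes $\rho_H=\rho$ as well. For the logarithmic order \eqref{eq:logorder}, replace $\log n/y_n$ by $\log n/\log\log(1/\root n\of{c_n})=\log n/\log y_n$; the point is that $y_n\le x_n+o(\log n)\le x_n(1+o(1))$ once $x_n\to\infty$ (which holds since $\w{\Phi}=\w{}$ is being computed and the order is $0$, forcing $y_n\to\infty$), so $\log y_n=\log x_n+o(1)$ and the $\limsup$ of $\log n/\log x_n$ equals that of $\log n/\log y_n$; hence $\w{L}=\w{\Phi}=\w{}$, and again the squeeze gives $\w{H}=\w{}$. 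The double logarithmic case \eqref{eq:dlogorder} is identical with one more logarithm: $\log\log y_n=\log\log x_n+o(1)$ once $x_n\to\infty$, so $\ww{L}=\ww{\Phi}=\ww{}$, and $\ww{H}=\ww{}$ by the squeeze.

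The only genuinely delicate point is the elementary asymptotic manipulation that an additive perturbation $o(\log n)$ of $x_n$ leaves each of the three $\limsup$ expressions unchanged; this requires knowing that $x_n\to\infty$ in the logarithmic and double-logarithmic cases (which follows from the relevant order being $0$ but the object still being transcendental, so that $\w{}$ resp.\ $\ww{}$ is finite and the $\limsup$ is over $\log n/\log x_n\to$ a positive value, which is impossible if $x_n$ stays bounded). I would record this as a short lemma: if $x_n,y_n>0$ with $x_n\to\infty$ and $y_n-x_n=o(\log n)$, then $y_n/x_n\to 1$, $\log y_n-\log x_n\to 0$, and $\log\log y_n-\log\log x_n\to 0$, whence the three $\limsup$ ratios agree. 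Everything else is a direct substitution into the formulas quoted from the excerpt together with the squeeze from Proposition~\ref{thm:newLif}. I expect no further obstacle; the main care is simply in handling the case distinctions ($\rho>0$ vs.\ $\rho=0$, etc.) so that in each regime the relevant sequence is known to diverge.
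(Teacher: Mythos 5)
Your strategy is the same as the paper's: compare the coefficient exponents of $L$ with those of $\Phi$ (whose order, logarithmic order and double logarithmic order agree with those of the moment problem by Theorems~\ref{thm:ord1}, \ref{thm:ord2} and \ref{thm:dord2}), note that the hypothesis says exactly that these exponents differ by $o(\log n)$, and insert this into the coefficient formulas \eqref{eq:deforder}, \eqref{eq:logorder}, \eqref{eq:dlogorder}, the reverse inequalities being Proposition~\ref{thm:newLif}. Two slips need repair. The first link of your chain $1/c_n\le 1/\sqrt{s_{2n}}\le b_{n,n}\le c_n$ is false (it would force $c_n\ge 1$, whereas $c_n\to 0$); what is true, and all you actually use, is $1/\sqrt{s_{2n}}\le b_{n,n}\le c_n$, so in your notation $y_n\le x_n$ and $x_n-y_n=\frac{1}{2n}\log(c_n^2s_{2n})=o(\log n)$ --- your inequality and sign are reversed, harmlessly, since only the size of the gap enters. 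More importantly, your auxiliary lemma is false as stated: $x_n\to\infty$ together with $y_n-x_n=o(\log n)$ does not give $y_n/x_n\to 1$ (take $x_n=\sqrt{\log n}$, $y_n=2\sqrt{\log n}$). What saves the argument is that the exponents dominate $\log n$ exactly where you need them to: if $\rho>0$ then $y_n\asymp\log n$ along the extremal subsequence, and if the order is $0$ then \eqref{eq:deforder} gives $y_n/\log n\to\infty$, so the $o(\log n)$ gap is $o(y_n)$ and the logarithms, hence also the iterated logarithms, differ by $o(1)$; this is the role played in the paper's proof by dividing $\log\root{2n}\of{s_{2n}}\le\varepsilon\log n+\log(1/\root{n}\of{c_n})$ by $\log n$ and taking $\liminf$, and, in the order-zero cases, by observing that the correction term $\log(1+\varepsilon\log n/\log(1/\root{n}\of{c_n}))$ tends to $0$. (Also, when $\rho=0$ the equality $\rho=\rho_L$ is immediate from Proposition~\ref{thm:newLif} alone, not from the logarithmic-order case.) With these repairs your argument is complete and coincides with the paper's.
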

 
\begin{proof} Given $\varepsilon >0$ we have for $n$ sufficiently large
\begin{equation}\label{eq:rho=rhoL}
\log\root{2n}\of{s_{2n}}\le \varepsilon\log n+\log\frac
{1}{\root{n}\of{c_n}}.
\end{equation}
Dividing by $\log n$ leads to
$$
\liminf_{n\to\infty}\frac{\log\root{2n}\of{s_{2n}}}{\log n}\le \varepsilon +
\liminf_{n\to\infty}\frac{\log\frac{1}{\root{n}\of{c_n}}}{\log n},
$$
so by \eqref{eq:deforder}
$$
\frac{1}{\rho_L}\le \varepsilon+\frac{1}{\rho},
$$
but this gives $\rho\le \rho_L$.

From \eqref{eq:rho=rhoL} we get
$$
\log\log\root{2n}\of{s_{2n}}\le \log\log\frac{1}{\root{n}\of{c_n}} +  \log\left(1+\frac{\varepsilon\log n}{\log\frac
{1}{\root{n}\of{c_n}}}\right).
$$
If $\rho=0$ the last term tends to 0, and dividing by $\log n$ we get as above $\w{}\le\w{L}$. Similarly, if $\w{}=0$ we find 
$\ww{}\le\ww{L}$.
\end{proof}

In the next results we shall use the function
\begin{equation}\label{eq:G}
G(z)=\sum_{n=0}^\infty \frac{z^n}{b_n^n},
\end{equation}
which is entire if $b_n\to\infty.$

\begin{lem}\label{thm:3term}
Suppose that the recurrence coefficients of \eqref{eq:3t} satisfy
\begin{enumerate}
\item[(i)] $a_n=O(b_n)$,
\item[(ii)] $(b_{n})$ is eventually increasing,
\item[(iii)] $b_n\to\infty$.
\end{enumerate}
Then there exist constants $A,C\ge 1$ such that
\begin{equation}\label{eq:s2nup}
\sqrt{s_{2n}}\le A(3C)^nb_0b_1\cdots b_{n-1}, n\ge 0.
\end{equation}
\end{lem}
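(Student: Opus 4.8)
The plan is to expand the monomial $x^n$ in the orthonormal basis $(P_k)$ and to control the $\ell^1$-norm of the coefficient vector, which by orthonormality dominates $\sqrt{s_{2n}}$.

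First I would write $x^n=\sum_{k=0}^n v_k^{(n)}P_k$ with $v_k^{(n)}=\int x^nP_k\,d\mu$; by orthonormality $s_{2n}=\int x^{2n}\,d\mu=\sum_{k=0}^n (v_k^{(n)})^2$, so
$$
\sqrt{s_{2n}}\le V_n:=\sum_{k=0}^n|v_k^{(n)}|.
$$
Multiplying the expansion by $x$ and inserting the three term recurrence \eqref{eq:3t} gives the recursion $v_j^{(n+1)}=b_{j-1}v_{j-1}^{(n)}+a_jv_j^{(n)}+b_jv_{j+1}^{(n)}$ with $v_{-1}^{(n)}=0$. By hypothesis (i) there is $M$ with $|a_n|\le Mb_n$ for every $n$; put $C=\max(1,M)$. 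Taking absolute values in the recursion, summing over $j$, and reindexing the two shifted sums, one obtains $V_{n+1}\le(2+M)\beta_nV_n\le 3C\,\beta_nV_n$, where $\beta_n=\max_{k\le n}b_k$.

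Now hypotheses (ii) and (iii) guarantee that $\beta_n=b_n$ for $n$ larger than some $n_0$, so $V_{n+1}\le 3C\,b_nV_n$ for $n\ge n_0$. Iterating from $n_0$ gives $V_n\le A_1(3C)^n b_0b_1\cdots b_{n-1}$ for $n\ge n_0$ with $A_1=V_{n_0}\bigl((3C)^{n_0}b_0\cdots b_{n_0-1}\bigr)^{-1}$, and enlarging the constant to cover the finitely many indices $n<n_0$ and to make it $\ge 1$ yields a single $A\ge 1$ with $\sqrt{s_{2n}}\le V_n\le A(3C)^n b_0\cdots b_{n-1}$ for all $n\ge 0$, as claimed.

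The only point requiring care is the passage from $\beta_n$ to $b_n$: one must use both that $(b_n)$ is eventually monotone and that $b_n\to\infty$ to see that eventually $b_n$ exceeds every earlier $b_k$, and then absorb the leftover factor $b_0\cdots b_{n_0-1}$, together with the finitely many initial ratios $V_n/((3C)^nb_0\cdots b_{n-1})$, into $A$. Everything else is a routine one-step estimate on the three-term recursion, and the choice $C=\max(1,M)$ is exactly what turns the coefficient $2+M$ from the three terms of \eqref{eq:3t} into the stated $3C$.
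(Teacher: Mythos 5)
Your proof is correct, and it takes a cleaner route than the paper's, though the underlying engine (iterating the three-term recurrence on the expansion of $x^n$) is the same. The paper never collects like terms: it expands $x^n=\sum_{k\in I_n}d_{n,k}P_{J_n(k)}$ over a ternary tree of at most $3^n$ branches, proves by an induction with a case split ($J_n(k)$ before or after the index where $(b_n)$ becomes increasing) that each individual product satisfies $|d_{n,k}|\le AC^nb_0\cdots b_{n-1}$, and then bounds $s_{2n}\le\bigl(\sum_k|d_{n,k}|\bigr)^2$, the factor $3^n$ coming from the cardinality of $I_n$. You instead work with the collected coefficients $v_k^{(n)}=\int x^nP_k\,d\mu$, use the exact Parseval identity $s_{2n}=\sum_k(v_k^{(n)})^2\le V_n^2$, and run a one-step $\ell^1$ estimate $V_{n+1}\le(2+M)\beta_nV_n$ on the transfer recursion, with the factor $3$ arising from the three terms of \eqref{eq:3t} and $\beta_n=\max_{k\le n}b_k$ replaced by $b_n$ for large $n$ exactly as in the paper (this is where (ii) and (iii) enter, and you use them correctly). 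Your version buys a shorter argument with no combinatorial bookkeeping and no induction on individual branch products; the paper's version makes the provenance of each coefficient as a product of $n$ recurrence entries explicit, which is what its case analysis is built on, but yields the same bound. All the small points (handling $b_{-1}$ via $v_{-1}^{(n)}=0$, absorbing the finitely many initial $n$ and the product $b_0\cdots b_{n_0-1}$ into $A$, ensuring $A,C\ge1$) are handled adequately in your sketch.
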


\begin{proof} Because of the assumption (i) there exists a constant $C \ge 1$ such that $|a_n|\le Cb_n$ for all $n\ge 0$.
By (ii) there exists $n_0\ge 1$ such that $b_{n-1}\le b_n$ for $n\ge n_0$ and by (iii) there exists $n_1\ge n_0$ such that
$b_{n_1}\ge\max(1,b_0,\ldots,b_{n_0-1})$, hence
\begin{equation}\label{eq:Bmax}
B:=\max(1,b_0,\ldots,b_{n_1-1})\le b_{n_1}.
\end{equation}
The three term recurrence relation \eqref{eq:3t} for $P_n$ applied successively leads to
\begin{eqnarray*}
x &=& a_0P_0+b_0P_1,\\
x^2 &=& x(a_0P_0+b_0P_1)=a_0(a_0P_0+b_0P_1)+b_0(b_0P_0+a_1P_1+b_1P_2),
\end{eqnarray*}  
and in general there exist  an index set $I_n$ with $|I_n|\le 3^n$, a mapping $J_n$ from $I_n$ to $\{0,1,\ldots, n\}$ and real coefficients
$d_{n,k},k\in I_n$ such that
\begin{equation}\label{eq:l}
x^n=\sum_{k\in I_n} d_{n,k}P_{J_n(k)}.
\end{equation} 

In the next step we get
$$
x^{n+1}=\sum_{k\in I_n} d_{n,k}\left(b_{J_n(k)-1}P_{J_n(k)-1} + a_{J_n(k)}P_{J_n(k)} + b_{J_n(k)} P_{J_n(k)+1}\right),
$$
which shows how each element $k\in I_n$ gives rise to two or three elements in $I_{n+1}$ depending on $J_n(k)=0$ or $J_n(k)>0$.

Each $d_{n,k}$ is a product of $n$ terms from $\{a_0,\ldots,a_{n-1},b_0,\ldots,b_{n-1}\}$, hence
$$
|d_{n,k}|\le C^n(\max(b_0,\ldots,b_{n-1}))^n.
$$
For $n\le n_1$ we have in particular $|d_{n,k}|\le (BC)^n\le B^{n_1}C^n$.

 We claim that in general 
\begin{equation}\label{eq:dnk}
|d_{n,k}|\le B^{n_1}C^n b_{n_1}\cdots b_{n-1},\quad k\in I_n,\;n\ge 1,
\end{equation}
which is already established for $n\le n_1$, where the empty product $b_{n_1}\cdots b_{n-1}$ is to be understood as 1.
Assume now that \eqref{eq:dnk} holds for some $n\ge n_1$. If $J_n(k)\ge n_1$ we have
\begin{eqnarray*}
|d_{n,k}|b_{J_n(k)-1} &\le& |d_{n,k}|b_{J_n(k)}\le |d_{n,k}|b_n\le B^{n_1}C^nb_{n_1}\cdots b_{n-1}b_n\\ 
 |d_{n,k}||a_{J_n(k)}| &\le& C|d_{n,k}|b_{J_n(k)}\le B^{n_1} C^{n+1}b_{n_1}\cdots b_{n-1}b_n,
\end{eqnarray*}
while if $J_n(k)\le n_1-1$
\begin{eqnarray*}
|d_{n,k}|b_{J_n(k)-1}, |d_{n,k}|b_{J_n(k)} &\le& |d_{n,k}|B \le  B^{n_1}C^nb_{n_1}\cdots b_{n-1}b_n\\ 
  |d_{n,k}||a_{J_n(k)}| &\le& C|d_{n,k}|b_{J_n(k)}\le  B^{n_1}C^{n+1}b_{n_1}\cdots b_{n-1}b_n,
\end{eqnarray*}
where we have used that $B\le b_{n_1}\le b_n$. This finishes the induction proof of \eqref{eq:dnk},
which may be written 
$$
|d_{n,k}|\le AC^nb_0b_1\cdots b_{n-1},\quad k\in I_n\; n\ge 1,
$$
where $A=B^{n_1}/(b_0b_1\cdots b_{n_1-1})$.

Now \eqref{eq:s2nup} follows because 
\begin{eqnarray*}
s_{2n} &=&\int x^{2n}\,d\mu(x) =\sum_{k\in I_n}\sum_{l\in I_n}
d_{n,k}d_{n,l} \int P_{J_n(k)}P_{J_n(l)}\,d\mu(x)\\
&\le& \sum_{k\in I_n}\sum_{l\in I_n}
|d_{n,k}|\,|d_{n,l} |=\left (\sum_{k\in I_n}|d_{n,k}|\right)^2\le \left(3^nAC^{n}b_0b_1\cdots b_{n-1}\right)^2.
\end{eqnarray*}
\end{proof}

\begin{prop}\label{thm:3term1} Let $(s_n)$ denote an indeterminate moment sequence for which the recurrence
 coefficients \eqref{eq:3t} satisfy the conditions of Lemma~\ref{thm:3term}.
Then 
\begin{enumerate}
\item[(i)] $\rho_G \le \rho_L=\rho_H$.
\item[(ii)] $\w{G} \le \w{L}=\w{H}$, provided $\rho_H=0$.
\item[(iii)] $\ww{G} \le \ww{L}=\ww{H}$, provided $\w{H}=0$.
\end{enumerate}
\end{prop}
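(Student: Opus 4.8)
The strategy is to compare the three entire functions $G$, $L$ and $H$ coefficient-wise. All three have non-negative Taylor coefficients, so $M_f(r)=f(r)$ for $r\ge 0$ (as already used in the proof of Proposition~\ref{thm:newLif}), and coefficient inequalities translate directly into inequalities between maximum moduli, hence between orders and (double) logarithmic orders.

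First I would sandwich the coefficients of $H$. By Proposition~\ref{thm:newLif}(i) one has $b_{n,n}\ge 1/\sqrt{s_{2n}}$, while Lemma~\ref{thm:3term}, together with \eqref{eq:lead} in the form $1/b_{n,n}=b_0b_1\cdots b_{n-1}$, gives the opposite estimate $b_{n,n}\le A(3C)^n/\sqrt{s_{2n}}$. Multiplying by $r^n$ and summing yields
$$
M_L(r)\le M_H(r)\le A\,M_L(3Cr),\qquad r\ge 0.
$$
The left-hand inequality gives at once $\rho_L\le\rho_H$, and by the same monotonicity $\w{L}\le\w{H}$ and $\ww{L}\le\ww{H}$. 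For the reverse inequalities I would use the elementary fact that none of $\rho_f,\w{f},\ww{f}$ is affected by replacing $M_f(r)$ with $A\,M_f(Br)$ for positive constants $A,B$; this follows from the limsup formulas of Section~\ref{sec-pre}, since $\log\log(Br)=\log\log r+o(1)$ and similarly for iterated logarithms. Hence $\rho_H\le\rho_L$, and, under the respective hypotheses $\rho_H=0$ and $\w{H}=0$, also $\w{H}\le\w{L}$ and $\ww{H}\le\ww{L}$. This gives the equalities $\rho_L=\rho_H$, $\w{L}=\w{H}$, $\ww{L}=\ww{H}$.

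Next I would incorporate $G$ by comparing its coefficient $1/b_n^n$ with $b_{n,n}=1/(b_0\cdots b_{n-1})$. Since $(b_n)$ is eventually increasing and tends to $\infty$, there is $n_0$ with $b_{k-1}\le b_k$ and $b_k\ge 1$ for $k\ge n_0$, so for all large $n$
$$
b_0b_1\cdots b_{n-1}=\Big(\prod_{k<n_0}b_k\Big)\prod_{k=n_0}^{n-1}b_k\le B'\,b_{n-1}^{\,n-n_0}\le B'\,b_n^{\,n},\qquad B':=\prod_{k<n_0}b_k.
$$
Enlarging the constant to cover the finitely many small indices, we get $1/b_n^n\le B''\,b_{n,n}$ for all $n$, hence $M_G(r)\le B''\,M_H(r)$ for $r\ge 0$. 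As inserting a multiplicative constant in front of $M_H$ affects none of the three growth invariants, this yields $\rho_G\le\rho_H$ and, under the stated hypotheses, $\w{G}\le\w{H}$ and $\ww{G}\le\ww{H}$. Together with the equalities above this proves (i), (ii) and (iii).

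The proof contains no deep step: the essential content is already in Lemma~\ref{thm:3term}, and what remains is routine, the only mildly delicate point being the last coefficient estimate $b_0\cdots b_{n-1}\le B'b_n^n$, where one splits off the bounded initial block from the eventually monotone tail.
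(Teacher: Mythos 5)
Your argument is correct, and for the comparison of $L$ and $H$ it is in substance identical to the paper's: both directions come from Proposition~\ref{thm:newLif} (giving $M_L\le M_H$) and from \eqref{eq:s2nup} with \eqref{eq:lead} (giving $M_H(r)\le A\,M_L(3Cr)$), together with the harmless invariance of $\rho_f,\w{f},\ww{f}$ under passing from $M_f(r)$ to $A\,M_f(Br)$. Where you genuinely deviate is in how $G$ enters. The paper introduces the auxiliary function $G^*(z)=\sum z^n/b_n^{\,n-n_1}$, deduces $M_L(r)\ge (1/\gamma)M_{G^*}(r/3C)$ from the same estimate $b_0\cdots b_{n-1}\le B^{n_1}b_n^{\,n-n_1}$, and then identifies $\rho_{G^*}=\rho_G$ (and likewise for $\w{}$ and $\ww{}$) via the coefficient formulas \eqref{eq:deforder}, \eqref{eq:logorder}, \eqref{eq:dlogorder}. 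You instead prove the slightly stronger coefficient bound $b_0\cdots b_{n-1}\le B''\,b_n^{\,n}$ — legitimate here, since $(b_n)$ is eventually increasing and tends to infinity, so $b_n\ge 1$ eventually and the finitely many initial indices are absorbed into the constant — and compare $G$ directly with $H$ through $M_G\le B''M_H$. This avoids both the auxiliary $G^*$ and the coefficient formulas, and it is in fact the same device the authors use later in the proof of Theorem~\ref{thm:=inLif1} (there obtained from \eqref{eq:consBer} under the Berezanski\u{\i} hypotheses); the paper's variant keeps every comparison anchored at $L$ and needs only the monotone tail estimate with exponent $n-n_1$, at the cost of a short extra computation with the order formulas. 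Both routes deliver exactly the stated inequalities (i)--(iii).
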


\begin{proof}
From \eqref{eq:s2nup}, \eqref{eq:lead} and $b_{n-1}\le b_n$ for $n\ge n_1$,
it follows for such $n$ that
 $$
\sqrt{s_{2n}} \le \frac{A(3C)^n}{b_{n,n}} \le B^{n_1}(3C)^n b_n^{n-n_1},
$$
where $B$ is given by \eqref{eq:Bmax},
hence
 $$
\sqrt{s_{2n}} \le \frac{\alpha (3C)^n}{b_{n,n}} \le \gamma (3C)^n b_n^{n-n_1},\quad n\ge 0,
$$
for suitable constants $\alpha,\gamma>0$. Introducing
$$
G^*(z)=\sum_{n=0}^\infty \frac{1}{b_n^{n-n_1}}z^n,
$$
 this gives
 $$
M_L(r)\ge (1/\alpha) M_H(r/3C)\ge (1/\gamma)M_{G^*}(r/3C),\quad r>0,
$$
showing that
$\rho_L \ge \rho_H \ge \rho_G^*$ and similar inequalities for the logarithmic and double logarithmic orders.
If this is combined with Proposition~\ref{thm:newLif}, we get the equality sign between the orders of $L$ and $H$.
Furthermore, by \eqref{eq:deforder}
$$
\rho_{G^*}=\limsup\frac{\log n}{(1-n_1/n)\log b_n}=\limsup\frac{\log n}{\log b_n}=\rho_G,
$$
and similarly $\w{G}=\w{G^*}$ and  $\ww{G}=\ww{G^*}$.
\end{proof}

\begin{thm}\label{thm:=inLif1} Given an (indeterminate) moment problem where
$$
\sum_{n=1}^\infty \frac{1+|a_n|}{\sqrt{b_nb_{n-1}}}<\infty,
$$
and where either \eqref{eq:logconvex} or \eqref{eq:logconcave} holds.

The following holds 
\begin{enumerate}
\item[(i)] $\rho=\rho_F=\rho_G=\rho_H=\rho_L=\mathcal E(b_n)$.

If $\rho=0$ then
\item[(ii)] $\w{}=\w{F}=\w{G}=\w{H}=\w{L}=\mathcal E(\log b_n)$.

If $\w{}=0$ then
\item[(iii)] $\ww{}=\ww{F}=\ww{G}=\ww{H}=\ww{L}=\mathcal E(\log\log b_n)$.
\end{enumerate}
\end{thm}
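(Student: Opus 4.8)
The plan is to assemble all the stated equalities from a chain of inequalities already supplied by the earlier results; the only genuine work is to check that the running hypotheses of those results are met and to identify the orders of the auxiliary function $G$ from \eqref{eq:G} with the three exponents of convergence.

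First I would verify that the Berezanski\u{\i} assumptions force conditions (i)--(iii) of Lemma~\ref{thm:3term}. Indeterminacy together with Carleman's theorem gives $\sum 1/b_n<\infty$, so $(b_n)$ is unbounded and \eqref{eq:conv} holds; Lemma~\ref{thm:inc} then makes $(b_n)$ eventually strictly increasing to $\infty$, which is (ii) and (iii). Condition (i) follows because \eqref{eq:finalber} forces $(1+|a_n|)/\sqrt{b_nb_{n-1}}\to 0$, hence $|a_n|=o(\sqrt{b_nb_{n-1}})=o(b_n)$ once $b_{n-1}\le b_n$. Thus Proposition~\ref{thm:3term1} is available, and so are Theorem~\ref{thm:beralpha1}, Theorem~\ref{thm:Berlogorder} and Theorem~\ref{thm:Berdlogorder}.

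Next I would compute the three orders of $G(z)=\sum z^n/b_n^n$. Its $n$-th coefficient is $1/b_n^n$, so $1/\root n\of{|a_n|}=b_n$, and formulas \eqref{eq:deforder}, \eqref{eq:logorder}, \eqref{eq:dlogorder} give $\rho_G=\limsup_n \log n/\log b_n$, and likewise $\w{G}=\limsup_n\log n/\log\log b_n$ when $\rho_G=0$, and $\ww{G}=\limsup_n\log n/\log\log\log b_n$ when $\w{G}=0$. On the other hand the sequences $(b_n)$, $(\log b_n)$, $(\log\log b_n)$ are all eventually increasing to $\infty$; for an eventually increasing sequence $x_n\to\infty$ the counting function $n_x(r)$ is, for large $r$, a step function with jumps at the $x_n$ and $n_x(x_n)=n+O(1)$, and since $\log n_x(r)/\log r$ decreases between consecutive jumps, Lemma~\ref{thm:expc=or} yields $\mathcal E(x_n)=\limsup_n\log n/\log x_n$. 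Applying this with $x_n=b_n,\log b_n,\log\log b_n$ and invoking Theorems~\ref{thm:beralpha1}, \ref{thm:Berlogorder}, \ref{thm:Berdlogorder} gives $\rho_G=\mathcal E(b_n)=\rho$, and, in the respective cases, $\w{G}=\mathcal E(\log b_n)=\w{}$ and $\ww{G}=\mathcal E(\log\log b_n)=\ww{}$.

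Finally I would close the chains. For (i): Proposition~\ref{thm:newLif}(iii) gives $\rho_L\le\rho_H\le\rho_\Phi=\rho$, Proposition~\ref{thm:3term1}(i) gives $\rho_G\le\rho_L=\rho_H$, and since $\rho_G=\rho$ these collapse to $\rho=\rho_G=\rho_H=\rho_L$; then $\rho_F=\rho_L$ by comparing the two power series through \eqref{eq:deforder}, using $\log(2n)\sim\log n$, as already remarked in Section~\ref{sec-lif}. For (ii) the hypothesis $\rho=0$ gives $\rho_H\le\rho=0$, so Proposition~\ref{thm:newLif}(iv) and Proposition~\ref{thm:3term1}(ii) apply and, together with $\w{G}=\w{}$, yield $\w{}=\w{G}=\w{H}=\w{L}$, with $\w{F}=\w{L}$ again from \eqref{eq:logorder}; for (iii) the hypothesis $\w{}=0$ gives $\w{H}=0$ and one argues identically with Proposition~\ref{thm:newLif}(v), Proposition~\ref{thm:3term1}(iii), \eqref{eq:dlogorder} and $\ww{G}=\ww{}$. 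The main (indeed essentially the only) point requiring care is the elementary identity $\mathcal E(x_n)=\limsup_n\log n/\log x_n$ for eventually increasing $x_n\to\infty$; once that is in hand the rest is bookkeeping with already-proven results, so I anticipate no serious obstacle.
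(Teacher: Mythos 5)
Your proposal is correct and takes essentially the same route as the paper: verify the hypotheses of Lemma~\ref{thm:3term} from the Berezanski\u{\i} assumptions, then close the chains using Proposition~\ref{thm:newLif}, Proposition~\ref{thm:3term1}, the coefficient formulas \eqref{eq:deforder}, \eqref{eq:logorder}, \eqref{eq:dlogorder} for $G$, and Theorems~\ref{thm:beralpha1}, \ref{thm:Berlogorder}, \ref{thm:Berdlogorder}. The only (harmless) deviation is that you prove the full identities $\rho_G=\mathcal E(b_n)$, $\w{G}=\mathcal E(\log b_n)$, $\ww{G}=\mathcal E(\log\log b_n)$ via eventual monotonicity of $(b_n)$ and Lemma~\ref{thm:expc=or}, whereas the paper only needs the one-sided inequalities $\mathcal E(b_n)\le\rho_G$ (and analogues) obtained from a summability estimate, the chain then closing in the same way.
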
   

\begin{proof}
By Lemma~\ref{thm:inc} we know that $b_{n-1}\le b_n$ for $n\ge n_1$, so the conditions of Proposition~\ref{thm:3term1} are fulfilled.
By \eqref{eq:consBer} we have
$$
\frac{1}{b_n^{2n}} \le \frac{1}{b_{n-1}^{2n}}\le Ab_{n,n}^2,\quad n\ge n_1
$$
for a certain constant $A$, and by replacing $A$ by a larger constant
if necessary, we see that there exists a constant $a$ such that $1/b_n^n\le a b_{n,n}$ for all
$n$. This gives $M_G(r)\le a M_H(r)$, hence $\rho_G\le\rho_H$. By
\eqref{eq:deforder} we have
$$
\rho_G=\limsup_{n\to\infty}\frac{\log n}{\log b_n},
$$
so for any $\varepsilon>0$ we get $n\le b_n^{\rho_G+\varepsilon}$ for
$n$ sufficiently large. This gives
$$
\sum_{n=0}^\infty
\frac{1}{b_n^{(\rho_G+\varepsilon)(1+\varepsilon)}}<\infty,
$$
hence $\mathcal E(b_n)\le \rho_G$. Finally, by
Theorem~\ref{thm:beralpha1}, Proposition~\ref{thm:newLif} and Proposition~\ref{thm:3term1} we get  $\rho=\mathcal E(b_n)\le\rho_G\le\rho_H=\rho_L\le\rho$.

If the common order $\rho=0$, we get as above
$\w{G}\le\w{H}$, and by \eqref{eq:logorder}
we know that
$$
\w{G}=\limsup_{n\to\infty} \frac{\log n}{\log\log b_n}.
$$
For given $\varepsilon>0$ we get for $n$ sufficiently large that
$$
n\le (\log b_n)^{\w{G}+\varepsilon},
$$
showing that $\mathcal E(\log b_n)\le \w{G}$. We finally use
Theorem~\ref{thm:Berlogorder} combined with Proposition~\ref{thm:newLif} and Proposition~\ref{thm:3term1} to get (ii), and proceed similarly concerning the double logarithmic order.
\end{proof}

\begin{ex}\label{thm:chenismail} {\rm In \cite{C:I} symmetric polynomials with the recurrence coefficients
$b_{n-1}=2n\sqrt{4n^2-1},n\ge 1$, are considered. The sequence is log-concave and the order of the moment problem is $1/2$ by Theorem~\ref{thm:beralpha1}.

The case of $b_{n-1}=q^{-n}$ for $0<q<1$ is also considered, and Chen and Ismail  find explicit representations of $P_n$ and the entire functions $A,B,C,D$.
Clearly $b_n^2=b_{n-1}b_{n+1}$ and we find that the order is 0 and the logarithmic order is 1 in accordance with the estimates of the paper.}  
\end{ex}

\section{Appendix}\label{sec-app}

{\it Proof of Theorem~\ref{thm:dlogcoef}}. To establish  \eqref{eq:dlogorder}, we first show that if
$$
M_f(r)\le r^{(\log\log r)^\alpha} ,\quad \alpha>0,r\ge r_0,
$$
then
\begin{equation}\label{eq:first}
 \limsup_{n\to \infty}\frac{\log n}{\log\log \log\left(
    \frac{1}{\sqrt[n]{|a_n|}}\right)}\le \alpha.
\end{equation}
This will yield $\ge$ in \eqref{eq:dlogorder}.

By the Cauchy estimates
$$
|a_n|\le \frac{M_f(r)}{r^n}\le r^{(\log\log r)^\alpha-n},\quad r\ge
r_0.
$$
In this inequality we will choose an $r$ approximately minimizing 
$$
\varphi(r)=((\log\log r)^\alpha-n)\log r.
$$
Note that $\varphi'(r)=0$ if $x=\log\log r$ satisfies
\begin{equation}\label{eq:alphahelp}
x^\alpha+\alpha x^{\alpha-1}-n=0.
\end{equation}
Motivated by Lemma~\ref{thm:alphahelp} below 
we choose $r$ such that $\log\log r=n^{1/\alpha}-1$. This is certainly larger than $r_0$ if $n$ is large
enough. Inserting this value for $r$, we get
$$
\log|a_n|\le ((n^{1/\alpha}-1)^\alpha
-n)\exp(n^{1/\alpha}-1)=-n\left(1-(1-n^{-1/\alpha})^\alpha\right)
\exp(n^{1/\alpha}-1),
$$
hence
$$
\log\log \frac{1}{\sqrt[n]{|a_n|}} \ge n^{1/\alpha}-1+\log\left(1-(1-n^{-1/\alpha})^\alpha\right)
=n^{1/\alpha}(1+o(1)),
$$
showing \eqref{eq:first}.

We next show that the double logarithmic order of $f$ satisfies

\begin{eqnarray}\label{eq:dlogorder1}
\ww{f} 
\le \limsup_{n\to \infty}\frac{\log n}{\log\log \log\left(
    \frac{1}{\sqrt[n]{|a_n|}}\right)}.
\end{eqnarray}

This is clear if the right-hand side is infinity. Let $\mu$ be an arbitrary number larger than the right-hand side, now assumed finite.
Then there exists $n_0$ such that
$$
\log n\le \mu\log\log\log  \frac{1}{\sqrt[n]{|a_n|}},\quad n\ge n_0,
$$
or 
$$
|a_n|\le \exp\left(-n\exp(n^{1/\mu})\right),\quad n\ge n_0.
$$
Fix $r>e$ so large that $\log r> \exp(n_0^{1/\mu})-1$. We next determine $n_1>n_0$ so that
$$
 \exp((n_1-1)^{1/\mu})-1 < \log r \le \exp(n_1^{1/\mu})-1.
$$
For this $r$ we find with $C_1=\sum_{n=0}^{n_0-1}|a_n|$
 \begin{eqnarray*}
M_f(r) &\le & \sum_{n=0}^{n_0-1} |a_n|r^n + \sum_{n=n_0}^{\infty} |a_n|r^n\\
& \le & C_1 r^{n_0} + \sum_{n=n_0}^{\infty}\exp\left(-n\exp(n^{1/\mu})+n\log r\right)\\
&\le & C_1 r^{n_0} + \sum_{n=n_0}^{n_1-1}\exp\left(-n\exp(n^{1/\mu})+(\log(1+\log r))^\mu\log r\right)\\
& + &  
\sum_{n=n_1}^{\infty}\exp\left(-n\exp(n^{1/\mu})+n\exp(n^{1/\mu})-n\right),
\end{eqnarray*}
where we have used in the second sum that for $n_0\le n<n_1$:
$\exp(n^{1/\mu})-1< \log r$, hence $n< (\log(1+\log r))^\mu$, and in the last sum that for $n\ge n_1$
$$
\log r\le \exp(n_1^{1/\mu})-1\le \exp(n^{1/\mu})-1.
$$
We then get
 \begin{eqnarray*}
M_f(r) &\le &  C_1 r^{n_0} + r^{(\log(1+\log r))^{\mu}}\sum_{n=n_0}^{n_1-1}\exp\left(-n\exp(n^{1/\mu})\right)+\sum_{n=n_1}^\infty\exp(-n)\\
 &< & C_1 r^{n_0} + r^{(\log(1+\log r))^{\mu}}+1,
\end{eqnarray*}
where we have majorized the two sums by $\sum_1^\infty\exp(-n)=1/(e-1)<1$.
For any given $\varepsilon>0$ we have
$$
(\log(1+\log r))^{\mu} \le_{\text{\tiny as}} (\log\log r)^{\mu+\varepsilon},
$$
hence
$$
M_f(r)\le_{\text{\tiny as}} 2 r^{(\log\log r)^{\mu+\varepsilon}}
\le_{\text{\tiny as}} r^{(\log\log r)^{\mu+2\varepsilon}}.
$$
This  establishes $\ww{f}\le \mu + 2\varepsilon$, which shows  $\le$ in \eqref{eq:dlogorder}. 

We next prove \eqref{eq:dlogtype}. For simplicity of notation we put $\alpha=\ww{f}$ and assume that $0<\alpha<\infty$. We show first that if
$$
M_f(r)\le r^{K(\log\log r)^\alpha} ,\quad K>0,\;r\ge r_0,
$$
then
\begin{equation}\label{eq:second}
 \limsup_{n\to \infty}\frac{n}{\left(\log \log
    \frac{1}{\sqrt[n]{|a_n|}}\right)^\alpha}\le K,
\end{equation}
which establishes $\ge$ in \eqref{eq:dlogtype}.

By the Cauchy estimates
$$
|a_n|\le \frac{M_f(r)}{r^n}\le r^{K(\log\log r)^\alpha-n},\quad r\ge
r_0,
$$
hence
$$
\log|a_n|\le \left(K(\log\log r)^\alpha-n\right)\log r,\quad r\ge r_0.
$$

In this inequality we will choose  $\log\log r=(n/K)^{1/\alpha}-1$ by inspiration from the proof in the first part. This gives
$$
\log|a_n|\le -n\left(1-[1-(n/K)^{-1/\alpha}]^\alpha\right)
\exp((n/K)^{1/\alpha}-1),
$$
hence
$$
\log\log \frac{1}{\sqrt[n]{|a_n|}} \ge (n/K)^{1/\alpha}-1+\log\left(1-[1-(n/K)^{-1/\alpha}]^\alpha\right)
=(n/K)^{1/\alpha}(1+o(1)),
$$
showing \eqref{eq:second}.

We next show that the double logarithmic type of $f$ satisfies

\begin{eqnarray}\label{eq:dlogtype1}
\dd{f} 
\le \limsup_{n\to \infty}\frac{n}{\left(\log \log
    \frac{1}{\sqrt[n]{|a_n|}}\right)^\alpha}.
\end{eqnarray}

This is clear if the right-hand side is infinity. Let $\mu$ be an arbitrary number larger than the right-hand side, now assumed finite.
Then there exists $n_0$ such that
$$
 n\le \mu\left(\log\log  \frac{1}{\sqrt[n]{|a_n|}}\right)^\alpha,\quad n\ge n_0,
$$
or
$$
|a_n|\le \exp\left(-n\exp((n/\mu)^{1/\alpha})\right),\quad n\ge n_0.
$$
Fix $r>e$ so large that $\log r> \exp((n_0/\mu)^{1/\alpha})-1$. We next determine $n_1>n_0$ so that
$$
 \exp\left(\left(\frac{n_1-1}{\mu}\right)^{1/\alpha}\right)-1 < \log r \le \exp((n_1/\mu)^{1/\alpha})-1.
$$
For this $r$ we find with $C_1=\sum_{n=0}^{n_0-1}|a_n|$
 \begin{eqnarray*}
M_f(r) & \le & C_1 r^{n_0} + \sum_{n=n_0}^{\infty}\exp\left(-n\exp((n/\mu)^{1/\alpha})+n\log r\right)\\
&\le & C_1 r^{n_0} + \sum_{n=n_0}^{n_1-1}\exp\left(-n\exp((n/\mu)^{1/\alpha})+\mu(\log(1+\log r))^\alpha\log r\right)\\
& + &  
\sum_{n=n_1}^{\infty}\exp(-n),
\end{eqnarray*}
where we have used that $n<\mu\left(\log(1+\log r)\right)^{\alpha}$ when $n_0\le n\le n_1-1$, and that $\log r\le \exp\left((n/\mu)^{1/\alpha}\right)-1$ when
$n\ge n_1$.

We then get
 \begin{eqnarray*}
M_f(r) &\le &  C_1 r^{n_0} + r^{\mu(\log(1+\log r))^{\alpha}}\sum_{n=n_0}^{n_1-1}\exp\left(-n\exp((n/\mu)^{1/\alpha}\right)+\sum_{n=n_1}^\infty\exp(-n)\\
 &< & C_1 r^{n_0} + r^{\mu(\log(1+\log r))^{\alpha}}+1.
\end{eqnarray*}

For any given $\varepsilon>0$ we have
$$
\mu(\log(1+\log r))^{\alpha}\le_{\text{\tiny as}} (\mu+\varepsilon)(\log\log r)^{\alpha},
$$
hence
$$
M_f(r))\le_{\text{\tiny as}} 2 r^{(\mu+\varepsilon)(\log\log r)^{\alpha}}
\le_{\text{\tiny as}} r^{(\mu+2\varepsilon)(\log\log r)^{\alpha}}.
$$
This  establishes $\dd{f}\le \mu + 2\varepsilon$, which shows $\le$ in \eqref{eq:dlogtype}. 

\hfill $\square$

\begin{lem}\label{thm:alphahelp} Let $n\in \mathbb N, n\ge 4$ and $\alpha>0$. Then
  the function in \eqref{eq:alphahelp}
$$
h(x)=x^\alpha+\alpha x^{\alpha-1}-n
$$
has a zero in 
$$
\begin{cases}
  \displaystyle [n^{1/\alpha}-1,n^{1/\alpha}]  & \text{if \;$\alpha >1$},\\
\displaystyle n-1 & \text{if \;$\alpha=1$},\\
\displaystyle [n^{1/\alpha}-2,n^{1/\alpha}-1] & \text{if \;$0<\alpha<1$}.
\end{cases}
$$
\end{lem}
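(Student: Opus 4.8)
The plan is to split into the three cases, with essentially all the work concentrated in one mean value estimate. The case $\alpha=1$ is trivial: $h(x)=x+1-n$, which vanishes at $x=n-1$.

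Assume now $\alpha\ne 1$. First I would record that $h$ is continuous on $(0,\infty)$ with $h'(x)=\alpha x^{\alpha-2}(x+\alpha-1)$, so $h$ is strictly increasing on $(\max(0,1-\alpha),\infty)$. Using $n\ge 4$ one checks that all the intervals in the statement lie in this range: for $0<\alpha<1$ one has $n^{1/\alpha}\ge 4^{1/\alpha}>4$, so $n^{1/\alpha}-2>2>1-\alpha$; for $\alpha>1$ the interval starts at $n^{1/\alpha}-1>0$. Hence it suffices to determine the signs of $h$ at the two endpoints of the relevant interval and apply the intermediate value theorem.

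The key computation: put $t=n^{1/\alpha}$, so $n=t^\alpha$. For an integer $k\ge 1$ with $t-k>0$, the mean value theorem furnishes $\xi\in(t-k,t)$ with $t^\alpha-(t-k)^\alpha=\alpha k\,\xi^{\alpha-1}$, and therefore
$$
h(t-k)=(t-k)^\alpha+\alpha(t-k)^{\alpha-1}-t^\alpha=\alpha\bigl((t-k)^{\alpha-1}-k\,\xi^{\alpha-1}\bigr).
$$
If $\alpha>1$, the function $s\mapsto s^{\alpha-1}$ is increasing, so taking $k=1$ gives $\xi^{\alpha-1}>(t-1)^{\alpha-1}$ and hence $h(n^{1/\alpha}-1)<0$; on the other hand $h(n^{1/\alpha})=\alpha n^{(\alpha-1)/\alpha}>0$, and the intermediate value theorem gives a zero in $[n^{1/\alpha}-1,n^{1/\alpha}]$. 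If $0<\alpha<1$, then $s\mapsto s^{\alpha-1}$ is decreasing: with $k=1$ we get $\xi^{\alpha-1}<(t-1)^{\alpha-1}$, so $h(n^{1/\alpha}-1)>0$; with $k=2$ we get $\xi^{\alpha-1}>t^{\alpha-1}$, so $h(n^{1/\alpha}-2)<\alpha\bigl((t-2)^{\alpha-1}-2t^{\alpha-1}\bigr)$.

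The only delicate point, and the main obstacle (though a minor one), is to see that this last bracket is $\le 0$, i.e. $(t-2)^{\alpha-1}\le 2t^{\alpha-1}$, equivalently $\bigl(t/(t-2)\bigr)^{1-\alpha}\le 2$. Here I would invoke $n\ge 4$ and $1/\alpha>1$ to get $t=n^{1/\alpha}\ge 4^{1/\alpha}>4$, whence $t/(t-2)<2$; since $0<1-\alpha<1$ this yields $\bigl(t/(t-2)\bigr)^{1-\alpha}<2$ as needed. Thus $h(n^{1/\alpha}-2)<0$, and combined with $h(n^{1/\alpha}-1)>0$ the intermediate value theorem produces a zero in $[n^{1/\alpha}-2,n^{1/\alpha}-1]$, completing the case $0<\alpha<1$. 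Apart from this one inequality the argument is entirely routine, and this is precisely where the hypothesis $n\ge 4$ is used.
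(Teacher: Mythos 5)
Your proof is correct and follows essentially the same route as the paper: the sign $h(n^{1/\alpha})>0$, the mean value theorem applied to $s\mapsto s^\alpha$ over intervals of length $1$ and $2$ to get the signs at $n^{1/\alpha}-1$ and $n^{1/\alpha}-2$, and the hypothesis $n\ge 4$ entering exactly at the comparison $(t-2)^{\alpha-1}\le 2t^{\alpha-1}$ (the paper phrases this as $(y+\eta)^{\alpha-1}>(2y)^{\alpha-1}$ for $y\ge 2$, a cosmetic variant of your estimate). The added remarks on monotonicity of $h$ and the domain of the intervals are harmless extras not needed for the conclusion.
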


\begin{proof} We find $h(n^{1/\alpha})=\alpha n^{1-1/\alpha}>0$ for
  all $\alpha>0$. Putting $y=n^{1/\alpha}-1$ we find for some $\xi\in (0,1)$
$$
(y+1)^\alpha-y^\alpha= \alpha(y+\xi)^{\alpha-1}\;\begin{cases}
  \displaystyle > \alpha y^{\alpha-1} & \text{if \; $\alpha>1$},\\
\displaystyle < \alpha y^{\alpha-1} & \text{if \; $0<\alpha <1$}.
\end{cases}
$$
This shows that $h(n^{1/\alpha}-1) < 0$ (resp. $>0$) for $\alpha>1$
(resp. $0<\alpha<1$).

Finally, for $0<\alpha<1$ we put $y=n^{1/\alpha}-2$ and get for some $0<\eta<2$
$$
(y+2)^{\alpha}-y^\alpha=2\alpha (y+\eta)^{\alpha-1}>\alpha
y^{\alpha-1}
$$
if $y\ge 2$. This shows that
 $h(n^{1/\alpha}-2)<0$. Note that $y=n^{1/\alpha}-2\ge 2$ for $n\ge 4$.
\end{proof}

Propositions 5.3 and 5.4 from \cite{B:P:H} can be extended to double logarithmic order.

These results deal with transcendental entire functions $f$ of ordinary order strictly less than $1$. They have
infinitely many zeros, which we label $\{ z_n\}$ and number according
to increasing order of magnitude. We repeat each zero according to
its multiplicity. Supposing $f(0)=1$ we get from Hadamard's
factorization theorem 
\begin{equation}\label{eq:canonical}
f(z)=\prod_{n=1}^{\infty}\left( 1 - \frac{z}{z_n}\right) .
\end{equation}

The growth of $f$ is
thus determined by the distribution of the zeros. We shall use the following
quantities to describe this distribution.

The usual zero
counting function $n(r)$ is 
$$
n(r)= \# \{ n \, | \, |z_n|\leq r\},
$$
and we define  
$$
N(r)=\int_0^r\frac{n(t)}{t}\, dt,
$$
and 
$$
Q(r)=r\int_r^{\infty}\frac{n(t)}{t^2}\, dt.
$$
These quantities are related to $M_f(r)$ in the following way
\begin{equation}
\label{eq:MNQ}
N(r)\leq \log M_f(r)\leq N(r)+Q(r)
\end{equation}
for $r>0$. (This is relation (3.5.4) in Boas \cite{BOAS}).

By a theorem of Borel it is known that $\rho_f=\mathcal E(z_n)$, and if the order is $0$, then $\rho_f^{[1]}=\mathcal E(\log|z_n|)$ by Proposition 5.4
in \cite{B:P:H}. Furthermore, by Proposition 5.3 in \cite{B:P:H} we have
$$
\mathcal E(\log|z_n|)=\limsup_{n\to\infty} \frac{\log n(r)}{\log\log r}.
$$

The following proposition expresses the double logarithmic convergence
exponent $\mathcal E(\log\log|z_n|)$ in terms of the zero counting
function of $f$.

\begin{prop}
\label{thm:dlc}
We have 
\begin{equation}\label{eq:Lhelp}
\mathcal E(\log\log |z_n|) = \limsup_{r\to \infty}\frac{\log n(r)}{\log \log \log r}.
\end{equation}
\end{prop}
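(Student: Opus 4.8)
The plan is to obtain \eqref{eq:Lhelp} from Lemma~\ref{thm:expc=or} by applying it to the sequence $w_n:=\log\log|z_n|$ and then performing a change of variables. Since $f$ is transcendental its zeros are isolated and $|z_n|\to\infty$, so only finitely many $z_n$ lie in the disc $|z|\le e$; fixing $n^*$ with $|z_n|>e$ for $n\ge n^*$, the numbers $w_n$ are well defined, positive, and tend to $+\infty$ for $n\ge n^*$. Thus $\mathcal E(\log\log|z_n|)=\mathcal E(w_n)$ is meaningful, and it is this quantity we must identify.

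Next I would compute the counting function of $(w_n)$. For $R>0$ and $n\ge n^*$ one has $w_n\le R$ precisely when $|z_n|\le e^{e^R}$, so, denoting by $n_w(R)=\#\{n\mid w_n\le R\}$ the counting function of $(w_n)$, one gets $n_w(R)=n(e^{e^R})+O(1)$ as $R\to\infty$, the bounded term accounting for the finitely many zeros of modulus at most $e$. Because $f$ has infinitely many zeros we have $n(e^{e^R})\to\infty$, hence $\log n_w(R)=\log n(e^{e^R})+o(1)$.

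Applying Lemma~\ref{thm:expc=or} to the sequence $(w_n)$ now yields
$$
\mathcal E(\log\log|z_n|)=\limsup_{R\to\infty}\frac{\log n_w(R)}{\log R}=\limsup_{R\to\infty}\frac{\log n(e^{e^R})}{\log R}.
$$
Finally I would substitute $r=e^{e^R}$, i.e. $R=\log\log r$ and $\log R=\log\log\log r$; this is an increasing bijection under which $R\to\infty$ corresponds to $r\to\infty$, so
$$
\mathcal E(\log\log|z_n|)=\limsup_{r\to\infty}\frac{\log n(r)}{\log\log\log r},
$$
which is \eqref{eq:Lhelp}.

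No genuine obstacle is expected: this is exactly the substitution that takes $\mathcal E(z_n)$ to $\mathcal E(\log|z_n|)$ in Proposition~5.3 of \cite{B:P:H}, carried out one level further. The only points deserving a word of care are that $\log\log|z_n|$ is undefined or non-positive for the finitely many zeros inside $|z|\le e$, which is irrelevant for a $\limsup$ at infinity, and that the $O(1)$ discrepancy between $n_w(R)$ and $n(e^{e^R})$ is absorbed after taking logarithms because the zero counting function tends to infinity.
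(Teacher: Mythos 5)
Your proof is correct and follows essentially the same route as the paper: apply Lemma~\ref{thm:expc=or} to the sequence $\log\log|z_n|$, identify its counting function with $n(e^{e^R})$, and change variables $r=e^{e^R}$. The extra care you take with the finitely many zeros of modulus at most $e$ is a harmless refinement the paper leaves implicit.
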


\begin{proof} We have
$$
n(e^{e^r})=\#\left\{n\, \mid |z_n|\le e^{e^r}\right\} =\#\{n\, \mid \log\log|z_n|\le r\},
$$
hence by Lemma~\ref{thm:expc=or} 
$$
\mathcal E(\log\log|z_n|)=\limsup_{r\to\infty}\frac{\log n(e^{e^r})}{\log r}=\limsup_{s\to\infty}\frac{\log n(s)}{\log\log\log s}.
$$
\end{proof}

\begin{thm}\label{thm:dlc1} The double logarithmic order of the canonical product \eqref{eq:canonical} is equal to the double logarithmic 
convergence exponent of the zeros, i.e., $\ww{f}=\mathcal E(\log\log |z_n|)$.
\end{thm}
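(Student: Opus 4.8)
The plan is to read off $\ww{f}=\mathcal E(\log\log|z_n|)$ from the two-sided bound \eqref{eq:MNQ}, namely $N(r)\le\log M_f(r)\le N(r)+Q(r)$, combined with the identity $\mathcal E(\log\log|z_n|)=\limsup_{r\to\infty}\log n(r)/\log\log\log r$ proved in Proposition~\ref{thm:dlc}. Write $\lambda$ for this common value, and recall that $n$ is non-decreasing.

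For the inequality $\lambda\le\ww{f}$, which is trivial when $\ww{f}=\infty$, I would assume $\ww{f}<\infty$ and fix $\varepsilon>0$, so that $\log M_f(r)\le_{\text{\tiny as}}(\log\log r)^{\ww{f}+\varepsilon}\log r$. From the left half of \eqref{eq:MNQ} and monotonicity of $n$,
\[
\log M_f(r)\ge N(r)=\int_0^r\frac{n(t)}{t}\,dt\ge n(\sqrt r)\int_{\sqrt r}^r\frac{dt}{t}=\tfrac12\,n(\sqrt r)\log r,
\]
whence $n(\sqrt r)\le_{\text{\tiny as}}2(\log\log r)^{\ww{f}+\varepsilon}$. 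Taking logarithms, dividing by $\log\log\log r$ and substituting $r=s^2$ (using $\log\log\log s^2\sim\log\log\log s$) gives $\limsup_{s\to\infty}\log n(s)/\log\log\log s\le\ww{f}+\varepsilon$; now let $\varepsilon\to0$.

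For the reverse inequality $\ww{f}\le\lambda$, which is trivial when $\lambda=\infty$, I would assume $\lambda<\infty$, fix $\varepsilon>0$ and set $c=\lambda+\varepsilon/2>0$. By Proposition~\ref{thm:dlc} we have $n(t)\le_{\text{\tiny as}}(\log\log t)^c$, so for $r$ large
\[
N(r)\le O(1)+(\log\log r)^c\int_{r_1}^r\frac{dt}{t}\le O(1)+(\log\log r)^c\log r.
\]
For the tail $Q(r)=r\int_r^\infty n(t)t^{-2}\,dt$ I would substitute $t=r^s$ to obtain $Q(r)\le r\log r\int_1^\infty(\log s+\log\log r)^c r^{-s}\,ds$; splitting $(\log s+\log\log r)^c$ into a constant multiple of $\log^c s+(\log\log r)^c$ and using $\log(1+u)\le u$, $\int_1^\infty r^{-s}\,ds=r^{-1}/\log r$ and $\int_0^\infty u^c r^{-u}\,du=\Gamma(c+1)/(\log r)^{c+1}$ yields $Q(r)=O\bigl((\log\log r)^c\bigr)=o(N(r))$. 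Feeding these estimates into \eqref{eq:MNQ} gives $\log M_f(r)\le_{\text{\tiny as}}2(\log\log r)^c\log r\le_{\text{\tiny as}}(\log\log r)^{\lambda+\varepsilon}\log r$, i.e. $\ww{f}\le\lambda+\varepsilon$; again let $\varepsilon\to0$.

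The one step demanding genuine care is the estimate of the tail $Q(r)$: one has to verify both that the extremely slowly growing factor $(\log\log t)^c$ does not spoil the convergence of $\int_r^\infty n(t)t^{-2}\,dt$ and, crucially, that $Q(r)$ stays of strictly smaller order than $N(r)$, so that replacing $\log M_f(r)$ by $N(r)$ costs nothing on the double-logarithmic scale. Everything else is routine bookkeeping with relations such as $\log\log\log r^2\sim\log\log\log r$.
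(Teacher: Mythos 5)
Your proof is correct and follows essentially the same route as the paper: both directions rest on the bound $N(r)\le\log M_f(r)\le N(r)+Q(r)$ together with Proposition~\ref{thm:dlc}, with the lower bound $N(r)\ge\tfrac12 n(\sqrt r)\log r$ being the same device as the paper's $n(r)\log r\le N(r^2)$. The only difference is cosmetic: for the tail you compute $Q(r)=O\bigl((\log\log r)^{c}\bigr)$ via the substitution $t=r^{s}$ and a Gamma integral, whereas the paper gets the same bound by pulling the increasing factor $t^{1/2}/(\log\log t)^{c}$ out of the integral.
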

  
\begin{proof} We shall prove that $L=\ww{f}$, where $L$ is given by the right-hand side of \eqref{eq:Lhelp}. 
Let $\alpha>0$ be such that
$$
M_f(r)\le r^{(\log\log r)^{\alpha}},\quad r\ge r_0.
$$
For $r\ge r_0$ we then get by the left-hand side of \eqref{eq:MNQ}
$$ 
n(r)\log r \le \int_r^{r^2}\frac{n(t)}t\,dt \le N(r^2) \le \log M_f(r^2) \le 2(\log\log r^2)^\alpha\log r,
$$
hence for any $\varepsilon >0$
$$
n(r) \le 2(\log 2 +\log\log r)^\alpha  \le_{\text{\tiny as}}(\log\log r)^{\alpha+\varepsilon},
$$
which shows that $L\le \alpha+\varepsilon$, leading to $L\le \ww{f}$.

To prove the converse inequality we let $\varepsilon>0$ be given. There exists $r_0>1$ such that
$$
n(r) \le (\log\log r)^{L+\varepsilon},\quad r\ge r_0.
$$
For $r>r_0$ we then get
$$
N(r) \le \int_0^{r_0}\frac{n(t)}t\,dt + \int_{r_0}^r(\log\log t)^{L+\varepsilon}\frac{dt}{t}< \int_0^{r_0}\frac{n(t)}t\,dt + (\log\log r)^{L+\varepsilon}\log r.
$$ 
We also get
$$
Q(r)\le r\int_r^{\infty}\frac{(\log\log t)^{L+\varepsilon}}{t^{1/2}}\,\frac{dt}{t^{3/2}}.
$$
We next use that 
$$
\frac{t^{1/2}}{(\log\log t)^{L+\varepsilon}}=\left[\frac{t}{(\log\log t)^{2(L+\varepsilon)}}\right]^{1/2}
$$
is increasing for $t$ sufficiently large, because $(\log\log r)^{\alpha}$ is an order function for any $\alpha>0$. We can therefore write
$$
Q(r) \le r \frac{(\log\log r)^{L+\varepsilon}}{r^{1/2}}\int_r^\infty\,\frac{dt}{t^{3/2}}=2(\log\log r)^{L+\varepsilon},
$$
so  by the right-hand side of \eqref{eq:MNQ} we find
$$
\limsup_{r\to\infty} \frac{\log M_f(r)}{(\log\log r)^{L+\varepsilon}\log r} \le 1,
$$
and it follows that $\ww{f}\le L$.
\end{proof}

\medskip
\noindent{\bf Acknowledgment.} The authors thank Henrik Laurberg Pedersen for valuable comments to the manuscript.

Christian Berg; email:berg@math.ku.dk\\
Department of Mathematics, University of Copenhagen,\\
Universitetsparken 5, DK-2100, Denmark

\medskip
Ryszard Szwarc; email szwarc2@gmail.com\\
Institute of Mathematics,
University of Wroc{\l}aw, pl.\ Grunwaldzki 2/4, 50-384 Wroc{\l}aw, Poland 
 
and

\noindent Institute of Mathematics and Computer Science, 
University of Opole, ul. Oleska 48,
45-052 Opole, Poland

\end{document}